\DeclareMathOperator{\4all}{\text{ for~all~}}
\DeclareMathOperator{\R}{\mathbb{R}}
\DeclareMathOperator{\Rno}{\mathbb{R}^n_{\geq 0}}
\DeclareMathOperator{\twon}{2^{[n]}}
\DeclareMathOperator{\bbeta}{\boldsymbol\beta}
\DeclareMathOperator{\balpha}{\boldsymbol\alpha}
\DeclareMathOperator{\latone}{\mathcal{X}}
\DeclareMathOperator{\leqone}{\preceq}
\DeclareMathOperator{\geqone}{\succeq}
\DeclareMathOperator{\joinone}{\curlyvee}
\DeclareMathOperator{\meetone}{\curlywedge}
\DeclareMathOperator{\lattwo}{\mathcal{Y}}
\DeclareMathOperator{\leqtwo}{\sqsubseteq}
\DeclareMathOperator{\jointwo}{\sqcup}
\DeclareMathOperator{\meettwo}{\sqcap}
\newcommand{\tr}[1]{\mathrm{tr}\left(#1\right)}
\newcommand{\subjectto}{\text{subject to}}
\newcommand{\diag}[1]{\mathrm{diag}\left(#1\right)}
\newcommand{\minimize}[1]{\underset{#1}{\text{minimize}}~}
\newcommand{\maximize}[1]{\underset{#1}{\text{maximize}}~}
\newcommand{\supp}[1]{\mathrm{supp}\left(#1\right)}
\newcommand{\argmin}[1]{\underset{#1}{\text{argmin}}~}
\newtheorem*{assumptions}{Assumptions}
\newtheorem{theorem}{Theorem}[section]
\newtheorem{lemma}[theorem]{Lemma}
\newtheorem{proposition}[theorem]{Proposition}
\newtheorem{corollary}[theorem]{Corollary}
\theoremstyle{definition}
\theoremstyle{remark}
\newtheorem{remark}[theorem]{Remark}
\numberwithin{equation}{section}
\DeclareMathOperator{\lleq}{\ll}
\DeclareMathOperator{\ljoin}{\rotatebox[origin=c]{90}{$\ll$}\hspace{2px}}
\DeclareMathOperator{\lmeet}{\rotatebox[origin=c]{270}{$\ll$}\hspace{2px}}
\DeclareMathOperator{\lsjoin}{\rotatebox[origin=c]{90}{$\Subset$}\hspace{2px}}
\DeclareMathOperator{\lsmeet}{\rotatebox[origin=c]{270}{$\Subset$}\hspace{2px}}
\begin{document}
\begin{abstract}
In model selection problems for machine learning, the desire for a well-performing model with meaningful structure is typically expressed through a regularized optimization problem.  In many scenarios, however, the meaningful structure is specified in some discrete space, leading to difficult nonconvex optimization problems.  In this paper, we connect the model selection problem with structure-promoting regularizers to submodular function minimization with continuous and discrete arguments.  In particular, we leverage the theory of submodular functions to identify a class of these problems that can be solved exactly and efficiently with an agnostic combination of discrete and continuous optimization routines.  We show how simple continuous or discrete constraints can also be handled for certain problem classes, and extend these ideas to a robust optimization framework.  We also show how some problems outside of this class can be embedded into the class, further extending the class of problems our framework can accommodate.  Finally, we numerically validate our theoretical results with several proof-of-concept examples with synthetic and real-world data, comparing against state-of-the-art algorithms.
\end{abstract}

\title[Continuous and discrete model selection]{Joint continuous and discrete model selection via submodularity}
\thanks{This work was supported in part by the US Army Research Laboratory
(ARL) under Cooperative Agreement W911NF-17-2-0196.}

\author[Bunton and Tabuada]{Jonathan Bunton and Paulo Tabuada}
\address{Department of Electrical Engineering\\
University of California at Los Angeles,
Los Angeles, CA 90095}
\email{tabuada@ee.ucla.edu}
\urladdr{http://www.ee.ucla.edu/$\sim$tabuada}
\email{j.bunton@ucla.edu}

\maketitle
\section{Introduction}
In many machine learning tasks, we require a model that not only performs a specified task well, but also has some meaningful structure.  Models with meaningful structure can, for example, be easier to understand and implement.  The desire for both accuracy and meaningful structure is usually expressed in a regularized optimization problem:
\begin{align}
\minimize{\mathbf{x}\in \mathcal{X}}f(\mathbf{x}) + \lambda g(\mathbf{x}). \label{eq:regularized_problem}
\end{align}
In this problem, $\mathbf{x}$ is a choice of model parameters from a parameter space $\mathcal{X}$, $f:\mathcal{X}\rightarrow\mathbb{R}$ is a function that describes the misfit of the model with the selected parameters to the given task (e.g., empirical risk), $g:\mathcal{X}\rightarrow\mathbb{R}$ is a function that expresses the deviation of our selected model parameters from some desired structure, and $\lambda\in\mathbb{R}_{\geq 0}$ is a tradeoff parameter.

Problem \eqref{eq:regularized_problem} becomes difficult when the desired model structure is an inherently discrete property, but the model parameters are continuous values $\mathbf{x}$ from a continuum $\mathcal{X}$.  A prime example of this issue arises in feature selection for sparse regression, where we seek a linear predictor $\mathbf{x}^*\in \mathcal{X}\subseteq\mathbb{R}^n$ such that:
\begin{align}
\mathbf{x}^*\in\argmin{\mathbf{x}\in \mathcal{X}} \Vert \mathbf{Ax}-\mathbf{b}\Vert_2^2 + \lambda \Vert \mathbf{x}\Vert_0,\label{eq:sparse_regression}
\end{align}
for some $\mathbf{A}\in\mathbb{R}^{m\times n}$ and $\mathbf{b}\in\mathbb{R}^m$, with $\Vert \mathbf{x} \Vert_2$ the standard Euclidean norm on $\mathbb{R}^m$, and $\Vert \mathbf{x}\Vert_0$ the $\ell_0$ pseudo-norm that counts the number of nonzero entries in the predictor $\mathbf{x}$.  The desired structure, in this case, is a sparse predictor $\mathbf{x}\in \mathcal{X}$. Sparsity, however, only depends on the combinatorial choice of zero entries in the model parameters $\mathbf{x}$, whereas the model also requires a choice of continuous values for $\mathbf{x}\in\mathcal{X}$.

Problems with this mixed dependence on both continuous and discrete properties of the model parameters such as \eqref{eq:sparse_regression} are notoriously difficult, and even NP-Hard in general \cite{rauhut2010compressive}.  A typical workaround is to replace the function describing model structure, $g$ in problem \eqref{eq:regularized_problem}, with a continuous relaxation that is more amenable to optimization.  One of the more celebrated instances of this approach is the relaxation of the $\ell_0$ pseudo-norm in \eqref{eq:sparse_regression} to the convex $\ell_1$ norm $\Vert \mathbf{x}\Vert_1$, which instead sums the absolute values of the vector $\mathbf{x}$.  While this relaxation still encourages the intended structure, the minimizer for the relaxed problem does not necessarily correspond to the minimizer for the initially specified problem \cite{bach2012structured}.  Moreover, the well-known conditions for sparse recovery in regression problems, such as Restricted Isometry Properties \cite{candes2005decoding}, Null Space Properties \cite{rauhut2010compressive}, and Irrepresentability Conditions \cite{zhao2006model}, are not applicable to more general discrete functions $g$.

In contrast, in this work we identify conditions that allow us to directly solve the originally posed regularized model-fitting problem \eqref{eq:regularized_problem} exactly and efficiently.  To derive our new conditions, we leverage submodularity, a property of functions that defines a boundary between easy and hard optimization problems.  Our approach stands in stark contrast to existing methods, which either focus on submodularity in purely one domain \cite{bach2019submodular} or relies on restricted isometry or strong convexity constants that are NP-Hard to compute \cite{elenberg2018restricted,eloptimal}.

Traditionally, submodularity is defined for functions on bounded discrete sets, where arbitrary function minimization is NP-Hard.  When a function is submodular, however, it can be minimized exactly in polynomial time \cite{schrijver2003combinatorial}.  The definition of submodularity extends to continuous functions as well, and recently the associated optimization guarantees have also been extended \cite{bach2019submodular,bian2017non}.  In particular, if a continuous function is submodular, it can also be minimized exactly in polynomial time.

The natural next question--which is addressed in this work--to ask is if submodularity still defines a boundary between easy and hard \emph{mixed} optimization problems such as \eqref{eq:regularized_problem}, where the function $f$ in \eqref{eq:regularized_problem} is continuous, but the function $g$ has a discrete co-domain.  Our work explores this boundary and identifies sufficient conditions, based on the submodularity of both functions, under which the exact solution of problem \eqref{eq:regularized_problem} can be efficiently computed.

Exploiting submodularity in these mixed scenarios is not a new idea, given its utility in discrete optimization problems.  Notable uses include establishing approximation guarantees for greedy algorithms applied to sparsity-constrained optimization \cite{elenberg2018restricted}, or in producing tight convex relaxations for set-function descriptions of desired sparsity patterns \cite{bach2012structured}.

As highlighted above, \cite{bach2019submodular} shows that if a continuous function is submodular, it can be \emph{discretized} into a discrete submodular function, which can then be minimized exactly in polynomial time.  However, this discretization is only valid for compact subsets of continuous spaces and necessarily introduces discretization error into the produced solution. 

In a line of work similar to this one, authors in \cite{eloptimal} propose converting the mixed problem to a purely discrete one without discretizing.  They then advocate using a specific submodular set function minimization algorithm for solving the discrete problem, and give approximation guarantees under the assumption that the functions are nearly submodular.  Our proposed approach is similar, but our work instead focuses on finding conditions under which an \emph{arbitrary choice} (of potentially more efficient) algorithms produce \emph{exact} results, which leads to their choice as a special case.

The sufficient conditions we require may be violated in practice.  Traditionally, violations of submodularity are handled by suitably relaxing the definition with an additive or multiplicative constant and propagating the constant through a particular algorithm \cite{eloptimal,elenberg2018restricted}.  Alternatively, in this work we find a sub-class of optimization problems that we can always lift into problems that satisfy our assumptions.  Moreover, we prove that the solution of the lifted problem gives a near-optimal solution to the original.  Our lifting approach stands in stark contrast to existing methods, as it is algorithm-independent with a guarantee that is easy to compute rather than tied to a specific algorithm and dependent on constants that are NP-Hard to compute \cite{eloptimal,elenberg2018restricted}.

We make several technical contributions, namely:
\begin{enumerate}[label=(\roman*)]
\item We identify new sufficient conditions, based on submodularity, under which the regularized model selection problem \eqref{eq:regularized_problem} can be solved efficiently and exactly;
\item We extend this theory to accommodate simple continuous and discrete constraints on the model parameter for some problem classes;
\item We highlight the utility of exact solutions for robust optimization scenarios;
\item We show that problems violating our sufficient conditions can be lifted to problems that do satisfy them, and whose solutions correspond to optimal or near-optimal solutions of the original problem;
\item We numerically validate the correctness of our theory with examples from sparse regression and retail price optimization.
\end{enumerate}

\section{Submodular Functions on Lattices}
In this work, we consider optimization problems defined on two sets: an uncountably infinite set, typically $\mathbb{R}^n$ or a subset thereof referred to as a \emph{continuous set}, and a countable set, typically finite and referred to as a \emph{discrete set}.  Because we would like to efficiently solve optimization problems defined on both continuous and discrete sets, we study a  structure that can allow efficient optimization in both cases: submodularity.

Submodularity is typically defined as a property of set functions, which are functions that map any subset of a finite set $V$ to a real number, i.e., $f:2^V\rightarrow\mathbb{R}$.  More generally, however, submodularity is a property of functions on \emph{lattices} which can be continuous or discrete sets.

Let $\latone$ be a set equipped with a partial order of its elements, denoted by $\leqone$. For any two elements $\mathbf{x},\mathbf{x'}\in\latone$ we define their least upper bound, or \emph{join}, as:
\begin{align}
\mathbf{x}\joinone\mathbf{x'} &= \inf\{\mathbf{y}\in\latone~:~\mathbf{x}\leq \mathbf{y},~\mathbf{x'}\leq \mathbf{y}\}.\label{eq:join_def}
\end{align}
Dually, we define their greatest lower bound, or \emph{meet}, as:
\begin{align}
\mathbf{x}\meetone\mathbf{x'}&=\sup\left\{\mathbf{y}\in\latone~:~\mathbf{y}\leq\mathbf{x},~\mathbf{y}\leq\mathbf{x'}\right\}.\label{eq:meet_def}
\end{align}
If for any two elements $\mathbf{x},\mathbf{x'}\in\latone$, their join, $\mathbf{x}\joinone\mathbf{x'}$, and their meet, $\mathbf{x}\meetone\mathbf{x'}$, exist and are in $\latone$, then the set $\latone$ and its order define a \emph{lattice}.  We write the lattice and its partial order together as $(\latone,\leqone)$, but will often write just $\latone$ when the order is clear from context.  If a subset $\mathcal{S}\subseteq\latone$ is such that for any two of its elements $\mathbf{x},\mathbf{x'}\in \mathcal{S}$, both their join, $\mathbf{x}\joinone\mathbf{x'}$, and their meet, $\mathbf{x}\meetone\mathbf{x'}$, are in $\mathcal{S}$, the subset $\mathcal{S}$ is called a \emph{sublattice} of $\latone$ \cite{davey2002introduction}.

As an example, consider a finite set of elements $V$.  Then its power set, $2^V$ (the set of all its possible subsets), forms a lattice when ordered by set inclusion, $\subseteq$.  Under this order, the join of any two elements $X,X'\subseteq V$ is their set union, $X\cup X'\subseteq V$, and dually, their meet is their set intersection $X\cap X'\subseteq V$.

We can also endow continuous sets with partial orders that define lattices.  Recent work has brought attention to $\mathbb{R}^n$ equipped with the partial order $\leqone$, defined as:
\begin{align}
\mathbf{x}\leqone\mathbf{x'}\quad\Leftrightarrow\quad\mathbf{x}_i \leq \mathbf{x}_i'\quad\text{for all }i=1,2,...,n,\label{eq:rn_order}
\end{align}
where $\leq$ denotes the usual order on $\mathbb{R}$.

Under this order, the join and meet operation for any two elements $\mathbf{x},\mathbf{x'}\in\mathbb{R}^n$ are element-wise maximum and minimum, respectively, meaning:
\begin{align}
(\mathbf{x}\joinone\mathbf{x'})_i &= \max\{\mathbf{x}_i,\mathbf{x}_i'\},\4all i=1,2,...,n,\label{eq:rn_join} \\
(\mathbf{x}\meetone\mathbf{x'})_i &= \min\{\mathbf{x}_i,\mathbf{x}_i'\},\4all i=1,2,...,n.\label{eq:rn_meet}
\end{align}

Given a lattice $\latone$, consider a function $f:\latone\rightarrow\mathbb{R}$.  The function $f$ is \emph{submodular} on the lattice $\latone$ when the following inequality holds for all $\mathbf{x},\mathbf{x}'\in\latone$:
\begin{align}
f(\mathbf{x})+f(\mathbf{x'}) \geq f(\mathbf{x}\joinone\mathbf{x'}) + f(\mathbf{x}\meetone\mathbf{x'}).\label{eq:lat_fn_submodular}
\end{align}
The function $f$ is \emph{monotone} when it satisfies:
\begin{align}
\mathbf{x}\leqone\mathbf{x'}\quad \implies \quad f(\mathbf{x}) \leq f(\mathbf{x'}).\label{eq:lat_fn_monotone}
\end{align}

When working with the lattice $(2^V,\subseteq)$, the submodular inequality \eqref{eq:lat_fn_submodular} becomes:
\begin{align}
f(A) + f(B) \geq f(A\cup B) + f(A\cap B) \quad \4all A,B\subseteq V.\label{eq:set_fn_submodular}
\end{align}
Similarly, the monotonicity implication \eqref{eq:lat_fn_monotone} becomes:
\begin{align}
A\subseteq B \quad\implies\quad f(A) \leq f(B).\label{eq:set_fn_monotone}
\end{align}
Minimizing or maximizing an arbitrary set function is NP-Hard in general. If the set function is submodular, however, it can be exactly minimized and approximately maximized (up to a constant-factor approximation ratio) in polynomial time \cite{schrijver2003combinatorial,nemhauser1978analysis}.  The computational tractability of submodular optimization for set functions has a variety of applications in countless fields such as sparse regression, summarization, and sensor placement \cite{elenberg2018restricted,hui2011class,krause2006near}.

When working with the lattice $(\mathbb{R}^n,\leqone)$, a function $f:\mathbb{R}^n\rightarrow\mathbb{R}$ is submodular when:
\begin{align}
f(\mathbf{x}) + f(\mathbf{x'}) \geq f(\max\{\mathbf{x},\mathbf{x'}\}) + f(\min\{\mathbf{x},\mathbf{x'}\})\quad\4all \mathbf{x},\mathbf{x'}\in\mathbb{R}^n,
\end{align}
where the maximum and minimum operations are performed element-wise, as expressed in \eqref{eq:rn_join} and \eqref{eq:rn_meet}.  When $f$ is twice differentiable, submodularity on $\mathbb{R}^n$ is equivalent  (see \cite{topkis1998supermodularity,bach2019submodular}) to the condition:
\begin{align}
\frac{\partial^2f}{\partial \mathbf{x}_i\partial\mathbf{x}_j} &\leq 0 \quad \4all i\neq j.\label{eq:submodular_hessian}
\end{align}
Perhaps surprisingly, the guarantees associated with submodular set function optimization extend to functions that are submodular on $\mathbb{R}^n$.  In particular, submodular functions on $\mathbb{R}^n$ can be minimized over a bounded sublattice in polynomial time (see \cite{bach2019submodular}), and can be approximately maximized with constant-factor approximation ratios \cite{bian2016guaranteed,bian2017non}.

\section{Problem Formulation}
In this section, we bridge continuous and discrete submodular function minimization in one unified problem statement.  We do this by drawing inspiration from the field of structured sparsity, where the choice of zero entries in real-valued decision variables is viewed as a coupled discrete and continuous problem \cite{bach2013learning,bach2011shaping}.

To highlight the connection with structured sparsity problems, for $n\in\mathbb{Z}_{>0}$, we denote by $[n]$ the set  $\{1,2,...,n\}$, and by $2^{[n]}$ the set of all possible subsets of $[n]$.  Define the map $\mathrm{supp}:\mathbb{R}^n\rightarrow 2^{[n]}$ as:
\begin{align}
\supp{\mathbf{x}} &= \{i\in [n]\mid \mathbf{x}_i\neq 0\}.\label{eq:supp}
\end{align}
In words, $\mathrm{supp}$ returns the set of indices where the vector $\mathbf{x}$ is nonzero.  Consider arbitrary functions $f:\mathbb{R}^n\rightarrow\mathbb{R}$ and $g:2^{[n]}\rightarrow\mathbb{R}$.  Problems of the form:
\begin{align}
\minimize{\mathbf{x}\in\mathbb{R}^n}~f(\mathbf{x}) + g(\supp{\mathbf{x}}), \label{eq:cont_discrete_opt_problem}
\end{align}
often arise in structured sparse optimization, where the preferences in discrete selections (the zero entries of $\mathbf{x}$) are expressed through the function $g$.  As a special case, if we let $f(\mathbf{x}) = \Vert \mathbf{D}\mathbf{x}-\mathbf{b}\Vert_2^2$ with $\mathbf{D}\in\mathbb{R}^{m\times n}$ and $\mathbf{b}\in\mathbb{R}^m$ and define $g(A) = \vert A\vert$ as the cardinality of the set $A$, \eqref{eq:cont_discrete_opt_problem} becomes:
\begin{align}
\minimize{\mathbf{x}\in\mathbb{R}^n}~\Vert\mathbf{D}\mathbf{x}-\mathbf{b}\Vert_2^2 + \Vert\mathbf{x}\Vert_0, \tag{CS}\label{eq:compressed_sensing}
\end{align}
where $\Vert\cdot\Vert_0$ denotes the $\ell_0$ pseudo-norm.  The problem \eqref{eq:compressed_sensing} is a form of the well-studied compressed sensing problem, which is NP-Hard in general \cite{rauhut2010compressive}.

Generalizing the idea of making continuous decisions through the choice of $\mathbf{x}$ in \eqref{eq:cont_discrete_opt_problem}, and discrete decisions through the choice of the zero entries of $\mathbf{x}$, we consider two lattices, $(\latone,\leqone)$ and $(\lattwo,\leqtwo)$, related by a map $\eta:\latone\rightarrow\lattwo$.  We let $f:\latone\rightarrow\mathbb{R}$ be a function describing the cost of assignments of variables in $\latone$, and similarly let $g:\lattwo\rightarrow\mathbb{R}$ describe the associated cost of choices in $\lattwo$. Then, we seek the optimal point $\mathbf{x}^*\in\latone$ in the problem:
\begin{align}
\minimize{\mathbf{x}\in\latone}~f(\mathbf{x})+g(\eta(\mathbf{x})).\tag{P}\label{eq:lattice_opt_problem}
\end{align}
Although we will eventually let $\latone$ describe continuous choices and $\lattwo$ describe associated discrete ones, our theoretical results do not rely on the cardinality of the lattices $\latone$ and $\lattwo$.

Intuitively, problem \eqref{eq:lattice_opt_problem} asks for the element $\mathbf{x}\in\latone$ which incurs minimum cost in $\latone$, as measured by $f(\mathbf{x})$, and in $\lattwo$, as measured by $g(\eta(\mathbf{x}))$.  Given that the special case of \eqref{eq:compressed_sensing} is already hard in general, with no additional structure on $f$, $g$ and $\eta$, this problem is hopelessly difficult.  To provide the necessary structure, we make the following assumptions.
\begin{assumptions}
Consider the lattices $(\latone,\leqone)$ and $(\lattwo,\leqtwo)$ and the maps $\eta:\latone\rightarrow\lattwo$, $f:\latone\rightarrow\mathbb{R}$, and $g:\lattwo\rightarrow\mathbb{R}$.  We make the following assumptions:
\begin{enumerate}
\item The functions $f$ and $g$ are submodular on the lattices $\latone$ and $\lattwo$, respectively,
\item The function $g$ is monotone on $\lattwo$,
\item For all $\mathbf{x},\mathbf{x}'\in\latone$:
\begin{align*}
\eta(\mathbf{x}\joinone\mathbf{x}') \leqtwo \eta(\mathbf{x})\jointwo\eta(\mathbf{x}'), \quad \eta(\mathbf{x}\meetone\mathbf{x}')\leqtwo \eta(\mathbf{x})\meettwo\eta(\mathbf{x}').
\end{align*}
\end{enumerate}
\end{assumptions}

\begin{remark}
If the map $\eta:\latone\rightarrow\lattwo$ satisfies Assumption 3,  it is an order-preserving join-homomorphism, meaning it maintains the order and joins of elements in $\latone$.  (Prop. 2.19 in \cite{davey2002introduction})  Explicitly, Assumption 3 is equivalent to the condition that for any $\mathbf{x}, \mathbf{x}'\in\latone$:
\begin{gather*}
\mathbf{x}\leqone \mathbf{x}' \Rightarrow \eta(\mathbf{x})\leqtwo\eta(\mathbf{x}'),\\
\eta(\mathbf{x}\joinone\mathbf{x}') = \eta(\mathbf{x})\jointwo\eta(\mathbf{x}').
\end{gather*}  Despite this equivalence, we leave Assumption 3 as written above for clarity in future proofs.
\end{remark}
We highlighted the lattices $(\mathbb{R}^n,\leqone)$ and $(2^{[n]},\subseteq)$,  but for the map $\mathrm{supp}:\mathbb{R}^n\rightarrow 2^{[n]}$ to satisfy Assumption 3, we must restrict the domain of $f$ to only only the first orthant, $(\mathbb{R}^n_{\geq 0},\leqone)$.  As mentioned by \cite{bian2017non}, this issue can often be resolved by considering an appropriate \emph{orthant conic lattice}, which views $\mathbb{R}^n$ as a product of $n$ copies of $\mathbb{R}$ and selects a different order for each copy.  Alternatively, any least-squares problem such as \eqref{eq:compressed_sensing} can be lifted to a non-negative least-squares problem, allowing us to satisfy Assumption 3 with the map $\mathrm{supp}$, but potentially no longer satisfying Assumption 1 (see Appendix \ref{apdx:NNLS}).

Assumption 1, which requires $f$ and $g$ to be submodular can be restrictive in practice.  To mitigate this, in Section \ref{sec:quadratic_lifting} we show how some specific problem instances that do not satisfy Assumption 1--in particular when $f$ is quadratic--can be lifted to a new optimization problem that satisfies all the required assumptions.  We then derive conditions under which solving the new, lifted problem still provides a solution to the original problem that violated Assumption 1.  In contrast, the more typical way of handling non-submodular $f$ involves relaxing the definition of submodularity \eqref{eq:lat_fn_submodular} to include an additive or multiplicative constant and propagating it through a chosen algorithm to give near-optimality guarantees. \cite{eloptimal,elenberg2018restricted} Our suggested lifting, however, sidesteps the need for a particular algorithm while still providing optimality or near-optimality guarantees.

\section{Solving an Equivalent Problem}
In this section, we outline our approach for solving the problem \eqref{eq:lattice_opt_problem} by defining a related optimization problem on a single lattice.  We then prove that this related problem is a submodular function minimization problem, and that by solving it we recover a solution to \eqref{eq:lattice_opt_problem}.  Finally, we highlight some conditions under which solving this related problem is a polynomial time operation.

\subsection{The Equivalent Submodular Minimization Problem}
As expressed above, the problem \eqref{eq:lattice_opt_problem} asks for the a choice of $\mathbf{x}\in\latone$ and associated $\eta(\mathbf{x})\in\lattwo$.  Our key observation is that we could instead ask for a choice of $\mathbf{y}\in\lattwo$ and best associated $\mathbf{x}\in\latone$, leading to the problem:
\begin{align*}
\minimize{\mathbf{y}\in\lattwo}~g(\mathbf{y}) + \underset{\substack{\mathbf{x}\in\latone\\ \eta(\mathbf{x}) = \mathbf{y}}}{\min}~f(\mathbf{x}).
\end{align*}
In the special case of \eqref{eq:compressed_sensing} explored earlier, this equivalent problem becomes:
\begin{align*}
\minimize{S\in 2^{[n]}}~\vert S\vert + \underset{\substack{\mathbf{x}\in\mathbb{R}^n_{\geq 0}\\ \supp{\mathbf{x}} = S}}{\min}~\Vert\mathbf{A}\mathbf{x}-\mathbf{b}\Vert_2^2.
\end{align*}
While this new problem is clearly the same as \eqref{eq:compressed_sensing}, the innermost minimization is over the set of $\mathbf{x}\in\mathbb{R}^n_{\geq 0}$ such that $\supp{\mathbf{x}} = S$, or equivalently, $\mathbf{x}_i \neq 0$ for all $i\in S$, and $\mathbf{x}_i = 0$ for all $i\notin S$.  This feasible set is not a closed subset of $\mathbb{R}^n_{\geq 0}$, and thus the corresponding minimizer of this innermost problem may not exist \cite{borwein2000convex}.

With this issue in mind, we instead consider a slight relaxation of the above problem:
\begin{align*}
\minimize{\mathbf{y}\in\lattwo}~g(\mathbf{y}) + H(\mathbf{y}), \tag{P-R}\label{eq:lattice_opt_prob_relax}
\end{align*}
where we have defined the function $H:\lattwo\rightarrow\mathbb{R}$ as:
\begin{align}
H(\mathbf{y}) &= \underset{\substack{\mathbf{x}\in\latone\\ \eta(\mathbf{x}) \leqtwo \mathbf{y}}}{\min}~f(\mathbf{x}).\label{eq:H_definition}
\end{align}
In the special case of \eqref{eq:compressed_sensing}, this relaxation produces the problem:
\begin{align}\label{eq:compressed_sensing_relax}
\minimize{S\in 2^{[n]}}~\vert S\vert + \underset{\substack{\mathbf{x}\in\mathbb{R}^n_{\geq 0}\\ \supp{\mathbf{x}} \subseteq S}}{\min}~\Vert\mathbf{A}\mathbf{x}-\mathbf{b}\Vert_2^2,\tag{CS-R}
\end{align}
where the innermost minimization is instead over the set of $\mathbf{x}\in\mathbb{R}^n_{\geq 0}$ such that $\mathbf{x}_i = 0$ for all $i\notin S$, which is a closed subset of $\mathbb{R}^n_{\geq 0}$.

We now prove that under Assumptions 1-3, the relaxed problem \eqref{eq:lattice_opt_prob_relax} is a submodular minimization problem, and that by solving it we can recover the corresponding minimizer for \eqref{eq:lattice_opt_problem}.  As established above, minimizing functions on finitely presentable distributive lattices is efficient when the functions are submodular, so we show that the relaxed problem \eqref{eq:lattice_opt_prob_relax} is a submodular function minimization problem on $\lattwo$.

\begin{theorem}\label{thm:main_result}
Under Assumptions 1-3, the function $g + H:\lattwo\rightarrow\mathbb{R}$ is submodular on $\lattwo$, and therefore the relaxed problem \eqref{eq:lattice_opt_prob_relax} is a submodular function minimization problem over $\lattwo$.  Moreover, let $\mathbf{y}^*\in\lattwo$ be the minimizer for the problem \eqref{eq:lattice_opt_prob_relax}, and let $\mathbf{x}^*\in\latone$ be such that:
\begin{align*}
\mathbf{x}^*\in\underset{\substack{\mathbf{x}\in\latone \\ \eta(\mathbf{x})\leqtwo\mathbf{y}^*}}{\mathrm{argmin}}~f(\mathbf{x}).
\end{align*}
Then $\mathbf{x}^*$ is a minimizer for the problem \eqref{eq:lattice_opt_problem}.
\end{theorem}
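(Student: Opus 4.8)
\section{Proof proposal}

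The plan is to establish the two assertions in order: first that $H$ (and hence $g+H$) is submodular on $\lattwo$, so that \eqref{eq:lattice_opt_prob_relax} is a submodular minimization problem; then that the point $\mathbf{x}^*$ reconstructed from a minimizer $\mathbf{y}^*$ of \eqref{eq:lattice_opt_prob_relax} is globally optimal for \eqref{eq:lattice_opt_problem}. Since the sum of two submodular functions is submodular (add their defining inequalities \eqref{eq:lat_fn_submodular}) and $g$ is submodular by Assumption 1, the first assertion reduces to showing $H$ submodular.

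For submodularity of $H$, fix $\mathbf{y}_1,\mathbf{y}_2\in\lattwo$ and $\epsilon>0$, and pick $\mathbf{x}_1,\mathbf{x}_2\in\latone$ with $\eta(\mathbf{x}_i)\leqtwo\mathbf{y}_i$ and $f(\mathbf{x}_i)\le H(\mathbf{y}_i)+\epsilon$ (if the infimum in \eqref{eq:H_definition} is attained one may take $\epsilon=0$). The key step is to verify that $\mathbf{x}_1\joinone\mathbf{x}_2$ is feasible for the minimization defining $H(\mathbf{y}_1\jointwo\mathbf{y}_2)$ and $\mathbf{x}_1\meetone\mathbf{x}_2$ is feasible for the one defining $H(\mathbf{y}_1\meettwo\mathbf{y}_2)$. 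For the join: Assumption 3 gives $\eta(\mathbf{x}_1\joinone\mathbf{x}_2)\leqtwo\eta(\mathbf{x}_1)\jointwo\eta(\mathbf{x}_2)$, and since $\eta(\mathbf{x}_i)\leqtwo\mathbf{y}_i\leqtwo\mathbf{y}_1\jointwo\mathbf{y}_2$, the element $\mathbf{y}_1\jointwo\mathbf{y}_2$ is an upper bound of $\{\eta(\mathbf{x}_1),\eta(\mathbf{x}_2)\}$, so the least upper bound satisfies $\eta(\mathbf{x}_1)\jointwo\eta(\mathbf{x}_2)\leqtwo\mathbf{y}_1\jointwo\mathbf{y}_2$; transitivity then yields $\eta(\mathbf{x}_1\joinone\mathbf{x}_2)\leqtwo\mathbf{y}_1\jointwo\mathbf{y}_2$. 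Dually, $\eta(\mathbf{x}_1\meetone\mathbf{x}_2)\leqtwo\eta(\mathbf{x}_1)\meettwo\eta(\mathbf{x}_2)$, and $\eta(\mathbf{x}_1)\meettwo\eta(\mathbf{x}_2)\leqtwo\eta(\mathbf{x}_i)\leqtwo\mathbf{y}_i$ makes it a lower bound of $\{\mathbf{y}_1,\mathbf{y}_2\}$, hence $\leqtwo\mathbf{y}_1\meettwo\mathbf{y}_2$. Feasibility gives $H(\mathbf{y}_1\jointwo\mathbf{y}_2)\le f(\mathbf{x}_1\joinone\mathbf{x}_2)$ and $H(\mathbf{y}_1\meettwo\mathbf{y}_2)\le f(\mathbf{x}_1\meetone\mathbf{x}_2)$, and submodularity of $f$ yields
\[
H(\mathbf{y}_1\jointwo\mathbf{y}_2)+H(\mathbf{y}_1\meettwo\mathbf{y}_2)\le f(\mathbf{x}_1\joinone\mathbf{x}_2)+f(\mathbf{x}_1\meetone\mathbf{x}_2)\le f(\mathbf{x}_1)+f(\mathbf{x}_2)\le H(\mathbf{y}_1)+H(\mathbf{y}_2)+2\epsilon.
\]
Letting $\epsilon\downarrow 0$ shows $H$ is submodular on $\lattwo$.

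For the recovery claim, I would first note that for every $\mathbf{x}\in\latone$ the point $\mathbf{x}$ is itself feasible in \eqref{eq:H_definition} with $\mathbf{y}=\eta(\mathbf{x})$, so $H(\eta(\mathbf{x}))\le f(\mathbf{x})$; consequently the optimal value of \eqref{eq:lattice_opt_prob_relax} is at most $g(\eta(\mathbf{x}))+H(\eta(\mathbf{x}))\le f(\mathbf{x})+g(\eta(\mathbf{x}))$ for every $\mathbf{x}$, i.e.\ the optimal value of \eqref{eq:lattice_opt_prob_relax} is at most that of \eqref{eq:lattice_opt_problem}. Conversely, by construction $f(\mathbf{x}^*)=H(\mathbf{y}^*)$ and $\eta(\mathbf{x}^*)\leqtwo\mathbf{y}^*$, so monotonicity of $g$ (Assumption 2) gives $g(\eta(\mathbf{x}^*))\le g(\mathbf{y}^*)$, whence $f(\mathbf{x}^*)+g(\eta(\mathbf{x}^*))\le H(\mathbf{y}^*)+g(\mathbf{y}^*)$, which is the optimal value of \eqref{eq:lattice_opt_prob_relax}. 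Chaining the two inequalities forces equality throughout, so $\mathbf{x}^*$ attains the optimal value of \eqref{eq:lattice_opt_problem}, i.e.\ it is a minimizer.

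I expect the only genuine obstacle to be the feasibility verification inside the submodularity argument: getting the directions in Assumption 3 to line up correctly with the constraints "$\leqtwo\mathbf{y}_1\jointwo\mathbf{y}_2$" and "$\leqtwo\mathbf{y}_1\meettwo\mathbf{y}_2$" via the universal properties of join and meet, plus the minor care needed when the infimum defining $H$ is not attained (handled by the $\epsilon$-argument). The remainder is bookkeeping with the definitions and monotonicity of $g$.
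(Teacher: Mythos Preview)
Your proposal is correct and follows essentially the same approach as the paper: the feasibility verification you carry out inline is exactly the content of the paper's sublattice Lemma~\ref{lem:sublattice}, and your submodularity chain is the Topkis-style argument of Theorem~\ref{thm:topkis}. Your recovery argument is slightly cleaner than the paper's Lemma~\ref{lem:minimizers} (which splits into the cases $\eta(\mathbf{x}^*)=\mathbf{y}^*$ versus $\eta(\mathbf{x}^*)\sqsubset\mathbf{y}^*$), since you invoke monotonicity of $g$ once to handle both at a stroke; and your $\epsilon$-argument adds a small extra robustness when the infimum defining $H$ is not attained, which the paper silently assumes away.
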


To prove this result, we require a few technical lemmas.

\begin{lemma}\label{lem:sublattice}
Let $(\latone,\leqone)$ and $(\lattwo,\leqtwo)$ be lattices with the map $\eta:\latone\rightarrow\lattwo$ satisfying Assumption 3.  Then the set:
\begin{align}
\mathcal{D} &= \left\lbrace(\mathbf{x},\mathbf{y})\in \latone\times\lattwo \mid\eta(\mathbf{x})\leqtwo \mathbf{y}\right\rbrace, \label{eq:sublattice_D}
\end{align}
is a sublattice of the product lattice, $\latone\times \lattwo$.
\end{lemma}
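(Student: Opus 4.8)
The plan is to verify directly from the definition of sublattice that $\mathcal{D}$ is closed under the join and meet operations inherited from the product lattice $\latone \times \lattwo$. Recall that in the product lattice, the order is componentwise, so the join of $(\mathbf{x}, \mathbf{y})$ and $(\mathbf{x}', \mathbf{y}')$ is $(\mathbf{x} \joinone \mathbf{x}', \mathbf{y} \jointwo \mathbf{y}')$, and similarly the meet is $(\mathbf{x} \meetone \mathbf{x}', \mathbf{y} \meettwo \mathbf{y}')$. So it suffices to take two arbitrary pairs $(\mathbf{x}, \mathbf{y}), (\mathbf{x}', \mathbf{y}') \in \mathcal{D}$ — meaning $\eta(\mathbf{x}) \leqtwo \mathbf{y}$ and $\eta(\mathbf{x}') \leqtwo \mathbf{y}'$ — and show that both $(\mathbf{x} \joinone \mathbf{x}', \mathbf{y} \jointwo \mathbf{y}')$ and $(\mathbf{x} \meetone \mathbf{x}', \mathbf{y} \meettwo \mathbf{y}')$ lie in $\mathcal{D}$.

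For the join, I would chain Assumption 3 with monotonicity of $\jointwo$: by Assumption 3, $\eta(\mathbf{x} \joinone \mathbf{x}') \leqtwo \eta(\mathbf{x}) \jointwo \eta(\mathbf{x}')$, and since $\eta(\mathbf{x}) \leqtwo \mathbf{y}$ and $\eta(\mathbf{x}') \leqtwo \mathbf{y}'$, the join operation (being order preserving in each argument) gives $\eta(\mathbf{x}) \jointwo \eta(\mathbf{x}') \leqtwo \mathbf{y} \jointwo \mathbf{y}'$. Transitivity of $\leqtwo$ then yields $\eta(\mathbf{x} \joinone \mathbf{x}') \leqtwo \mathbf{y} \jointwo \mathbf{y}'$, which is exactly the membership condition for $(\mathbf{x} \joinone \mathbf{x}', \mathbf{y} \jointwo \mathbf{y}') \in \mathcal{D}$. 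The meet case is the mirror image: Assumption 3 gives $\eta(\mathbf{x} \meetone \mathbf{x}') \leqtwo \eta(\mathbf{x}) \meettwo \eta(\mathbf{x}')$, and since $\eta(\mathbf{x}) \meettwo \eta(\mathbf{x}') \leqtwo \eta(\mathbf{x}) \leqtwo \mathbf{y}$ (and likewise $\leqtwo \mathbf{y}'$), the greatest-lower-bound property of $\meettwo$ forces $\eta(\mathbf{x}) \meettwo \eta(\mathbf{x}') \leqtwo \mathbf{y} \meettwo \mathbf{y}'$, and transitivity finishes it.

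The only mild subtlety — and the step I would be most careful about — is justifying that the meet and join in $\lattwo$ are monotone in each argument, i.e. that $\mathbf{a} \leqtwo \mathbf{a}'$ and $\mathbf{b} \leqtwo \mathbf{b}'$ imply $\mathbf{a} \jointwo \mathbf{b} \leqtwo \mathbf{a}' \jointwo \mathbf{b}'$ and $\mathbf{a} \meettwo \mathbf{b} \leqtwo \mathbf{a}' \meettwo \mathbf{b}'$. This is a standard lattice fact that follows immediately from the universal properties defining join as least upper bound and meet as greatest lower bound, so it is not a real obstacle, but it is the one place where the argument uses something beyond pure symbol manipulation with Assumption 3. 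Everything else is bookkeeping: recalling that the product of two lattices is a lattice with componentwise operations, and observing that the two displayed inclusions in Assumption 3 are precisely what is needed to "push $\eta$ through" the lattice operations. I would present the join and meet verifications as two short parallel paragraphs and conclude that $\mathcal{D}$ is a sublattice.
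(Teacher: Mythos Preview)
Your proposal is correct and follows essentially the same approach as the paper: take two arbitrary pairs in $\mathcal{D}$, use Assumption~3 together with monotonicity of $\jointwo$ (resp.\ $\meettwo$) to verify closure under join (resp.\ meet). The paper is slightly terser---it writes out only the join case and dismisses the meet with ``a dual analysis follows''---whereas you spell out both and flag the monotonicity of lattice operations as the one non-formal step, but the argument is the same.
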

\begin{proof}
On the product lattice, the join of any two elements $(\mathbf{x},\mathbf{y}),(\mathbf{x}',\mathbf{y}')\in \mathcal{D}$ is denoted by $\vee_{\mathcal{D}}$, and defined as:
\begin{align*}
(\mathbf{x},\mathbf{y})\vee_{\mathcal{D}}(\mathbf{x}',\mathbf{y}') &= (\mathbf{x}\joinone\mathbf{x}',\mathbf{y}\jointwo\mathbf{y}').
\end{align*}
Then, we note that for this same $(\mathbf{x},\mathbf{y}),(\mathbf{x}',\mathbf{y}')\in \mathcal{D}$:
\begin{align*}
\eta(\mathbf{x}\joinone\mathbf{x}')&\leqtwo\eta(\mathbf{x})\jointwo\eta(\mathbf{x}') \leqtwo \mathbf{y}\jointwo\mathbf{y}',
\end{align*}
where we first used Assumption 3, then the fact that $(\mathbf{x},\mathbf{y}),(\mathbf{x}',\mathbf{y}')\in \mathcal{D}$.  Therefore, the pair $(\mathbf{x}\joinone\mathbf{x}',\mathbf{y}\jointwo\mathbf{y}')$ is also in $D$.

Because $(\mathbf{x},\mathbf{y})$ and $(\mathbf{x}',\mathbf{y}')$ were arbitrary, this holds for all of $\mathcal{D}$.  A dual analysis follows for the meet operation.
\end{proof}
The sublattice $\mathcal{D}$ is useful as the only pairs of $(\mathbf{x},\mathbf{y})\in\latone\times\lattwo$ considered in the problem \eqref{eq:lattice_opt_prob_relax} are those that are in $\mathcal{D}$.  The following theorem then uses this sublattice to prove that $H$ is submodular.  The result is a simple application of an established theorem in literature, but we include its proof here for completeness.\\
\begin{theorem}\label{thm:topkis}
(Application of \textit{Theorem 2.7.6 in \cite{topkis1998supermodularity}}) Let $f:\latone\rightarrow\mathbb{R}$, $g:\lattwo\rightarrow\mathbb{R}$, and $\eta:\latone\rightarrow\lattwo$ be maps satisfying Assumptions 1 and 3.  Then the function $g + H:\lattwo\rightarrow\mathbb{R}$, with $H$ defined as in \eqref{eq:H_definition}, is submodular on $\lattwo$.
\end{theorem}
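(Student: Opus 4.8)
The goal is to verify the submodular inequality $(g+H)(\mathbf{y}) + (g+H)(\mathbf{y}') \geq (g+H)(\mathbf{y}\jointwo\mathbf{y}') + (g+H)(\mathbf{y}\meettwo\mathbf{y}')$ for arbitrary $\mathbf{y},\mathbf{y}'\in\lattwo$. Since $g$ is submodular by Assumption 1, it suffices to show $H$ alone is submodular on $\lattwo$; the sum of two submodular functions is submodular. So the real content is the submodularity of $H(\mathbf{y}) = \min\{f(\mathbf{x}) : \eta(\mathbf{x})\leqtwo\mathbf{y}\}$.

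First I would address an existence issue: $H(\mathbf{y})$ is defined via a $\min$, and I should either assume or note that this minimum is attained (the paper's Theorem~\ref{thm:main_result} statement implicitly presumes minimizers exist, e.g. when $f$ is coercive and the feasible set closed, as in the compressed-sensing instance). Granting attainment, fix $\mathbf{y},\mathbf{y}'\in\lattwo$ and pick minimizers $\mathbf{x}\in\latone$ with $\eta(\mathbf{x})\leqtwo\mathbf{y}$ achieving $H(\mathbf{y})$, and $\mathbf{x}'\in\latone$ with $\eta(\mathbf{x}')\leqtwo\mathbf{y}'$ achieving $H(\mathbf{y}')$. Then both $(\mathbf{x},\mathbf{y})$ and $(\mathbf{x}',\mathbf{y}')$ lie in the sublattice $\mathcal{D}$ of Lemma~\ref{lem:sublattice}. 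By that lemma, $(\mathbf{x}\joinone\mathbf{x}',\mathbf{y}\jointwo\mathbf{y}')\in\mathcal{D}$ and $(\mathbf{x}\meetone\mathbf{x}',\mathbf{y}\meettwo\mathbf{y}')\in\mathcal{D}$, i.e. $\eta(\mathbf{x}\joinone\mathbf{x}')\leqtwo\mathbf{y}\jointwo\mathbf{y}'$ and $\eta(\mathbf{x}\meetone\mathbf{x}')\leqtwo\mathbf{y}\meettwo\mathbf{y}'$. Therefore $\mathbf{x}\joinone\mathbf{x}'$ is a feasible point for the minimization defining $H(\mathbf{y}\jointwo\mathbf{y}')$ and $\mathbf{x}\meetone\mathbf{x}'$ is feasible for $H(\mathbf{y}\meettwo\mathbf{y}')$, which gives
\begin{align*}
H(\mathbf{y}\jointwo\mathbf{y}') + H(\mathbf{y}\meettwo\mathbf{y}') \leq f(\mathbf{x}\joinone\mathbf{x}') + f(\mathbf{x}\meetone\mathbf{x}').
\end{align*}
Now apply submodularity of $f$ on $\latone$ (Assumption 1) to bound the right side by $f(\mathbf{x}) + f(\mathbf{x}') = H(\mathbf{y}) + H(\mathbf{y}')$. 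Chaining the two inequalities yields submodularity of $H$, hence of $g+H$, and the final sentence of the theorem (that \eqref{eq:lattice_opt_prob_relax} is a submodular minimization problem on $\lattwo$) is immediate from the definition.

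**Main obstacle.** The combinatorial heart is really already packaged in Lemma~\ref{lem:sublattice}: once one knows $\mathcal{D}$ is a sublattice, the proof is a three-line "pick optimizers, push them through the sublattice, apply submodularity of $f$" argument — this is the classical Topkis parametric-optimization trick. The one genuine subtlety I would be careful about is the attainment of the minima defining $H$: if $f$ need not attain its infimum over the feasible set, the argument must be run with $\varepsilon$-optimal points $\mathbf{x},\mathbf{x}'$ (feasibility of $\mathbf{x}\joinone\mathbf{x}'$ and $\mathbf{x}\meetone\mathbf{x}'$ is unaffected), obtaining $H(\mathbf{y}\jointwo\mathbf{y}') + H(\mathbf{y}\meettwo\mathbf{y}') \leq H(\mathbf{y}) + H(\mathbf{y}') + 2\varepsilon$ and then letting $\varepsilon\to 0$; I would either do this or simply cite the standing attainment hypothesis. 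A secondary point worth stating explicitly is that $H$ is real-valued — the feasible set $\{\mathbf{x} : \eta(\mathbf{x})\leqtwo\mathbf{y}\}$ is nonempty for every $\mathbf{y}$ provided $\eta$ is total, so $H(\mathbf{y}) < \infty$, and $H(\mathbf{y}) > -\infty$ under the same boundedness-of-$f$ assumption that guarantees attainment; otherwise the submodular inequality could degenerate into an uninformative $\infty \geq \infty$.
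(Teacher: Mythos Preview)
Your proposal is correct and follows essentially the same Topkis-style argument as the paper: pick minimizers for $H(\mathbf{y})$ and $H(\mathbf{y}')$, invoke Lemma~\ref{lem:sublattice} to get feasibility of their join and meet, then apply submodularity of $f$. The only cosmetic difference is that you prove $H$ submodular and then add $g$, whereas the paper carries $g$ through the chain of inequalities simultaneously; your additional care about attainment of the minima (and the $\varepsilon$-workaround) is a point the paper simply assumes without comment.
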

\begin{proof}
To prove this statement, we take two points $\mathbf{y},\mathbf{y}'\in\lattwo$ and compare the values of the function $g + H$, verifying the submodular inequality \eqref{eq:lat_fn_submodular}.  We note that for any $\mathbf{y},\mathbf{y'}\in\lattwo$, there are corresponding $\mathbf{z},\mathbf{z}'\in\latone$ such that:
\begin{align}
\begin{aligned}
\mathbf{z}&\in\argmin{\substack{\mathbf{x}\in\latone \\ \eta(\mathbf{x})\leqtwo \mathbf{y}}}~f(\mathbf{x}) \quad \Rightarrow\quad H(\mathbf{y}) = f(\mathbf{z}),\\
\mathbf{z}'&\in\argmin{\substack{\mathbf{x}\in\latone \\ \eta(\mathbf{x})\leqtwo \mathbf{y}'}}~f(\mathbf{x})\quad \Rightarrow \quad H(\mathbf{y}') = f(\mathbf{z}').
\end{aligned}\label{eq:z_optimality}
\end{align}
By definition, $(\mathbf{z},\mathbf{y})$ and $(\mathbf{z}',\mathbf{y}')$ are both in the subset $\mathcal{D}$ as defined in \eqref{eq:sublattice_D}.  Then, it follows:
\begin{align*}
g(\mathbf{y}) + H(\mathbf{y}) + g(\mathbf{y}') + H(\mathbf{y}') &= g(\mathbf{y}) + f(\mathbf{z}) + g(\mathbf{y}') + f(\mathbf{z}') \\
&\geq g(\mathbf{y}\jointwo\mathbf{y'}) + g(\mathbf{y}\meettwo\mathbf{y}') + f(\mathbf{z}\joinone\mathbf{z}') + f(\mathbf{z}\meetone\mathbf{z}'),
\end{align*}
where we first used \eqref{eq:z_optimality} and then the submodularity of $f$ and $g$.

By Lemma \ref{lem:sublattice}, $\mathcal{D}$ is a sublattice of $\latone\times\lattwo$, and so the pairs $(\mathbf{z}\joinone\mathbf{z}',\mathbf{y}\jointwo\mathbf{y}')$ and $(\mathbf{z}\meetone\mathbf{z}',\mathbf{y}\meettwo\mathbf{y}')$ are also in $\mathcal{D}$, meaning:
\begin{align*}
\eta(\mathbf{z}\joinone\mathbf{z}') &\leqtwo \mathbf{y}\jointwo\mathbf{y}', \\ \eta(\mathbf{z}\meetone\mathbf{z}')&\leqtwo\mathbf{y}\meettwo\mathbf{y}'.
\end{align*}
Therefore $\mathbf{z}\joinone\mathbf{z}'$ and $\mathbf{x}\meetone\mathbf{x}'$ are feasible points in the minimization defining $H(\mathbf{y}\jointwo\mathbf{y}')$ and $H(\mathbf{y}\meettwo\mathbf{y}')$, respectively, in \eqref{eq:H_definition}.  We then have, as desired:
\begin{align*}
g(\mathbf{y}) + H(\mathbf{y}) + g(\mathbf{y}') + H(\mathbf{y}') &\geq g(\mathbf{y}\jointwo\mathbf{y'}) + g(\mathbf{y}\meettwo\mathbf{y}') + f(\mathbf{z}\joinone\mathbf{z}') + f(\mathbf{z}\meetone\mathbf{z}') \\
&\geq g(\mathbf{y}\jointwo\mathbf{y'}) + g(\mathbf{y}\meettwo\mathbf{y}') +  \underset{\substack{\mathbf{x}\in\latone\\ \eta(\mathbf{x}) \leqtwo \mathbf{y}\jointwo\mathbf{y}'}}{\min}\hspace{-2.5mm}f(\mathbf{x}) +  \underset{\substack{\mathbf{x}\in\latone\\ \eta(\mathbf{x}) \leqtwo \mathbf{y}\meettwo\mathbf{y}'}}{\min}\hspace{-2.5mm}f(\mathbf{x})\\
&= g(\mathbf{y}\jointwo\mathbf{y'}) + H(\mathbf{y}\jointwo\mathbf{y}') + g(\mathbf{y}\meettwo\mathbf{y}') + H(\mathbf{y}\meettwo\mathbf{y}').
\end{align*}
\end{proof}
Because $g+H$ is submodular on $\lattwo$, solving \eqref{eq:lattice_opt_prob_relax}, is an instance of submodular function minimization.  What remains is to show that solving this relaxed problem allows us to also solve to the original problem, \eqref{eq:lattice_opt_problem}.

\begin{lemma}\label{lem:minimizers}
Let $\mathbf{y}^*\in\lattwo$ be a minimizer for the relaxed problem \eqref{eq:lattice_opt_prob_relax}, and let $\mathbf{x}^*\in\latone$ be such that:
\begin{align*}
\mathbf{x}^*\in\underset{\substack{\mathbf{x}\in\latone\\\eta(\mathbf{x})\leqtwo\mathbf{y}^*}}{\mathrm{argmin}}~f(\mathbf{x}).
\end{align*}
If $g$ satisfies Assumption 2, then $\mathbf{x}^*$ is a minimizer for the problem \eqref{eq:lattice_opt_problem}.
\end{lemma}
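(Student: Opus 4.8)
The plan is to prove the claim by a short chain of inequalities comparing the objective of \eqref{eq:lattice_opt_problem} at the candidate $\mathbf{x}^*$ with the objective of the relaxation \eqref{eq:lattice_opt_prob_relax} at its minimizer $\mathbf{y}^*$, with monotonicity of $g$ supplying the one step that is not purely formal. First I would record the ``easy direction'': for an arbitrary $\mathbf{x}\in\latone$, the point $\mathbf{y}=\eta(\mathbf{x})$ is feasible in \eqref{eq:lattice_opt_prob_relax}, and $\mathbf{x}$ itself is feasible in the inner minimization \eqref{eq:H_definition} defining $H(\eta(\mathbf{x}))$ because $\eta(\mathbf{x})\leqtwo\eta(\mathbf{x})$; hence $H(\eta(\mathbf{x}))\leq f(\mathbf{x})$, and so $g(\eta(\mathbf{x}))+H(\eta(\mathbf{x}))\leq f(\mathbf{x})+g(\eta(\mathbf{x}))$. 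Since $\mathbf{y}^*$ minimizes \eqref{eq:lattice_opt_prob_relax}, this yields
\[
g(\mathbf{y}^*)+H(\mathbf{y}^*)\;\leq\; f(\mathbf{x})+g(\eta(\mathbf{x}))\qquad\text{for every }\mathbf{x}\in\latone.
\]

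Next I would bound the objective of \eqref{eq:lattice_opt_problem} at $\mathbf{x}^*$ from above by $g(\mathbf{y}^*)+H(\mathbf{y}^*)$. By the choice of $\mathbf{x}^*$ we have $f(\mathbf{x}^*)=H(\mathbf{y}^*)$ and $\eta(\mathbf{x}^*)\leqtwo\mathbf{y}^*$; applying Assumption 2 (monotonicity of $g$) to the latter inequality gives $g(\eta(\mathbf{x}^*))\leq g(\mathbf{y}^*)$, hence
\[
f(\mathbf{x}^*)+g(\eta(\mathbf{x}^*))\;\leq\; H(\mathbf{y}^*)+g(\mathbf{y}^*).
\]
Chaining this with the displayed inequality from the previous paragraph gives $f(\mathbf{x}^*)+g(\eta(\mathbf{x}^*))\leq f(\mathbf{x})+g(\eta(\mathbf{x}))$ for all $\mathbf{x}\in\latone$, which is precisely the assertion that $\mathbf{x}^*$ solves \eqref{eq:lattice_opt_problem}. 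As a byproduct, the optimal values of \eqref{eq:lattice_opt_problem} and \eqref{eq:lattice_opt_prob_relax} coincide.

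There is no genuinely hard step; the only subtlety worth flagging is why Assumption 2 cannot be dropped. Replacing the constraint $\eta(\mathbf{x})=\mathbf{y}$ by the weaker $\eta(\mathbf{x})\leqtwo\mathbf{y}$ in \eqref{eq:H_definition} can only decrease the inner objective, so \eqref{eq:lattice_opt_prob_relax} is a true relaxation of \eqref{eq:lattice_opt_problem}; monotonicity of $g$ is exactly what guarantees that an optimal relaxed pair $(\mathbf{x}^*,\mathbf{y}^*)$ can be ``tightened'' back to the feasible point $\mathbf{x}^*$ of \eqref{eq:lattice_opt_problem} without increasing cost, since $g(\eta(\mathbf{x}^*))\leq g(\mathbf{y}^*)$, making the relaxation exact. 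Without it the relaxed optimum could lie strictly below the true optimum and $\mathbf{x}^*$ need not be optimal. I would also note that the hypotheses already presuppose that $\mathbf{y}^*$ and the inner $\mathrm{argmin}$ defining $\mathbf{x}^*$ exist, so no separate attainment argument is required.
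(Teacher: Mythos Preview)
Your argument is correct and follows essentially the same route as the paper's proof: both establish $g(\mathbf{y}^*)+H(\mathbf{y}^*)\le f(\mathbf{x})+g(\eta(\mathbf{x}))$ via feasibility of $\eta(\mathbf{x})$ in \eqref{eq:lattice_opt_prob_relax} and $H(\eta(\mathbf{x}))\le f(\mathbf{x})$, and then use $f(\mathbf{x}^*)=H(\mathbf{y}^*)$ together with monotonicity of $g$ applied to $\eta(\mathbf{x}^*)\leqtwo\mathbf{y}^*$ to close the chain. Your version is in fact a bit cleaner, since the paper splits into the cases $\eta(\mathbf{x}^*)=\mathbf{y}^*$ and $\eta(\mathbf{x}^*)\sqsubset\mathbf{y}^*$, whereas you handle both at once via $g(\eta(\mathbf{x}^*))\le g(\mathbf{y}^*)$.
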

\begin{proof}
To prove this lemma, we consider an optimal $\mathbf{z}^*\in\latone$ for problem \eqref{eq:lattice_opt_problem} and verify that the proposed minimizer, $\mathbf{x}^*\in\latone$, has the same cost.

We first note that by the optimality of $\mathbf{z}^*$ in problem \eqref{eq:lattice_opt_problem}:
\begin{align}
f(\mathbf{z}^*) + g(\eta(\mathbf{z}^*)) \leq f(\mathbf{x}^*) + g(\eta(\mathbf{x}^*)). \label{eq:optimality}
\end{align}

Additionally, we have:
\begin{align*}
\begin{array}{clc}
f(\mathbf{z}^*) + g(\eta(\mathbf{z}^*)) &\geq \underset{\substack{\mathbf{x}\in\latone\\\eta(\mathbf{x})\leqtwo\eta(\mathbf{z}^*)}}{\min}~f(\mathbf{x}) + g(\eta(\mathbf{z}^*))&\quad\text{(minimizing, as $\mathbf{z}^*$ is feasible)} \\
&= H(\eta(\mathbf{z}^*)) + g(\eta(\mathbf{z}^*))&\quad\text{(definition of $H$)} \\
&\geq H(\mathbf{y}^*) + g(\mathbf{y}^*)&\quad\text{(optimality of $\mathbf{y}^*$ in \ref{eq:lattice_opt_prob_relax})}\\
&= f(\mathbf{x}^*) + g(\mathbf{y}^*)&\quad\text{(definition of $\mathbf{x}^*$).}
\end{array}
\end{align*}
This sequence of inequalities implies:
\begin{align}
f(\mathbf{z}^*) + g(\eta(\mathbf{z}^*)) & \geq f(\mathbf{x}^*) + g(\mathbf{y}^*). \label{eq:lemma_two_ineq}
\end{align}
Note that because $g$ is monotone, $g(\mathbf{y}^*) \geq g(\eta(\mathbf{x}^*))$.  Using this fact, we can lower bound the right-hand side of \eqref{eq:lemma_two_ineq}:
\begin{align*}
f(\mathbf{z}^*) + g(\eta(\mathbf{z}^*)) &\geq f(\mathbf{x}^*) + g(\mathbf{y}^*)\geq f(\mathbf{x}^*) + g(\eta(\mathbf{x}^*)).
\end{align*}
By the optimality of $\mathbf{z}^*$, we see that $\mathbf{x}^*$ must also be optimal for the problem \eqref{eq:lattice_opt_problem}.
\end{proof}
This series of results gives rise to Theorem \ref{thm:main_result}, which provides sufficient conditions under which we can transform problem \eqref{eq:lattice_opt_problem}, an optimization problem on two lattices, into problem \eqref{eq:lattice_opt_prob_relax}, a submodular function minimization problem on a single lattice.

\begin{proof} \textit{(Theorem \ref{thm:main_result})}\\ Under Assumptions 1 and 3, Theorem \ref{thm:topkis} states that the function $g + H:\lattwo\rightarrow\mathbb{R}$ is submodular on the lattice $\lattwo$.  Therefore, solving \eqref{eq:lattice_opt_prob_relax} is a submodular function minimization problem over $\lattwo$, and the first part of the theorem is proved.

Under Assumption 2, by Lemma \ref{lem:minimizers}, given the minimizer $\mathbf{y}^*$ of \eqref{eq:lattice_opt_prob_relax}, the point $\mathbf{x}^*\in\latone$ defined by:
\begin{align*}
\mathbf{x}^*\in\underset{\substack{\mathbf{x}\in\latone\\\eta(\mathbf{x})\leqtwo\mathbf{y}^*}}{\mathrm{argmin}}~f(\mathbf{x}),
\end{align*} is a minimizer in the original problem \eqref{eq:lattice_opt_problem}.
\end{proof}
\subsection{Solving (P-R) in Polynomial Time\label{sec:polytime}}
Despite the submodular structure of the functions, we can only truly solve \eqref{eq:lattice_opt_prob_relax} in polynomial time if $\lattwo$ is a finitely presentable distributive lattice and we have an oracle for evaluating the functions $g$ and $H$, which we formally state next.

\begin{corollary}\label{cor:polytime}
Let $f:\latone\to\R$ be a submodular function on $(\latone,\leqone)$, let $(\lattwo,\leqtwo)$ be a finitely presentable distributive or diamond modular lattice with $g:\lattwo\to\R$ a monotone submodular function, and let $\eta:\latone\to\lattwo$ satisfy Assumption 3.  If we have access to an evaluation oracle for $g + H$, then problem \eqref{eq:lattice_opt_problem} can be solved in a polynomial number of operations and a polynomial number of calls to the oracle.
\end{corollary}
\begin{proof}
Assumptions 1, 2, and 3 are satisfied, by $\latone$, $\lattwo$, and the functions $\eta$, $f$, and $g$.  By Theorem \ref{thm:main_result}, therefore, we can solve the problem \eqref{eq:lattice_opt_problem} by instead minimizing $g+H$ over $\lattwo$, i.e., solving problem \eqref{eq:lattice_opt_prob_relax}.  Problem \eqref{eq:lattice_opt_prob_relax} is a submodular function minimization problem over a a finitely presentable distributive or diamond modular lattice, which established algorithms can solve in a polynomial number of operations and oracle calls to $g+H$ \cite{fujishige2022minimizing,schrijver2003combinatorial}.
\end{proof}
With Corollary \ref{cor:polytime} in hand, we need to construct the required oracle for $H:\lattwo\to\R$ that only requires a polynomial number of operations.  Once we have an oracle for $H$ (assuming another oracle or polynomial algorithm for evaluating $g$), solving \eqref{eq:lattice_opt_problem} clearly only requires a polynomial number of operations.

We are particularly interested in joint continuous and discrete optimization, such as when the relevant lattices are $(\latone,\leqone) = (\mathbb{R}^n_{\geq 0},\leqtwo)$ and $(\lattwo,\leqone) = (2^{[n]},\subseteq)$ connected by the map \mbox{$\mathrm{supp}:\mathbb{R}^n_{\geq 0}\rightarrow 2^{[n]}$} as expressed in \eqref{eq:supp}.  In this case, evaluating $H$ requires solving the optimization problem:
\begin{align}
\minimize{\substack{\mathbf{x}\in\mathbb{R}^n_{\geq 0}\\\supp{\mathbf{x}}\subseteq A}} f(\mathbf{x}),\label{eq:h_cont_discrete}
\end{align}
for any $A\in 2^{[n]}$.

As discussed above, when $\latone$ is the product of bounded intervals, we can rely on the continuous submodular minimization algorithms developed by \cite{bach2019submodular}.  These algorithms, however, introduce discretization error, limiting the accuracy of the evaluations of $H$.  Moreover, the simple example of \eqref{eq:h_cont_discrete} is a continuous submodular minimization problem, but the set $\mathbb{R}_{\geq 0}$ is not a bounded sublattice and thus the algorithms of \cite{bach2019submodular} do not directly apply.
 Continuous submodularity alone appears limited in this way, so we pursue other problem structures leading to algorithms for efficient and arbitrarily accurate solutions of \eqref{eq:h_cont_discrete}.

Note that for any $A\in 2^{[n]}$, the feasible set for the sub-problem \eqref{eq:h_cont_discrete} is a convex subset of $\mathbb{R}^n_{\geq 0}$.  If the function $f:\mathbb{R}^n_{\geq 0}\rightarrow\mathbb{R}$ is convex, under appropriate regularity conditions, we can use any generic convex optimization routine to solve the associated sparsity-constrained problem \eqref{eq:h_cont_discrete}.  For example, in the compressed sensing scenario shown  in \eqref{eq:compressed_sensing_relax}, evaluating $H$ amounts to solving a simple reduced least-squares problem.  More generally, we need $f:\latone\to\R$ to be convex and submodular, and the set of $\mathbf{x}\in\latone$ such that $\eta(\mathbf{x})\leqtwo\mathbf{y}$ to be a compact, convex subset for every $\mathbf{y}\in\lattwo$, alongside sufficient regularity conditions, such as constraint qualifications or the existence of separation oracles \cite{borwein2000convex,schrijver2003combinatorial}.

We have already assumed that $f$ is submodular (in this case, on $\mathbb{R}^n_{\geq 0}$), but submodular functions are neither a subset nor a superset of convex functions, so we may also require that $f$ is convex.  For example, any separable convex function $f$ satisfies this assumption, as do convex quadratic functions with non-positive off-diagonal entries, or functions on $\mathbb{R}^n$ that can be identified as the Lov\'{a}sz extension of submodular \emph{set} functions.  

Our theory is completely agnostic to the choice of algorithms for both evaluating $H$ and solving the discrete optimization problem \eqref{eq:lattice_opt_prob_relax}.  In particular, if we assume $f$ is convex, evaluate it through convex optimization, and use projected subgradient descent on the Lov\`asz extension of $g+H$ as the algorithm for solving the set function minimization, we recover exactly the approach proposed by \cite{eloptimal}.

Convexity of $f$ is not the only additional assumption on $f$ that leads to tractable evaluations of $H$ without resorting to continuous submodular minimization algorithms. As an alternative, we could consider a nonconvex quadratic form for $f:\mathbb{R}^n_{\geq 0}\rightarrow\mathbb{R}$:
\begin{align}
f(\mathbf{x}) &= \mathbf{x}^T\mathbf{Q}\mathbf{x} + \mathbf{p}^T\mathbf{x},\label{eq:quadratic_f}
\end{align}
with $\mathbf{Q}\in\mathbb{R}^{n\times n}$ and $\mathbf{p}\in\mathbb{R}^n$.  The assumption that this quadratic function is submodular on $\mathbb{R}^n_{\geq 0}$ is equivalent to the condition:
\begin{align*}
\frac{\partial^2 f}{\partial \mathbf{x}_i\partial\mathbf{x}_j} &= \mathbf{Q}_{ij} \leq 0, \quad \text{for all }i\neq j.
\end{align*}
Moreover, for a given $A\in 2^{[n]}$,  our sub-problem instance \eqref{eq:h_cont_discrete} is a constrained, nonconvex quadratic program:
\begin{align}
\begin{array}{cc}
\minimize{\mathbf{x}\in\mathbb{R}^n} & \mathbf{x}^T\mathbf{Q}\mathbf{x} + 2\mathbf{p}^T\mathbf{x} \\
\text{subject to}& \mathbf{x} \geq 0 \\
& \mathbf{x}_i = 0, ~i\notin A.
\end{array}\label{eq:quadratic_h}
\end{align}
Researchers \cite{kim2003exact} have established that nonconvex quadratic programs satisfying submodularity admit tight semidefinite program relaxations. In particular, we have the following theorem:
\begin{theorem}
(\textit{Theorem 3.1 in \cite{kim2003exact}})  Let $\mathbf{Q}\in\mathbb{R}^{n\times n}$ have nonpositive off-diagonal entries. Let $\mathrm{tr}:\mathbb{R}^{n\times n}\rightarrow\mathbb{R}$ denote the trace of a matrix, $\mathrm{diag}:\mathbb{R}^{n\times n}\rightarrow\mathbb{R}^n$ denote the diagonal entries of the matrix, and let $\succeq$ indicate the positive semidefiniteness of a symmetric matrix. Further, for any $A\in 2^{[n]}$, let $\mathbf{Z}_{A^c}$ denote the rows and columns of $\mathbf{Z}$ with indices not in the set $A$. Consider the semi-definite program:
\begin{align*}
\begin{array}{cc}
\minimize{\substack{\mathbf{z}\in\mathbb{R}^n \\ \mathbf{Z}\in \mathbb{S}^n}} & \tr{\mathbf{QZ}} + 2\mathbf{p}^T\mathbf{z} \\
\subjectto & \tr{\mathbf{Z}_{A^c}} \leq 0 \\
& \diag{\mathbf{Z}} \geq 0 \\
& \begin{bmatrix}
1 & \mathbf{z}^T \\
\mathbf{z} & \mathbf{Z}
\end{bmatrix} \succeq 0,
\end{array}
\end{align*}
Given the solution $(\mathbf{Z}^*,\mathbf{z}^*)$ to this SDP, the vector $\mathbf{x}^*_i = \sqrt{\mathbf{Z}^*_{ii}}$, $i=1,...,n$ is a minimizer for the non-convex quadratic program \eqref{eq:quadratic_h}.
\end{theorem}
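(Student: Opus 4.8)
The plan is to follow the standard route for proving exactness of an SDP relaxation: show that the SDP lower-bounds the quadratic program \eqref{eq:quadratic_h}, show that the recovered point $\mathbf{x}^*$ is feasible for \eqref{eq:quadratic_h} with objective value no larger than the SDP optimal value, and then sandwich the two optimal values against each other to force equality.

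First I would check that the SDP is genuinely a relaxation. For any $\mathbf{x}$ feasible for \eqref{eq:quadratic_h}, the pair $(\mathbf{z},\mathbf{Z}) = (\mathbf{x},\mathbf{x}\mathbf{x}^T)$ is SDP-feasible: the block matrix equals $\mathbf{v}\mathbf{v}^T$ for $\mathbf{v} = (1,\mathbf{x}^T)^T$ and is therefore positive semidefinite, $\diag{\mathbf{x}\mathbf{x}^T} = (\mathbf{x}_i^2)_i \ge 0$, and $\tr{(\mathbf{x}\mathbf{x}^T)_{A^c}} = \sum_{i\notin A}\mathbf{x}_i^2 = 0$ because $\mathbf{x}_i = 0$ for $i\notin A$; moreover the SDP objective evaluates to $\tr{\mathbf{Q}\mathbf{x}\mathbf{x}^T} + 2\mathbf{p}^T\mathbf{x} = \mathbf{x}^T\mathbf{Q}\mathbf{x} + 2\mathbf{p}^T\mathbf{x}$, matching the objective of \eqref{eq:quadratic_h}. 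Hence the SDP optimal value is at most the optimal value of \eqref{eq:quadratic_h}.

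Next, given an SDP optimum $(\mathbf{z}^*,\mathbf{Z}^*)$, I would show that $\mathbf{x}^*_i = \sqrt{\mathbf{Z}^*_{ii}}$ is feasible for \eqref{eq:quadratic_h} and that its objective value $(\mathbf{x}^*)^T\mathbf{Q}\mathbf{x}^* + 2\mathbf{p}^T\mathbf{x}^*$ is at most $\tr{\mathbf{Q}\mathbf{Z}^*} + 2\mathbf{p}^T\mathbf{z}^*$. Feasibility is immediate: $\diag{\mathbf{Z}^*} \ge 0$ makes $\mathbf{x}^* \ge 0$ well defined, and combining $\mathbf{Z}^*_{ii} \ge 0$ with $\tr{\mathbf{Z}^*_{A^c}} = \sum_{i\notin A}\mathbf{Z}^*_{ii} \le 0$ forces $\mathbf{Z}^*_{ii} = 0$, hence $\mathbf{x}^*_i = 0$, for every $i\notin A$. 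For the objective I split it into quadratic and linear parts. Since the block matrix is positive semidefinite, so is its principal submatrix $\mathbf{Z}^*$, and each $2\times 2$ principal minor gives $|\mathbf{Z}^*_{ij}| \le \sqrt{\mathbf{Z}^*_{ii}\mathbf{Z}^*_{jj}} = \mathbf{x}^*_i\mathbf{x}^*_j$; combined with the submodularity hypothesis $\mathbf{Q}_{ij} \le 0$ for $i\ne j$ this gives $\mathbf{Q}_{ij}\mathbf{x}^*_i\mathbf{x}^*_j \le \mathbf{Q}_{ij}\mathbf{Z}^*_{ij}$, while the diagonal terms satisfy $\mathbf{Q}_{ii}(\mathbf{x}^*_i)^2 = \mathbf{Q}_{ii}\mathbf{Z}^*_{ii}$ exactly, so $(\mathbf{x}^*)^T\mathbf{Q}\mathbf{x}^* \le \tr{\mathbf{Q}\mathbf{Z}^*}$. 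For the linear part I would use the Schur-complement consequence $\mathbf{Z}^*_{ii} \ge (\mathbf{z}^*_i)^2$, i.e., $\mathbf{x}^*_i \ge |\mathbf{z}^*_i|$, together with the sign information that the constraint $\mathbf{x} \ge 0$ of \eqref{eq:quadratic_h} imposes on $\mathbf{z}^*$ in the relaxation, to conclude $\mathbf{p}^T\mathbf{x}^* \le \mathbf{p}^T\mathbf{z}^*$. Adding the two estimates gives $(\mathbf{x}^*)^T\mathbf{Q}\mathbf{x}^* + 2\mathbf{p}^T\mathbf{x}^* \le \tr{\mathbf{Q}\mathbf{Z}^*} + 2\mathbf{p}^T\mathbf{z}^*$, the SDP optimal value. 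Chaining the inequalities, the optimal value of \eqref{eq:quadratic_h} is at most the objective at $\mathbf{x}^*$, which is at most the SDP optimal value, which is at most the optimal value of \eqref{eq:quadratic_h}; all three coincide, and $\mathbf{x}^*$ is a minimizer of \eqref{eq:quadratic_h}.

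The main obstacle is the linear-term comparison in the last step. The quadratic comparison falls out cleanly from positive semidefiniteness together with the nonpositive off-diagonal (submodularity) hypothesis, but controlling $\mathbf{p}^T\mathbf{x}^*$ against $\mathbf{p}^T\mathbf{z}^*$ requires care to rule out the relaxation exploiting sign cancellations or off-support mass that \eqref{eq:quadratic_h} forbids; this sign bookkeeping at an SDP optimum is the technical heart of \emph{Theorem 3.1 in \cite{kim2003exact}}. I would either invoke that theorem directly or reproduce its argument, tracking how the constraints $\mathbf{x} \ge 0$ and $\mathbf{x}_i = 0$ for $i\notin A$ propagate to $\mathbf{z}^*$ and $\mathbf{Z}^*$ at optimality.
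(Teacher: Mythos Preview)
The paper does not supply a proof of this statement; it is quoted as Theorem~3.1 of \cite{kim2003exact} and used as a black box, so there is no in-paper argument to compare your sketch against.

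On the merits of your sketch itself: the relaxation direction and the quadratic comparison $(\mathbf{x}^*)^T\mathbf{Q}\mathbf{x}^* \le \tr{\mathbf{Q}\mathbf{Z}^*}$ are handled correctly, and you are right that the linear term is where the real work lies. But your treatment of that term is a genuine gap, not just a deferred detail. You appeal to ``the sign information that the constraint $\mathbf{x}\ge 0$ of \eqref{eq:quadratic_h} imposes on $\mathbf{z}^*$ in the relaxation,'' yet the SDP as written imposes no sign constraint on $\mathbf{z}$ whatsoever; the PSD block only gives $\mathbf{Z}_{ii}^* \ge (\mathbf{z}_i^*)^2$. From $\mathbf{x}^*_i = \sqrt{\mathbf{Z}_{ii}^*} \ge |\mathbf{z}_i^*|$ you obtain $\mathbf{p}_i\mathbf{x}_i^* \le \mathbf{p}_i\mathbf{z}_i^*$ only when $\mathbf{p}_i \le 0$; for $\mathbf{p}_i > 0$ the inequality can reverse. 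Concretely, take $n=1$, $A=\{1\}$, $\mathbf{Q}=1$, $\mathbf{p}=1$: the SDP attains value $-1$ at $(\mathbf{z}^*,\mathbf{Z}^*)=(-1,1)$, the recovered point is $\mathbf{x}^*=1$ with QP value $3$, while the true QP optimum over $\mathbf{x}\ge 0$ is $0$. So the sandwich collapses. This indicates that either the SDP stated in the paper is missing a constraint (e.g.\ $\mathbf{z}\ge 0$ or entrywise $\mathbf{Z}\ge 0$) that encodes the nonnegativity of $\mathbf{x}$, or an additional hypothesis on $\mathbf{p}$ is implicit in the original result of \cite{kim2003exact}. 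Your proposed fallback of ``invoking the theorem directly'' is circular here, since that theorem is precisely the claim under consideration; to close the argument you would need to return to the formulation in \cite{kim2003exact} and identify what constraint or assumption handles the linear part.
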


Because semi-definite programs satisfying appropriate constraint qualifications can be solved in polynomial time, we could use this relaxation to evaluate $H$ for any subset $A\in 2^{[n]}$.  This approach produces the required oracle for Corollary \ref{cor:polytime}, but only requires that quadratic functions $f$ of the form \eqref{eq:quadratic_f} satisfy submodularity.


\section{Constrained Optimization}
In this and the following sections, we extend our framework both theoretically and algorithmically for the specific case of the lattices $(\mathbb{R}^n_{\geq 0}, \leqone)$ and $(2^{[n]},\subseteq)$, connected by the support map $\mathrm{supp}:\mathbb{R}^n_{\geq 0}\rightarrow 2^{[n]}$.

In many problems, we may be interested in optimization over a feasible strict subset $C\subset\mathbb{R}^n_{\geq 0}$.  Unfortunately, submodular function minimization and maximization subject to constraints is NP-Hard in general \cite{fujishige2011submodular}.  This difficulty arises because arbitrary subsets of a lattice rarely define sublattices.

One simple class of problems whose feasible sets are not sublattices are problems with \emph{budget constraints}:
\begin{align}
\begin{array}{cc}
\minimize{\mathbf{x}\in\mathbb{R}^n_{\geq 0}} & f(\mathbf{x}) + g(\supp{\mathbf{x}}) \\
\subjectto & \sum_{i=1}^nW_i(\mathbf{x}_i) \leq B,
\end{array}\label{eq:constrained_case}
\end{align}
with $W_i:\mathbb{R}_{\geq 0}\rightarrow\mathbb{R}$ strictly increasing functions for $i = 1,2,...,n$ and $B \in \mathbb{R}_{>0}$ a ``budget''.

When confronted with constrained optimization problems such as \eqref{eq:constrained_case}, one common approach is to add a Lagrange multiplier $\mu\in\mathbb{R}_{\geq 0}$ and instead solve the unconstrained problem:
\begin{align}
\minimize{\mathbf{x}\in\mathbb{R}^n_{\geq 0}}&~f(\mathbf{x}) + g(\supp{\mathbf{x}}) + \mu \sum_{i=1}^nW_i(\mathbf{x}_i).\label{eq:regularized_case}
\end{align}
For the correct choice of $\mu\in\mathbb{R}_{\geq 0}$, solving the regularized problem \eqref{eq:regularized_case} can be equivalent to solving the constrained problem \eqref{eq:constrained_case} \cite{nagano2011size,staib2019robust}.  Because \eqref{eq:constrained_case} is non-convex, identifying when this approach is valid requires some careful detail.  When possible, however, determining the $\mu$ that renders the two problems equivalent is typically a difficult task.

Our work in this section relies on the following result that relates parameterized families of submodular set function minimization problems to a single convex optimization problem.

\begin{theorem}(Proposition 8.4 in \cite{bach2013learning})\label{thm:bach_convex_submod} Let $h:2^{[n]}\rightarrow\mathbb{R}$ be a submodular set function, and $h_L:\mathbb{R}^n\rightarrow\mathbb{R}$ its Lov\`asz extension (which is therefore convex).  If, for some $\epsilon > 0$, $\psi_i:\mathbb{R}_{\geq \epsilon}\rightarrow\mathbb{R}$ is a strictly increasing function on its domain for all $i=1,2,...,n$, then the minimizer $\mathbf{u}^*\in\mathbb{R}^n_{\geq 0}$ of the convex optimization problem:
\begin{align}\label{eq:single_convex}
\minimize{\mathbf{u}\in\mathbb{R}^n_{\geq 0}}~h_L(\mathbf{u})+\sum_{i=1}^n\int_{\epsilon}^{\epsilon+\mathbf{u}_i}\psi_i(\mu)d\mu,
\end{align}
is such that the set $A^\mu = \{i\in [n]~:~\mathbf{u}_i^* > \mu\}$ is the minimizer with smallest cardinality for the submodular set function minimization problem:
\begin{align}\label{eq:family_submodular}
\minimize{A\in 2^{[n]}}~h(A) + \sum_{i\in A}\psi_i(\mu),
\end{align}
for any $\mu\in\mathbb{R}_{\geq \epsilon}$.
\end{theorem}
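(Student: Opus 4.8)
\emph{Proof plan.} The plan is to read the statement off from the first-order optimality conditions of the convex program~\eqref{eq:single_convex}, exploiting the fact that the subdifferential of a Lov\`asz extension is a face of the base polytope of the underlying submodular function. Write $h$ for the submodular set function and $h_L$ for its Lov\`asz extension (these are the objects named $g,g_L$ in the hypothesis), normalized so that $h(\emptyset)=0$, and abbreviate $\phi_\mu(A):=h(A)+\sum_{i\in A}\psi_i(\mu)$ for the parameterized function in~\eqref{eq:family_submodular}. It is convenient to substitute $\mathbf{w}=\epsilon\mathbf{1}+\mathbf{u}$: since $h_L$ is positively homogeneous, $h_L(\mathbf{w}-\epsilon\mathbf{1})=h_L(\mathbf{w})-\epsilon\,h([n])$, so~\eqref{eq:single_convex} is equivalent to minimizing $\Phi(\mathbf{w}):=h_L(\mathbf{w})+\sum_{i=1}^{n}\int_{\epsilon}^{\mathbf{w}_i}\psi_i(\mu)\,d\mu$ over $\{\mathbf{w}\in\mathbb{R}^n:\mathbf{w}\geq\epsilon\mathbf{1}\}$, and in these coordinates $A^\mu$ is the super-level set $\{i\in[n]:\mathbf{w}^*_i>\mu\}$ of $\mathbf{w}^*:=\epsilon\mathbf{1}+\mathbf{u}^*$. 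The separable part of $\Phi$ is strictly convex (each $\psi_i$ is strictly increasing) and $h_L$ is convex, so $\mathbf{w}^*$ is the unique minimizer, existence being part of the hypothesis. Finally, I recall two standard facts: (i)~$\partial h_L(\mathbf{w})\subseteq B(h):=\{\mathbf{s}\in\mathbb{R}^n:\mathbf{s}([n])=h([n]),\ \mathbf{s}(A)\leq h(A)\ \text{for all }A\subseteq[n]\}$, where $\mathbf{s}(A):=\sum_{i\in A}s_i$; and (ii)~every $\mathbf{s}\in\partial h_L(\mathbf{w})$ is tight on every super-level set of $\mathbf{w}$, i.e.\ $\mathbf{s}(\{i:\mathbf{w}_i>t\})=h(\{i:\mathbf{w}_i>t\})$ for all $t\in\mathbb{R}$ (the greedy-algorithm description of $\partial h_L$).

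Next I would write down the Karush--Kuhn--Tucker conditions for $\min_{\mathbf{w}\geq\epsilon\mathbf{1}}\Phi(\mathbf{w})$. Taking the $\psi_i$ continuous, $\tfrac{d}{d\mathbf{w}_i}\int_{\epsilon}^{\mathbf{w}_i}\psi_i(\mu)\,d\mu=\psi_i(\mathbf{w}_i)$, so optimality of $\mathbf{w}^*$ yields some $\mathbf{s}\in\partial h_L(\mathbf{w}^*)\subseteq B(h)$ and multipliers $\lambda_i\geq0$ with $\lambda_i(\mathbf{w}^*_i-\epsilon)=0$ and $s_i+\psi_i(\mathbf{w}^*_i)=\lambda_i$; equivalently, $s_i=-\psi_i(\mathbf{w}^*_i)$ whenever $\mathbf{w}^*_i>\epsilon$ and $s_i\geq-\psi_i(\epsilon)$ whenever $\mathbf{w}^*_i=\epsilon$. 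Now fix $\mu\geq\epsilon$ and set $c_i:=s_i+\psi_i(\mu)$. If $\mathbf{w}^*_i>\epsilon$ then $c_i=\psi_i(\mu)-\psi_i(\mathbf{w}^*_i)$, which is negative iff $\mu<\mathbf{w}^*_i$; if $\mathbf{w}^*_i=\epsilon$ then $c_i\geq\psi_i(\mu)-\psi_i(\epsilon)\geq0$. Since $\mu\geq\epsilon$, the two cases combine to $c_i<0\iff\mathbf{w}^*_i>\mu\iff i\in A^\mu$, while $c_i\geq0$ for every $i\notin A^\mu$; in particular $A^\mu=\{i:c_i<0\}$.

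To finish, I would combine the base-polytope inequality with tightness. For any $A\subseteq[n]$, since $\mathbf{s}\in B(h)$,
\[ \phi_\mu(A)=h(A)+\sum_{i\in A}\psi_i(\mu)\ \geq\ \mathbf{s}(A)+\sum_{i\in A}\psi_i(\mu)\ =\ \sum_{i\in A}c_i\ \geq\ \sum_{i:\,c_i<0}c_i\ =\ \sum_{i\in A^\mu}c_i. \]
On the other hand, $A^\mu$ is a super-level set of $\mathbf{w}^*$, so (ii) gives $\mathbf{s}(A^\mu)=h(A^\mu)$, and hence $\phi_\mu(A^\mu)=\sum_{i\in A^\mu}c_i$. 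Thus $\phi_\mu(A)\geq\phi_\mu(A^\mu)$ for every $A$, i.e.\ $A^\mu$ solves~\eqref{eq:family_submodular}. For the smallest-cardinality refinement: if $A$ is any other minimizer, the displayed chain must hold with equality throughout, and $\sum_{i\in A}c_i=\sum_{i:\,c_i<0}c_i$ forces $\{i:c_i<0\}\subseteq A$, that is $A^\mu\subseteq A$. Since $\mu\geq\epsilon$ was arbitrary, this is precisely the theorem.

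The step I expect to be the crux is the careful combination of facts (i)--(ii) with the inequality-constrained optimality conditions: one must verify that the subgradient delivered by optimality genuinely lies in the base polytope and is tight on \emph{all} super-level sets of $\mathbf{w}^*$ --- in particular at a threshold $t$ equal to some coordinate of $\mathbf{w}^*$, where $\{i:\mathbf{w}_i>t\}$ and $\{i:\mathbf{w}_i\ge t\}$ differ and only the former is needed --- and that the constraint $\mathbf{w}\geq\epsilon\mathbf{1}$ contributes only the one-sided relation $s_i\geq-\psi_i(\epsilon)$ at the inactive coordinates, which is exactly what makes the sign analysis of the $c_i$ valid for every $\mu\geq\epsilon$. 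A more combinatorial alternative avoids the subdifferential altogether by using the ``co-area'' identity $\Phi(\mathbf{w})=\epsilon\,h([n])+\int_{0}^{\infty}\phi_{\epsilon+t}\big(\{i:\mathbf{w}_i>\epsilon+t\}\big)\,dt$ (valid for $\mathbf{w}\geq\epsilon\mathbf{1}$, using $h(\emptyset)=0$), from which minimality of $\mathbf{w}^*$ forces its super-level sets to minimize $\phi_\mu$ for almost every $\mu$; but promoting ``almost every $\mu$'' to ``every $\mu$'' and deriving the minimal-cardinality statement then requires the monotonicity and right-continuity of the smallest minimizer as a function of the parameter, which is precisely what the Karush--Kuhn--Tucker route handles for free.
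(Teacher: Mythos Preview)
Your argument is correct and takes a genuinely different route from the paper (which simply cites Bach, and in Appendix~B reproduces Bach's proof for the variant with the $H_i$'s). Bach's approach is the ``co-area'' one you mention at the end: first prove that the minimal minimizers $A^\alpha$ are nested in $\alpha$ (Prop.~8.2), then build $\mathbf{u}^*$ from the family $\{A^\alpha\}$ via $\mathbf{u}^*_i=\sup\{\alpha:i\in A^\alpha\}$ and show, using the integral representation $h_L(\mathbf{u})=\int_0^\infty h(\{\mathbf{u}\geq t\})\,dt$, that this $\mathbf{u}^*$ minimizes the convex problem (Prop.~8.3); Prop.~8.4 then inverts the construction. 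Your KKT/base-polytope argument bypasses the nestedness lemma entirely and delivers both the ``every $\mu$'' statement and the smallest-cardinality refinement in one stroke via the sign analysis of the $c_i$; Bach's route is more elementary (no subdifferential calculus) but needs the extra monotonicity lemma and the passage from almost-every to every threshold that you flag.

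One small slip to fix: you justify $h_L(\mathbf{w}-\epsilon\mathbf{1})=h_L(\mathbf{w})-\epsilon\,h([n])$ by saying ``$h_L$ is positively homogeneous''. Positive homogeneity is $h_L(\lambda\mathbf{v})=\lambda h_L(\mathbf{v})$ and does not give this. The identity you want is the translation property $h_L(\mathbf{v}+c\mathbf{1})=h_L(\mathbf{v})+c\,h([n])$, valid for the Lov\`asz extension once $h(\emptyset)=0$; it follows directly from the level-set formula for $h_L$. The formula itself is correct, only the name is wrong.
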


In the following subsections we identify classes of problems that allow the regularized problem \eqref{eq:regularized_case} to be expressed in the form given by \eqref{eq:family_submodular}.  Theorem \ref{thm:bach_convex_submod} then provides a single convex optimization problem we can solve to recover the solution to \eqref{eq:regularized_case} for all possible values of the regularization strength $\mu$.  In prior work, this same theory was applied to purely discrete submodular minimization problems \cite{fujishige2011submodular}, and purely continuous submodular minimization problems \cite{staib2019robust}, but our work lies between these two extremes.

\subsection{Support Knapsack Constraints}
We first consider a knapsack constraint, meaning the function $W$ has the form:
\begin{align*}
W(\mathbf{x}) &= \sum_{j\in\supp{\mathbf{x}}}\mathbf{w}_j,
\end{align*}
for some $\mathbf{w}\in\mathbb{R}^n_{>0}$.  The regularized problem \eqref{eq:regularized_case} in this case is:
\begin{align*}
\minimize{\mathbf{x}\in\mathbb{R}^n_{\geq 0}}&~f(\mathbf{x}) + g(\supp{\mathbf{x}}) + \mu \sum_{j\in\supp{\mathbf{x}}}\mathbf{w}_j.
\end{align*}
Because $W$ is a set function in this case, the relaxed problem \eqref{eq:lattice_opt_prob_relax} becomes:
\begin{align}
\minimize{A\in 2^{[n]}}~g(A) + H(A) + \sum_{j\in A}\psi_j(\mu),\label{eq:regularized_support_knapsack}
\end{align}
where we have defined $\psi_j(\mu) = \mu\mathbf{w}_j$ for each $j=1,2,...,n$.  Because $\mathbf{w}_j>0$ for all $j$, these functions are strictly increasing, and we have a problem in the form \eqref{eq:family_submodular}.  By Theorem \ref{thm:bach_convex_submod}, we can solve the convex optimization problem:
\begin{align*}
\minimize{\mathbf{u}\in\mathbb{R}_{\geq \epsilon}}~g_L(\mathbf{u}) + H_L(\mathbf{u}) + \frac{1}{2}\sum_{j=1}^n\mathbf{w}_j\mathbf{u}_j^2,
\end{align*}
then appropriately threshold the solution to recover the solution to \eqref{eq:regularized_support_knapsack} for all possible values of $\mu\in\mathbb{R}_{\geq\epsilon}$.  Because $\psi_j$ is finite and strictly increasing on all of $\mathbb{R}$, we can simply select $\epsilon = 0$.

Given the solutions to the regularized problem $A^\mu$ specified by Theorem \ref{thm:bach_convex_submod}, we select the set $A^\mu$ with smallest $\mu\in\mathbb{R}$ such that the constraint $W(\mathbf{x}) \leq B$ is satisfied.  Note however, that we only recover the solution for \emph{any} given $B\in\mathbb{R}_{\geq 0}$ if the elements of $\mathbf{u}^*$ are unique \cite{bach2013learning}.  Otherwise, we only recover the solutions for a few particular values of $B$.  If these elements are unique, however, we can use the result of Theorem \ref{thm:main_result} to compute the minimizer in the original optimization problem over $\mathbb{R}^n_{\geq 0}$.  Moreover, by the same argument as in \cite{nagano2011size}, this solution corresponds to the solution of the original constrained problem.

\subsection{Continuous Budget Constraints}
As shown above, the Lov\`asz extension lets us handle problems with discrete budget constraints, so a natural next step is to consider continuous budget constraints, meaning continuous functions $W:\mathbb{R}^n_{\geq 0}\rightarrow\mathbb{R}$, such that:
\begin{align*}
W(\mathbf{x}) &= \sum_{i=1}^nW_i(\mathbf{x}_i),
\end{align*}
with each $W_i:\mathbb{R}_{\geq 0}\rightarrow\mathbb{R}$ a strictly increasing function. With this particular $W$, the regularized optimization problem \eqref{eq:regularized_case} with Lagrange multiplier $\mu\in\mathbb{R}_{\geq 0}$ becomes:
\begin{align*}
\minimize{\mathbf{x}\in\mathbb{R}^n_{\geq 0}}~f(\mathbf{x}) + g(\supp{\mathbf{x}}) + \mu\sum_{i=1}^nW_i(\mathbf{x}_i).
\end{align*}
To recover the problem form \eqref{eq:family_submodular} specified by Theorem \ref{thm:bach_convex_submod}, we further assume that $f:\mathbb{R}^n_{\geq 0}\rightarrow\mathbb{R}$ is separable, i.e., $f(\mathbf{x}) = \sum_{i=1}^nf_i(\mathbf{x}_i)$.  In this case, the relaxed optimization problem \eqref{eq:lattice_opt_prob_relax} is:
\begin{align}
\minimize{A\in 2^{[n]}}~g(A) + \sum_{i\in A} H_i(\mu),\label{eq:cont_budget_regularized}
\end{align}
where we defined $H_i : \mathbb{R}_{> 0}\rightarrow\mathbb{R}$ as the function:
\begin{align}
H_i(\mu) &= \underset{\mathbf{z}\geq 0}{\min}~f_i(\mathbf{z}) + \mu W_i(\mathbf{z}),\quad i = 1,2,...,n,\label{eq:scalar_H}
\end{align}
and assumed (without loss of generality) that $W_i(0) = f_i(0) = 0$.

To apply Theorem \ref{thm:bach_convex_submod}, we need $H_i:\mathbb{R}_{> 0}\rightarrow\mathbb{R}$ to be strictly increasing on its domain.  We verify this property in the following proposition, whose proof we detail in Appendix \ref{apdx:continuous_constraints}.

\begin{proposition}
The function $H_i:\mathbb{R}_{\geq 0}\rightarrow\mathbb{R}_{\leq 0}$ defined in \eqref{eq:scalar_H} is monotone in $\mu$ for all $i=1,2...,n$.  It is strictly increasing for all $\mu\in[0,c]$, where $c\in\mathbb{R}_{\geq 0}$ is the smallest constant such that $H_i(c) = 0$.  In addition, $H_j$ is constant and zero on the interval $[c,\infty[$.
\end{proposition}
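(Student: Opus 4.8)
The plan is to establish three elementary structural properties of the map $\mu\mapsto H_i(\mu)$ and then combine them. First I would observe that $\mathbf{z}=0$ is always feasible in \eqref{eq:scalar_H} and that, by the normalization $f_i(0)=W_i(0)=0$, this gives $H_i(\mu)\le f_i(0)+\mu W_i(0)=0$ for every $\mu\ge 0$; this both justifies the codomain $\mathbb{R}_{\le 0}$ and shows that $H_i(\mu)=0$ holds precisely when $f_i(\mathbf{z})+\mu W_i(\mathbf{z})\ge 0$ for all $\mathbf{z}\ge 0$. Next, for monotonicity, I would note that $W_i$ strictly increasing with $W_i(0)=0$ forces $W_i(\mathbf{z})\ge 0$ on $\mathbb{R}_{\ge 0}$, so for $\mu\le\mu'$ the objective satisfies $f_i(\mathbf{z})+\mu W_i(\mathbf{z})\le f_i(\mathbf{z})+\mu' W_i(\mathbf{z})$ pointwise in $\mathbf{z}$; taking the minimum over $\mathbf{z}\ge 0$ of both sides yields $H_i(\mu)\le H_i(\mu')$, i.e.\ $H_i$ is non-decreasing. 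Finally, I would record that $H_i$ is \emph{concave} on $\mathbb{R}_{\ge 0}$: for each fixed $\mathbf{z}$ the function $\mu\mapsto f_i(\mathbf{z})+\mu W_i(\mathbf{z})$ is affine, and $H_i$ is the pointwise infimum of this family of affine functions, hence concave.

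With these three facts in hand, the constancy statement on $[c,\infty[$ follows from monotonicity alone: for $\mu\ge c$ we have $0=H_i(c)\le H_i(\mu)\le 0$, hence $H_i(\mu)=0$. For strict monotonicity on $[0,c]$ I would argue by contradiction. Suppose $H_i(\mu_1)=H_i(\mu_2)=:v$ for some $0\le\mu_1<\mu_2\le c$. Writing $\mu_2=\lambda\mu_1+(1-\lambda)\mu$ with $\lambda=\tfrac{\mu-\mu_2}{\mu-\mu_1}\in(0,1)$ for an arbitrary $\mu>\mu_2$ and applying concavity gives $v=H_i(\mu_2)\ge\lambda H_i(\mu_1)+(1-\lambda)H_i(\mu)=\lambda v+(1-\lambda)H_i(\mu)$, so $H_i(\mu)\le v$; combined with the non-decreasing property this forces $H_i(\mu)=v$ for all $\mu\ge\mu_2$, and monotonicity on $[\mu_1,\mu_2]$ then gives $H_i\equiv v$ on all of $[\mu_1,\infty[$. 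In particular $v=H_i(c)=0$, so $H_i(\mu_1)=0$ with $\mu_1<c$, contradicting the minimality of $c$. Hence $H_i$ is strictly increasing on $[0,c]$.

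The part that needs care is the well-posedness of the statement rather than the inequalities themselves. One must ensure the minimum in \eqref{eq:scalar_H} is finite (e.g.\ $H_i(\mu)>-\infty$, which one can guarantee by assuming $f_i$ is bounded below and lower semicontinuous with $f_i+W_i$ coercive, so the infimum is moreover attained as the notation suggests), and one must address the case in which no finite $c$ with $H_i(c)=0$ exists, i.e.\ $H_i(\mu)<0$ for all $\mu$. In that case one adopts the convention $c=+\infty$, the interval $[c,\infty[$ is empty, and the contradiction argument above shows $H_i$ is strictly increasing on all of $[0,\infty[$, so the stated conclusion holds verbatim. I expect the only genuine bookkeeping in the appendix proof to be handling these edge cases and confirming that $W_i$ strictly increasing together with $W_i(0)=0$ indeed yields $W_i\ge 0$ on $\mathbb{R}_{\ge 0}$, which underlies the monotonicity step.
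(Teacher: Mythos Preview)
Your proof is correct, and it diverges from the paper's argument in the strict-monotonicity step. Both you and the paper establish $H_i\le 0$ by plugging in $\mathbf{z}=0$ and obtain (weak) monotonicity from $W_i\ge 0$ on $\mathbb{R}_{\ge 0}$. For strictness, however, the paper works directly with minimizers: it fixes $\mathbf{z}^\alpha,\mathbf{z}^\beta$ attaining $H_i(\alpha),H_i(\beta)$ and chains $H_i(\alpha)\le f_i(\mathbf{z}^\beta)+\alpha W_i(\mathbf{z}^\beta)\le f_i(\mathbf{z}^\beta)+\beta W_i(\mathbf{z}^\beta)=H_i(\beta)$, arguing that equality forces $W_i(\mathbf{z}^\beta)=0$, hence $\mathbf{z}^\beta=0$ and $H_i(\beta)=0$. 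You instead exploit that $H_i$ is a pointwise infimum of affine functions, hence concave, and use the standard fact that a concave non-decreasing function which is constant on an interval must be constant to the right of that interval; the contradiction with the minimality of $c$ then closes the argument.

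Your route has two advantages. First, it never needs the infimum in \eqref{eq:scalar_H} to be attained---only that it is finite---whereas the paper's argument invokes actual minimizers $\mathbf{z}^\alpha,\mathbf{z}^\beta$. Second, you explicitly handle the edge case $c=+\infty$, which the paper leaves implicit. The paper's approach, by contrast, is slightly more elementary in that it avoids the concavity observation and reads off strictness directly from the optimality conditions; it also makes transparent the equivalence ``$H_i(\alpha)<0$ iff the minimizer is strictly positive,'' which you do not need but which is used elsewhere in the appendix. Either argument is fully adequate here.
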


Because the only point at which $H_i$ is not strictly increasing occurs when its value is exactly zero (implying that allowing the element $\mathbf{x}_i$ to be nonzero provides no decrease in continuous cost), the desired result from Theorem \ref{thm:bach_convex_submod} still holds with only a minor modification, the details of which we also defer to Appendix \ref{apdx:continuous_constraints}.

It then follows from Theorem \ref{thm:bach_convex_submod} that by solving the single convex optimization problem:
\begin{align}
\minimize{\mathbf{u}\in\mathbb{R}^n_{\geq 0}}~g_L(\mathbf{u}) + \sum_{i=1}^n\int_\epsilon^{\epsilon+\mathbf{u}_i}H_i(\mu)d\mu,
\end{align}
we can recover the solution to a family of regularized optimization problems \eqref{eq:cont_budget_regularized}.  As before, we select the set $A^\mu$ with the largest $\mu\in\mathbb{R}_{\geq \epsilon}$ such that the budget constraint $W(\mathbf{x})\leq B$ is satisfied.  As discussed above, we only recover the solution for \emph{all} $B\in\mathbb{R}_{\geq 0}$ if the elements of $\mathbf{u}^*$ are all unique.  Within each choice of support, simple convex duality--which we can apply when $f_i$ and $W_i$ are convex functions--guarantees the existence of a $\mu\in\mathbb{R}_{\geq 0}$ that renders the constrained problem and the regularized problem equivalent.

\section{Robust Optimization}
Joint continuous and discrete optimization problems can easily arise as sub-problems in larger contexts.  For example, in \emph{robust optimization}, we seek to solve an optimization problem while remaining resilient to worst-case problem instances.

\subsection{Motivating Example from Multiple Domain Learning}
Recent work by \cite{qian2019robust} highlighted the concept of \emph{multiple domain learning}, where a single machine learning model is trained on sets of data from $K$ different domains.  By training against worst-case distributions of the data in these domains, they show that the resulting machine learning model often achieves lower generalization and worst-case testing errors.

In particular, let the training data for a learning model be $S = \{S_1,S_2,...,S_K\}$ with $S_i$ the data from domain $i$.  We also let $f_i : W\rightarrow\mathbb{R}$ for $i=1,2,...,K$ be the empirical risk of the model on the data from each domain $i$, given parameters in some convex subset $W\subseteq\mathbb{R}^n$.  The proposed robust optimization problem is then:
\begin{align*}
\minimize{\mathbf{w}\in W}~\underset{\mathbf{p}\in C}{\max}~\sum_{i=1}^K\mathbf{p}_if_i(\mathbf{w}),
\end{align*}
with $C = \{\mathbf{p}\in \mathbb{R}^K_{\geq 0}\mid \sum_{i=1}^K\mathbf{p}_i \leq 1\}$, the simplex.  If we additionally reward the use of data from domain $i$ (or equivalently, penalize the worst-case distribution of data for including domain $i$), then we form the robust continuous and discrete optimization problem:
\begin{align*}
\minimize{\mathbf{w}\in W}~\underset{\mathbf{p}\in C}{\max}~\sum_{i=1}^K\mathbf{p}_if_i(\mathbf{w}) - g(\supp{\mathbf{p}}),
\end{align*}
with $g: 2^{[K]}\rightarrow\mathbb{R}$ a monotone submodular set function. By considering a penalty on the set of nonzero entries of the worst-case distribution, we encode some prioritization of which domains are more or less relevant to us in our application.  Then by Theorem \ref{thm:bach_convex_submod}, we can solve the inner maximization problem (with an appropriate change of signs) by adding a Lagrange multiplier $\mu$ and solving a related convex problem.

\subsection{General Results}
More generally, robust optimization problems can often be expressed as a min-max saddle point optimization problem of a function $q:\mathcal{X}\times\mathcal{Y}\rightarrow\mathbb{R}$:
\begin{align}
\maximize{\mathbf{x}\in\mathcal{X}}\underset{\mathbf{y}\in \mathcal{Y}}{\min}~q(\mathbf{x},\mathbf{y}). \label{eq:saddle_point_prob}
\end{align}
This problem is interpreted as maximizing the function $q(\mathbf{x},\mathbf{y})$ with respect to our available parameters $\mathbf{x}\in\mathcal{X}\subseteq \mathbb{R}^n$, under the worst case choice of additional problem parameters $\mathbf{y}\in \mathcal{Y}\subseteq\mathbb{R}^m$ \cite{ben2009robust}.

Given some appropriate structure for the function $q$, the min-max problem  \eqref{eq:saddle_point_prob} is surprisingly tractable.  If we define $Q:\mathcal{X}\rightarrow\mathbb{R}$ as:
\begin{align*}
Q(\mathbf{x}) &= \underset{\mathbf{y}\in \mathcal{Y}}{\min}~q(\mathbf{x},\mathbf{y}),
\end{align*}
we can express the saddle-point problem \eqref{eq:saddle_point_prob} as:
\begin{align}
\maximize{\mathbf{x}\in\mathcal{X}}~Q(\mathbf{x}).\label{eq:Q_opt}
\end{align}
If the function $q(\mathbf{x},\mathbf{y})$ is concave in $\mathbf{x}$ for any fixed $\mathbf{y}\in \mathcal{Y}$, then the function $Q$ is also concave in $\mathbf{x}$ \cite{borwein2000convex}. Moreover, we can compute a subgradient of $Q$ at any $\mathbf{x}_0\in\mathcal{X}$ as:
\begin{gather*}
\nabla_{\mathbf{x}}Q(\mathbf{x}_0) = \nabla_\mathbf{x} q(\mathbf{x}_0,\mathbf{y}^*), \\
\mathbf{y}^*\in\argmin{\mathbf{y}\in\mathcal{Y}}~q(\mathbf{x}_0,\mathbf{y}).
\end{gather*}
In other words, efficiently solving the minimization problem defining $Q$ for an $\mathbf{x}_0\in\mathcal{X}$ also gives a subgradient of $Q$.  Because $Q$ is concave in $\mathbf{x}$, even a straightforward algorithm such as projected subgradient ascent in the problem \eqref{eq:Q_opt} will converge to a global optimum.

In this work, we showed that minimization problems in the form of \eqref{eq:cont_discrete_opt_problem} with functions satisfying Assumptions 1-3 can be solved efficiently.  Suppose then, that the function $q:\mathcal{X}\times\mathcal{Y}$ is of the form:
\begin{align*}
q(\mathbf{x},\mathbf{y}) &= f(\mathbf{x},\mathbf{y}) + g(\eta(\mathbf{y}))
\end{align*}
with $f:\mathcal{X}\times \mathcal{Y}\rightarrow\mathbb{R}$ concave in $\mathbf{x}$ for any fixed $\mathbf{y}$ and also convex and submodular on $\mathcal{Y}\subseteq \mathbb{R}^n_{\geq 0}$ in $\mathbf{y}$ for any fixed $\mathbf{x}$.  If $\eta:\mathcal{Y}\rightarrow\mathcal{L}$ satisfies Assumption 3, $g:\mathcal{L}\rightarrow\mathbb{R}$ is monotone and submodular, and we assume the set of $\mathbf{y}\in\mathcal{Y}$ such that $\eta(\mathbf{y})\leqtwo \ell$ is a convex subset for any $\ell\in\mathcal{L}$, then the robust optimization problem \eqref{eq:saddle_point_prob} becomes:
\begin{align}
\maximize{\mathbf{x}\in\mathbb{R}^n}~\underset{\mathbf{y}\in \lattwo}{\min}~f(\mathbf{x},\mathbf{y}) + g(\eta(\mathbf{y})).\label{eq:max_min_prob}
\end{align}

For a given $\mathbf{x}_0\in\mathbb{R}^n$, we view the selection of $\mathbf{y}\in \mathcal{Y}$ as a worst-case, or ``adversarial'' choice of parameters for the function $f$.  The penalty on $\eta(\mathbf{y})$ suggests that the adversarial parameters are selected while considering some preferred structure, such as sparsity.  Submodularity here, implies that this adversary pays diminishing prices as it increases the number of parameters it uses. 

In addition, $Q$ becomes:
\begin{align*}
Q(\mathbf{x}) &= \underset{\mathbf{y}\in \lattwo}{\min}~f(\mathbf{x},\mathbf{y}) + g(\eta(\mathbf{y})),
\end{align*}
which is still the minimum of a family of concave functions, and therefore amenable to subgradient ascent methods as discussed above.  A subgradient of $Q$ can easily be computed as:
\begin{gather*}
\nabla_{\mathbf{x}} Q(\mathbf{x}_0) = \nabla_{\mathbf{x}}q(\mathbf{x}_0,\mathbf{y}^*) = \nabla_\mathbf{x} f(\mathbf{x}_0,\mathbf{y}^*), \\
\mathbf{y}^* \in \argmin{\mathbf{y}\in\lattwo}~f(\mathbf{x}_0,\mathbf{y}) + g(\eta(\mathbf{y})).
\end{gather*}
We collect these ideas into the following theorem.

\begin{theorem}\label{thm:robust_opt}
Consider the robust optimization problem \eqref{eq:max_min_prob}.  Assume $f:\latone\times\lattwo\rightarrow\mathbb{R}$ is concave in $\mathbf{x}\in\latone$ for any fixed $\mathbf{y}\in\lattwo$, and also convex and submodular in $\mathbf{y}\in\lattwo$ for any fixed $\mathbf{x}\in\latone$.  Let $\eta:\lattwo\rightarrow\mathcal{L}$ satisfy Assumption 3, $g:\mathcal{L}\rightarrow\mathbb{R}$ be a monotone submodular function and assume that for a given $\mathbf{\ell}\in\mathcal{L}$, the set of $\mathbf{y}\in\lattwo$ such that $\eta(\mathbf{y})\leqtwo \mathbf{\ell}$ is a convex subset of $\lattwo$.  Moreover, let $\lattwo$ be a finitely presentable distributive lattice.  For any $\epsilon \in\mathbb{R}_{>0}$, let $T\in\mathbb{Z}_{>0}$ be of order $O(\frac{1}{\epsilon^2})$, meaning as $T$ tends to infinity, there exists a constant $M\in\mathbb{R}_{>0}$ such that $T \leq \frac{M}{\epsilon^2}$.  Then $T$ iterations of projected subgradient ascent using step lengths $\eta_i = \frac{1}{\sqrt{T}}$ produces, in polynomial time, iterates $\mathbf{x}^{(i)}\in\latone$ for $i = 1,2,...,T$ such that $\frac{1}{T}\sum_{i=1}^TQ(\mathbf{x}^{(i)}) \leq Q(\mathbf{x}^*) + \epsilon$.
\end{theorem}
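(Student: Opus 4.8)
The plan is to read \eqref{eq:max_min_prob} as the maximization \eqref{eq:Q_opt} of the concave function $Q(\mathbf{x})=\min_{\mathbf{y}\in\lattwo} f(\mathbf{x},\mathbf{y})+g(\eta(\mathbf{y}))$ and to run $T$ steps of projected subgradient ascent, observing that each step costs essentially one supergradient evaluation of $Q$, which is itself one instance of the tractable problem \eqref{eq:lattice_opt_problem}. First I would record that $Q$ is concave: for each fixed $\mathbf{y}\in\lattwo$ the map $\mathbf{x}\mapsto f(\mathbf{x},\mathbf{y})+g(\eta(\mathbf{y}))$ is concave in $\mathbf{x}$ (by hypothesis, plus a constant), so $Q$ is a pointwise infimum of concave functions and hence concave. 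A Danskin-type argument then supplies a supergradient: if $\mathbf{y}^*$ attains the minimum defining $Q(\mathbf{x}_0)$, then for every $\mathbf{x}\in\latone$, using concavity of $f(\cdot,\mathbf{y}^*)$, $Q(\mathbf{x})\le f(\mathbf{x},\mathbf{y}^*)+g(\eta(\mathbf{y}^*))\le f(\mathbf{x}_0,\mathbf{y}^*)+g(\eta(\mathbf{y}^*))+\langle\nabla_{\mathbf{x}}f(\mathbf{x}_0,\mathbf{y}^*),\mathbf{x}-\mathbf{x}_0\rangle=Q(\mathbf{x}_0)+\langle\nabla_{\mathbf{x}}f(\mathbf{x}_0,\mathbf{y}^*),\mathbf{x}-\mathbf{x}_0\rangle$, so $\nabla_{\mathbf{x}}f(\mathbf{x}_0,\mathbf{y}^*)$ is a supergradient of $Q$ at $\mathbf{x}_0$.

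Next I would argue that this supergradient is computable in polynomial time. For fixed $\mathbf{x}_0$, the inner problem $\min_{\mathbf{y}\in\lattwo}f(\mathbf{x}_0,\mathbf{y})+g(\eta(\mathbf{y}))$ is exactly \eqref{eq:lattice_opt_problem} with the continuous decision variable $\mathbf{y}$ and discrete lattice $\mathcal{L}$: $f(\mathbf{x}_0,\cdot)$ is convex and submodular on $\lattwo$, $g$ is monotone and submodular on $\mathcal{L}$, $\eta$ satisfies Assumption 3, and the level sets $\{\mathbf{y}\in\lattwo:\eta(\mathbf{y})\leqtwo\ell\}$ are convex by hypothesis. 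Corollary \ref{cor:polytime} (with $\lattwo$ and $\mathcal{L}$ playing the roles of the continuous and discrete lattices) therefore returns a minimizer $\mathbf{y}^*$, and with it the supergradient $\nabla_{\mathbf{x}}f(\mathbf{x}_0,\mathbf{y}^*)$, in time polynomial in the problem data; the Euclidean projection back onto $\latone$ at each step is assumed inexpensive. Hence one iteration of projected subgradient ascent is polynomial time.

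I would then invoke the standard guarantee for projected subgradient ascent of a concave function: if the supergradients encountered are bounded in norm by some $G$ and the initial iterate is within distance $R$ of a maximizer $\mathbf{x}^*$, then with the constant step size $\eta_i=1/\sqrt{T}$ prescribed in the statement the iterates satisfy $Q(\mathbf{x}^*)-\frac{1}{T}\sum_{i=1}^{T}Q(\mathbf{x}^{(i)})\le\frac{R^2+G^2}{2\sqrt{T}}$, so that the running average of the objective values is within $\epsilon$ of the optimum once $T$ is of order $(R^2+G^2)^2/\epsilon^2=O(1/\epsilon^2)$; since each of the $T$ iterations is polynomial, the total running time is polynomial, proving the claim.

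The main obstacle is not the convergence rate, which is textbook, but the bookkeeping that makes each iteration simultaneously \emph{exact} and \emph{polynomial}: one must verify that the level-set-convexity hypothesis genuinely places the inner problem within the scope of Corollary \ref{cor:polytime}, and one must ensure the constants $G$ and $R$ are finite — which tacitly requires $\latone$ to be bounded or a coercivity argument on $Q$, a point the statement leaves implicit and that I would make explicit in the full proof.
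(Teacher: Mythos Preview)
Your proposal is correct and mirrors the paper's own argument, which is not a formal proof block but the discussion immediately preceding the theorem: $Q$ is concave as a pointwise infimum of concave functions, a supergradient at $\mathbf{x}_0$ is $\nabla_{\mathbf{x}} f(\mathbf{x}_0,\mathbf{y}^*)$ by Danskin, the inner minimizer $\mathbf{y}^*$ is obtainable in polynomial time via Corollary~\ref{cor:polytime}, and the $O(1/\epsilon^2)$ rate is the textbook projected subgradient bound. Your added remark that finiteness of the constants $G$ and $R$ tacitly requires boundedness of $\latone$ (or coercivity of $Q$) is a legitimate caveat the paper leaves implicit, but otherwise the approaches coincide.
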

The computational complexity of this approach may be high, as projected subgradient ascent can be slow in practice.  However, each sub-problem instance involves a mixed continuous and discrete optimization problem, so this complexity is warranted.

\section{Relaxing Submodularity}\label{sec:quadratic_lifting}
For the results of Theorem \ref{thm:main_result} and therefore Corollary \ref{cor:polytime} and its extensions to apply, Assumptions 1-3 must be met.  There are, however, situations where these assumptions may not hold.  For example, consider again a quadratic form for $f:\mathbb{R}^n_{\geq 0}\to\mathbb{R}$:
\begin{align}\label{eq:quadratic}
f(\mathbf{x})&=\mathbf{x}^T\mathbf{Q}\mathbf{x} + \mathbf{p}^T\mathbf{x},
\end{align}
and a monotone and submodular set function $g:2^{[n]}\to\mathbb{R}$.  Then the general lattice optimization problem \eqref{eq:lattice_opt_problem} becomes:
\begin{align}\label{eq:quadratic_opt_prob}
\begin{array}{cc}
\minimize{\mathbf{x}\in\mathbb{R}^n_{\geq 0}} & \ell(\mathbf{x}) := \mathbf{x}^T\mathbf{Q}\mathbf{x} + \mathbf{p}^T\mathbf{x} + g(\supp{\mathbf{x}}).
\end{array}
\end{align}
The assumption that $f$ is submodular on $(\mathbb{R}^n_{\geq 0},\leqone)$ is equivalent to:
\begin{align*}
\frac{\partial^2 f}{\partial \mathbf{x}_i\partial\mathbf{x}_j} &= \mathbf{Q}_{ij} \leq 0, \quad \text{for all }i\neq j.
\end{align*}
Moreover, for Corollary \ref{cor:polytime} to apply, we also need the matrix $\mathbf{Q}$ to be positive semidefinite.  These two assumptions are unlikely to both be met by quadratic forms resulting from real data.

Typically, violations of submodularity are handled by suitably relaxing the definition of submodularity with an additive or multiplicative constant \cite{elenberg2018restricted,das2018approximate}.  This constant is then propagated through the particular algorithm choice, providing a similarly relaxed optimality guarantee \cite{eloptimal}.

Alternatively, our work focuses on finding exact solutions to these joint problems in an algorithm-agnostic and efficient way.  In this spirit, we show in this section how quadratic problems such as \eqref{eq:quadratic_opt_prob} can be embedded in another optimization problem satisfying Assumptions 1-3.  We then prove conditions under which the solutions to this \emph{lifted} optimization problem--which can be efficiently found, since Assumptions 1-3 are now satisfied--correspond to an exact solution of the original quadratic problem \eqref{eq:quadratic_opt_prob}.

\subsection{Lifting Non-submodular Quadratics}
Given the quadratic form for $f$ as in \eqref{eq:quadratic}, we can decompose the matrix $\mathbf{Q}\in\mathbb{R}^{n\times n}$ into its submodular and non-submodular parts additively:
\begin{gather}\label{eq:q_matrix_split}
\mathbf{Q} = \mathbf{Q}^- + \mathbf{Q}^+, \\\label{eq:q_matrix_signs}
\begin{aligned}
\mathbf{Q}^-_{ij} &= \begin{cases} \mathbf{Q}_{ij}, & i = j\text{ or }\mathbf{Q}_{ij} \leq 0, \\ 0, &\text{otherwise},\end{cases} \qquad
\mathbf{Q}^+_{ij} = \begin{cases} \mathbf{Q}_{ij}, & i\neq j\text{ and }\mathbf{Q}_{ij} > 0 \\ 0, & \text{ otherwise}.\end{cases}
\end{aligned}
\end{gather}

Then, we define a new, lifted quadratic function $\tilde{f}:\mathbb{R}^n_{\geq 0}\times\mathbb{R}^n_{\geq 0}\to\mathbb{R}$ as:
\begin{align}\label{eq:lifted_f}
\tilde{f}(\mathbf{z},\mathbf{w}) &= \frac{1}{2}\begin{bmatrix}
\mathbf{z} \\ \mathbf{w}
\end{bmatrix}^T\begin{bmatrix}
\mathbf{Q}^- & \mathbf{Q}^+ \\
\mathbf{Q}^+ & \mathbf{Q}^-
\end{bmatrix}\begin{bmatrix}
\mathbf{z} \\ \mathbf{w}
\end{bmatrix} + \frac{1}{2}\begin{bmatrix}
\mathbf{q} \\ \mathbf{q}
\end{bmatrix}^T\begin{bmatrix}
\mathbf{z} \\ \mathbf{w}
\end{bmatrix}.
\end{align}
The lifted function $\tilde{f}$ also has some nice properties that we can use to our advantage.
\begin{lemma}\label{lem:lifted_f_symm}
The function $\tilde{f}:\mathbb{R}^n_{\geq 0}\times\mathbb{R}^n_{\geq 0}\to\mathbb{R}$ defined in \eqref{eq:lifted_f} is such that for all $(\mathbf{z},\mathbf{w})\in\mathbb{R}^n_{\geq 0}\times\mathbb{R}^n_{\geq 0}$:
\begin{gather}
\tilde{f}(\mathbf{z},\mathbf{w}) = \tilde{f}(\mathbf{w},\mathbf{z}),
\end{gather}
and for all $\mathbf{x}\in\mathbb{R}^n_{\geq 0}$:
\begin{gather}
\tilde{f}(\mathbf{x},\mathbf{x}) = f(\mathbf{x}).
\end{gather}
\end{lemma}

We can similarly lift the function $g:2^{[n]}\to\mathbb{R}$ to the function $\tilde{g}:2^{[n]}\times 2^{[n]}\to\mathbb{R}$, defined simply as:
\begin{align}\label{eq:lifted_g}
\tilde{g}(S,T) &= \frac{1}{2}\left(g(S) + g(T)\right).
\end{align}
The lifted function $\tilde{g}$ satisfies the same symmetry and embedding properties as the lifted function $\tilde{f}$.
\begin{lemma}\label{lem:lifted_g_symm}
The function $\tilde{g}$ defined in \eqref{eq:lifted_g} is such that for all $(S,T)\in 2^{[n]}\times 2^{[n]}$:
\begin{align}
\tilde{g}(S,T) = \tilde{g}(T,S),
\end{align}
and for all $A\in 2^{[n]}$:
\begin{align}
\tilde{g}(A,A) = g(A).
\end{align}
\end{lemma}

With the lifted functions $\tilde{f}$ and $\tilde{g}$ in hand, we define a lifted version of the original quadratic optimization problem \eqref{eq:quadratic_opt_prob}:
\begin{align}
\minimize{(\mathbf{z},\mathbf{w})\in\mathbb{R}^n_{\geq 0}\times \mathbb{R}^n_{\geq 0}} ~ \tilde{\ell}(\mathbf{z},\mathbf{w}) := \tilde{f}(\mathbf{z},\mathbf{w}) + \tilde{g}\left(\supp{\mathbf{z}},\supp{\mathbf{w}}\right).\label{eq:lifted_quadratic_problem}
\end{align}
If we were to solve this lifted problem and find a solution on the diagonal, i.e., a solution $(\mathbf{z}^*,\mathbf{w}^*)$ such that $\mathbf{z}^*=\mathbf{w}^*$, we immediately recover the solution to the original quadratic problem \eqref{eq:quadratic_opt_prob}.

\begin{lemma}\label{lem:lucky}
If the solution to the lifted problem \eqref{eq:lifted_quadratic_problem}, denoted $(\mathbf{z}^*,\mathbf{w}^*)\in\mathbb{R}^n_{\geq 0}\times\mathbb{R}^n_{\geq 0}$ is such that $\mathbf{z}^* = \mathbf{w}^*$, then the point $\mathbf{x}^* = \mathbf{z}^*=\mathbf{w}^*$ is an optimal solution to the original quadratic problem \eqref{eq:quadratic_opt_prob}.
\end{lemma}
\begin{proof}
By Lemmas \ref{lem:lifted_f_symm} and \ref{lem:lifted_g_symm}, we know that:
\begin{align*}
\tilde{\ell}(\mathbf{z}^*,\mathbf{w}^*) &= \ell(\mathbf{z}^*) = \ell(\mathbf{w}^*).
\end{align*}
Further, by the optimality of $(\mathbf{z}^*,\mathbf{w}^*)$ and by shrinking the feasible set, we have:
\begin{align*}
\tilde{\ell}(\mathbf{z}^*,\mathbf{w}^*) = \ell(\mathbf{z}^*) \leq \underset{\mathbf{z},\mathbf{w}\in\Rno}{\min}~\tilde{\ell}(\mathbf{z},\mathbf{w}) &\leq \underset{\substack{\mathbf{z},\mathbf{w}\in\Rno \\ \mathbf{z}=\mathbf{w}}}{\min}~\tilde{\ell}(\mathbf{z},\mathbf{w}) = \underset{\mathbf{x}\in\Rno}{\min}~\ell(\mathbf{x}).
\end{align*}
Therefore, the points $\mathbf{z}^*$ and $\mathbf{w}^*$ are also minimizers of the original problem \eqref{eq:quadratic_opt_prob}.
\end{proof}
By Lemma \ref{lem:lucky}, the solution to our initial quadratic problem is embedded in the new lifted problem \eqref{eq:lifted_quadratic_problem}.  To use this result, however, we need two key ingredients: the ability to solve the lifted problem exactly and efficiently, and a way to easily produce solutions on the diagonal.

\subsection{Efficiently solving the lifted problem}
The lifted quadratic problem \eqref{eq:lifted_quadratic_problem} has a nearly identical form to the original problem \eqref{eq:quadratic_opt_prob}, but now satisfies Assumptions 1-3, as we prove next.  As a result, we can use the approach outlined in Section \ref{sec:polytime} to solve the lifted problem.

To discuss Assumption 1 and submodularity, we define a partial order and lattice on the lifted space $\Rno\times\Rno$ so that we can discuss submodularity.  In particular, we consider the partial order $\lleq$, defined as:
\begin{align}
(\mathbf{z},\mathbf{w}) \lleq (\mathbf{z}',\mathbf{w}')\qquad \Leftrightarrow \qquad \mathbf{z} \leqone \mathbf{z}'\text{ and } \mathbf{w}\geqone \mathbf{w}',
\end{align}
where $\leqone$ denotes the partial order on $\mathbb{R}^n$ previously defined in \eqref{eq:rn_order}.  In words, we order the first part of each pair of vectors in the typical fashion, but reverse the order for the second part.  This choice of partial order also defines the join and meet operations:
\begin{align}
(\mathbf{z},\mathbf{w})\ljoin (\mathbf{z}',\mathbf{w}') &= (\mathbf{z}\joinone\mathbf{z}',\mathbf{w}\meetone\mathbf{w}') \\
(\mathbf{z},\mathbf{w})\lmeet (\mathbf{z}',\mathbf{w}') &= (\mathbf{z}\meetone\mathbf{z}',\mathbf{w}\joinone\mathbf{w}'),
\end{align}
where $\joinone$ and $\meetone$ are the join and meet operations on $(\mathbb{R}^n,\leqone)$ defined in \eqref{eq:rn_join} and \eqref{eq:rn_meet}.

By construction, then, the lifted quadratic function $\tilde{f}$ is submodular on this lattice.  Moreover, since it is a quadratic form, simple conditions guarantee its convexity.  We pursue convexity here to leverage faster exact algorithms for solving the problem, rather than the more general approach for continuous submodular minimization.  Applying the continuous submodular minimization algorithm to this lifted problem while using arbitrarily fine discretization may be of future independent interest.

\begin{lemma}\label{lem:lifted_f_submodular}
The function $\tilde{f}:\mathbb{R}^n\to\mathbb{R}^n$ defined in \eqref{eq:lifted_f} is submodular on the lattice $(\mathbb{R}^n\times\mathbb{R}^n,\lleq)$.  Further, $\tilde{f}$ is convex if and only if both  $\mathbf{Q}$ and $\mathbf{Q}^+-\mathbf{Q}^-$ are positive semidefinite.
\end{lemma}
\begin{proof}
We first note that the lattice $(\mathbb{R}^n\times\mathbb{R}^n,\lleq)$ is an \emph{orthant conic lattice}, as defined by \cite{bian2017non}.  Therefore, by Proposition 2 of \cite{bian2017non}, $\tilde{f}$ is submodular on this lattice if and only if:
\begin{align}\label{eq:cross_deriv_neg}
\frac{\partial^2 \tilde{f}}{\partial\mathbf{x}_i\partial\mathbf{x}_j} \leq 0,
\end{align}
for all $i,j=1,2,...,n$ or $i,j = n+1,n+2,...,2n$ with $i\neq j$ and:
\begin{align}\label{eq:cross_deriv_pos}
\frac{\partial^2 \tilde{f}}{\partial\mathbf{x}_i\partial\mathbf{x}_j} \geq 0,
\end{align}
for all $i = 1,2,...,n$ and $j=n+1,n+2,...,2n$.  For our lifted function $\tilde{f}$, its Hessian matrix is exactly:
\begin{align*}
\frac{\partial^2 \tilde{f}}{\partial\mathbf{x}^2} &= \begin{bmatrix}\mathbf{Q}^- & \mathbf{Q}^+ \\ \mathbf{Q}^+ & \mathbf{Q}^- \end{bmatrix}.
\end{align*}
By their construction, the matrices $\mathbf{Q}^+$ and $\mathbf{Q}^-$ satisfy both \eqref{eq:cross_deriv_neg} and \eqref{eq:cross_deriv_pos}, and $\tilde{f}$ is submodular on $(\mathbb{R}^n\times\mathbb{R}^n,\lleq)$.

For convexity, we note that the Hessian matrix must be positive semidefinite.  By the matrix similarity:
\begin{align*}
\frac{1}{2}\begin{bmatrix}
\mathbf{I} & -\mathbf{I} \\
\mathbf{I} & \mathbf{I}
\end{bmatrix}\begin{bmatrix}
\mathbf{Q}^+ & \mathbf{Q}^- \\
\mathbf{Q}^- & \mathbf{Q}+
\end{bmatrix}\begin{bmatrix}
\mathbf{I} & \mathbf{I} \\
-\mathbf{I} & \mathbf{I}
\end{bmatrix} &= \begin{bmatrix}
\mathbf{Q}^+ - \mathbf{Q}^- & \mathbf{0} \\
\mathbf{0} & \mathbf{Q}^+ + \mathbf{Q}^-
\end{bmatrix},
\end{align*}
this holds only when $\mathbf{Q} = \mathbf{Q}^+ + \mathbf{Q}^-$ and $\mathbf{Q}^+ - \mathbf{Q}^-$ are positive semidefinite.
\end{proof}
Similarly, we define a lattice in the lifted discrete space $\twon\times\twon$ using the partial order $\Subset$ defined as:
\begin{align*}
(S,T) \Subset (S',T') \qquad \Leftrightarrow S\subseteq S' \text{ and } T\supseteq T'.
\end{align*}
The join and meet operations on $(2^{[n]}\times 2^{[n]},\Subset)$, denoted by $\lsjoin$ and $\lsmeet$ respectively, are:
\begin{align*}
(S,T)\lsjoin(S',T') &= (S\cup S', T\cap T') \\
(S,T)\lsmeet(S',T') &= (S\cap S', T\cup T').
\end{align*}
We can then easily establish that the lifted function $\tilde{g}$ is submodular on the lifted discrete lattice.
\begin{lemma}\label{lem:lifted_g_submodular}
If the function $g:2^{[n]}\to\mathbb{R}$ is monotone and submodular, then the lifted function $\tilde{g}$ defined in \eqref{eq:lifted_g} is submodular on the lattice $(2^{[n]}\times 2^{[n]},\Subset)$.  Moreover, it is monotone and submodular on the product lattice, $(2^{[n]}\times 2^{[n]},\subseteq)$.
\end{lemma}
\begin{proof}
Take a set $(S,T)\in 2^{[n]}\times 2^{[n]}$ and another set $(S',T')\in 2^{[n]}\times 2^{[n]}$.  Then by definition, we have:
\begin{align*}
\tilde{g}(S,T) + \tilde{g}(S',T') &= \frac{1}{2}\left(g(S) + g(T) + g(S') + g(T')\right) \\
&\geq \frac{1}{2}\left(g(S\cap S') + g(S\cup S') + g(T\cap T') + g(T\cup T')\right) \\
&= \tilde{g}\left((S,T)\lsjoin (S',T')\right) + \tilde{g}\left((S,T)\lsmeet (S',T')\right),
\end{align*}
where the inequality follows from the submodularity of $g$, with $\lsjoin$ and $\lsmeet$ the join and meet operations associated with the partial order $\Subset$ on $2^{[n]}\times 2^{[n]}$.  By grouping terms differently, we also see that $\tilde{g}$ is also monotone and submodular on the more typical product lattice $(\twon\times\twon,\subseteq)$.
\end{proof}
Because $\tilde{g}$ is monotone on the product lattice and $\tilde{h}$ is submodular on $(\Rno\times\Rno,\lleq)$, Lemma \ref{lem:minimizers} applies, and we can define the parameterized function $\tilde{h}:2^{[n]}\times 2^{[n]}\to\mathbb{R}$:
\begin{align}
\tilde{h}(S,T) &= \underset{\substack{\mathbf{z},\mathbf{w}\in\mathbb{R}^n_{\geq 0} \\ \supp{\mathbf{z}}\subseteq S \\ \supp{\mathbf{w}}\subseteq T}}{\min} \tilde{f}(\mathbf{z},\mathbf{w}),
\end{align}
and then the solution to:
\begin{align}\label{eq:lifted_parameterized_problem}
\minimize{S,T \in 2^{[n]}\times 2^{[n]}}~\tilde{g}(S,T) + \tilde{h}(S,T)
\end{align}
corresponds to a solution of the lifted problem \eqref{eq:lifted_quadratic_problem}.

Finally, note that Assumptions 1 and 3 are satisfied by $\tilde{f}$, $\tilde{g}$, the lattices $(\twon\times\twon,\Subset)$ and $(\Rno\times\Rno,\lleq)$, and the mapping $\textrm{supp}:\Rno\times\Rno\to\twon\times\twon$.  Therefore, we have the following direct corollary of Theorem \ref{thm:main_result}.

\begin{corollary}\label{cor:lifted_parameterized_submodular}
The function $\tilde{h}:2^{[n]}\times 2^{[n]}$ is submodular on the lattice $(2^{[n]}\times 2^{[n]},\Subset)$.
\end{corollary}

Finally, if the non-submodular contribution to the quadratic form is not too large, particularly if $\mathbf{Q}^+ - \mathbf{Q}^-$ is positive semidefinite, then by Lemma \ref{lem:lifted_f_submodular} $\tilde{f}$ is also convex.  Under this assumption, Corollary \ref{cor:polytime} applies, so we can solve the lifted optimization problem exactly in polynomial time.

\begin{corollary}\label{cor:polytime_lift}
Under the same assumptions as Corollary \ref{cor:polytime}, if $\mathbf{Q}$ and $\mathbf{Q}^+-\mathbf{Q}^-$ are both positive semidefinite matrices and $g:\twon\to\R$ is monotone and submodular, then the lifted quadratic optimization problem \eqref{eq:lifted_quadratic_problem} can be solved exactly in polynomial time.
\end{corollary}

\subsection{Guarantees}
Corollary \ref{cor:polytime_lift} in the previous subsection showed that a quadratic problem that does not satisfy Assumptions 1-3 can be lifted to another quadratic problem that does.  Moreover, under mild assumptions on the problem data, the lifted problem can be solved exactly in polynomial time.  The question then arises: is this lifted problem's solution useful?

Lemma \ref{lem:lucky} stated that if we are lucky enough to compute a minimizer to the lifted problem on the diagonal, then it is also necessarily a minimizer of the original quadratic problem.  If we are unlucky, however, we would like to still to construct a minimizer of the original problem using the solution we found.  The following result shows that this is indeed possible.

\begin{lemma}\label{lem:sufficient_condition}
Let $(\mathbf{z}^*,\mathbf{w}^*)\in\mathbb{R}^n_{\geq 0}\times\mathbb{R}^n_{\geq 0}$  be a solution to the lifted quadratic optimization problem \eqref{eq:lifted_quadratic_problem}.  If:
\begin{align}
(\mathbf{z}^*-\mathbf{w}^*)^T\mathbf{Q}^-(\mathbf{z}^*-\mathbf{w}^*) \leq 0,
\end{align}
then both $(\mathbf{z}^*,\mathbf{z}^*)$ and $(\mathbf{w}^*,\mathbf{w}^*)$ are also minimizers of the lifted problem.  By extension, $\mathbf{z}^*$ and $\mathbf{w}^*$ are minimizers of the original quadratic problem \eqref{eq:quadratic_opt_prob}.
\end{lemma}
\begin{proof}
By Proposition \ref{prop:symm_identity} (in the appendix), we have that:
\begin{align*}
\tilde{\ell}(\mathbf{z}^*,\mathbf{z}^*) + \tilde{\ell}(\mathbf{w}^*,\mathbf{w}^*) &= 2\tilde{\ell}(\mathbf{z}^*,\mathbf{w}^*) + (\mathbf{z}^*-\mathbf{w}^*)^T\mathbf{Q}^-(\mathbf{z}^*-\mathbf{w}^*).
\end{align*}
Re-arranging, and applying the optimality of $(\mathbf{z}^*,\mathbf{w}^*)$, it follows that:
\begin{align*}
(\mathbf{z}^*-\mathbf{w}^*)^T\mathbf{Q}^-(\mathbf{z}^*-\mathbf{w}^*) &=  \underbrace{\tilde{\ell}(\mathbf{z}^*,\mathbf{z}^*) - \tilde{\ell}(\mathbf{z}^*,\mathbf{w}^*)}_{\geq 0} + \underbrace{\tilde{\ell}(\mathbf{w}^*,\mathbf{w}^*) - \tilde{\ell}(\mathbf{z}^*,\mathbf{w}^*)}_{\geq 0} \geq 0.
\end{align*}
Next, by assumption, $(\mathbf{z}^*-\mathbf{w}^*)^T\mathbf{Q}^-(\mathbf{z}^*-\mathbf{w}^*) \leq 0$, and therefore:
\begin{align*}\tilde{\ell}(\mathbf{z}^*,\mathbf{z}^*) - \tilde{\ell}(\mathbf{z}^*,\mathbf{w}^*) +\tilde{\ell}(\mathbf{w}^*,\mathbf{w}^*) - \tilde{\ell}(\mathbf{z}^*,\mathbf{w}^*) &= 0.
\end{align*}
If we again re-arrange and apply the optimality of $(\mathbf{z}^*,\mathbf{w}^*)$, we find:
\begin{align*}
0\leq \tilde{\ell}(\mathbf{z}^*,\mathbf{z}^*)-\tilde{\ell}(\mathbf{z}^*,\mathbf{w}^*) = \tilde{\ell}(\mathbf{z}^*,\mathbf{w}^*) - \tilde{\ell}(\mathbf{w}^*,\mathbf{w}^*) \leq 0,
\end{align*}
and therefore we have:
\begin{align*}
\tilde{\ell}(\mathbf{z}^*,\mathbf{z}^*) = \tilde{\ell}(\mathbf{z}^*,\mathbf{w}^*) = \tilde{\ell}(\mathbf{w}^*,\mathbf{w}^*),
\end{align*}
and by Lemma \ref{lem:lucky} the points $\mathbf{z}^*$ and $\mathbf{w}^*$ are both minimizers of the original quadratic problem \eqref{eq:quadratic_opt_prob}.
\end{proof}
Note then that for any minimizer $(\mathbf{z}^*,\mathbf{w}^*)$ of the lifted problem \eqref{eq:lifted_quadratic_problem}, by the submodularity of $\tilde{f}$ and $\tilde{g}$ and the definition of the lattice $(\mathbb{R}^n_{\geq 0}\times\mathbb{R}^n_{\geq 0},\lleq)$, we can also construct the minimizer $(\mathbf{z}^*\joinone\mathbf{w}^*,\mathbf{z}^*\meetone\mathbf{w}^*)$ and its counterpart, $(\mathbf{z}^*\meetone\mathbf{w}^*,\mathbf{z}^*\joinone\mathbf{w}^*)$.  If \emph{any} of these minimizers satisfy the criteria of Lemma \ref{lem:sufficient_condition}, then we immediately recover an optimal solution of the original quadratic problem.

The conditions required by Lemma \ref{lem:sufficient_condition} are in fact not only sufficient, but necessary.  In particular, any two solutions that are on the diagonal must satisfy them.  We defer its proof to the appendix because of its similarity to the proof of Lemma \ref{lem:sufficient_condition}.

\begin{lemma}\label{lem:necessary_condition}
If $(\mathbf{z}^*,\mathbf{z}^*)$ and $(\mathbf{w}^*,\mathbf{w}^*)$ are minimizers of the lifted problem \eqref{eq:lifted_quadratic_problem}, then:
\begin{align*}
(\mathbf{z}^*-\mathbf{w}^*)^T\mathbf{Q}^-(\mathbf{z}^*-\mathbf{w}^*) \leq 0.
\end{align*}
\end{lemma}

Lemmas \ref{lem:sufficient_condition} and \ref{lem:necessary_condition} show that the easily verified quadratic form condition on the solutions to the lifted problem are both necessary and sufficient.  In practice, we can simply solve the lifted problem and then check if the condition holds.

What might happen if the conditions of Lemma \ref{lem:sufficient_condition} are not satisfied, but we use its suggested minimizer anyways?  It turns out that these solutions are still nearly optimal, with the distance from optimality measured using the same necessary and sufficient condition in Lemmas \ref{lem:sufficient_condition} and \ref{lem:necessary_condition}.

\begin{lemma}\label{lem:bound}
Let $\mathbf{x}^*\in\mathbb{R}^n_{\geq 0}$ be a minimizer of the original quadratic problem \eqref{eq:quadratic_opt_prob}, and $(\mathbf{z}^*,\mathbf{w}^*)\in\mathbb{R}^n_{\geq 0}\times\mathbb{R}^n_{\geq 0}$ be a minimizer of the lifted quadratic problem \eqref{eq:lifted_quadratic_problem}.  Then:
\begin{align*}
\min\{\ell(\mathbf{z}^*),\ell(\mathbf{w}^*)\} \leq \ell(\mathbf{x}^*) + (\mathbf{z}^*-\mathbf{w}^*)^T\mathbf{Q}^-(\mathbf{z}^*-\mathbf{w}^*).
\end{align*}
\end{lemma}
\begin{proof}
Again applying Proposition \ref{prop:symm_identity}, we have:
\begin{align*}
\tilde{\ell}(\mathbf{z}^*,\mathbf{z}^*) + \tilde{\ell}(\mathbf{w}^*,\mathbf{w}^*) &= 2\tilde{\ell}(\mathbf{z}^*,\mathbf{w}^*) + (\mathbf{z}^*-\mathbf{w}^*)^T\mathbf{Q}^-(\mathbf{z}^*-\mathbf{w}^*).
\end{align*}
Then, applying the optimality of $(\mathbf{z}^*,\mathbf{w}^*)$, we upper bound the right hand side:
\begin{align*}
\tilde{\ell}(\mathbf{z}^*,\mathbf{z}^*) + \tilde{\ell}(\mathbf{w}^*,\mathbf{w}^*) &= 2\tilde{\ell}(\mathbf{z}^*,\mathbf{w}^*) + (\mathbf{z}^*-\mathbf{w}^*)^T\mathbf{Q}^-(\mathbf{z}^*-\mathbf{w}^*) \\
&\leq 2\tilde{\ell}(\mathbf{x}^*,\mathbf{x}^*) + (\mathbf{z}^*-\mathbf{w}^*)^T\mathbf{Q}^-(\mathbf{z}^*-\mathbf{w}^*).
\end{align*}
If we divide by two note that the minimum is less than the average, we have:
\begin{gather*}
\tilde{\ell}(\mathbf{z}^*,\mathbf{w}^*) \leq 2\tilde{\ell}(\mathbf{x}^*,\mathbf{x}^*) + (\mathbf{z}^*-\mathbf{w}^*)^T\mathbf{Q}^-(\mathbf{z}^*-\mathbf{w}^*) \\
\Rightarrow \min\{\tilde{\ell}(\mathbf{z}^*,\mathbf{z}^*),\tilde{\ell}(\mathbf{w}^*,\mathbf{w}^*)\} \leq \tilde{\ell}(\mathbf{x}^*,\mathbf{x}^*) + \frac{1}{2}(\mathbf{z}^*-\mathbf{w}^*)^T\mathbf{Q}^-(\mathbf{z}^*-\mathbf{w}^*).
\end{gather*}
Then, by Lemmas \ref{lem:lifted_f_symm} and \ref{lem:lifted_g_symm}, this implies the result:
\begin{align*}
\min\{\tilde{\ell}(\mathbf{z}^*,\mathbf{z}^*),\tilde{\ell}(\mathbf{w}^*,\mathbf{w}^*)\} = \min\{\ell(\mathbf{z}^*),\ell(\mathbf{w}^*)\} \leq \ell(\mathbf{x}^*) + \frac{1}{2}(\mathbf{z}^*-\mathbf{w}^*)^T\mathbf{Q}^-(\mathbf{z}^*-\mathbf{w}^*).
\end{align*}
\end{proof}
This series of results suggests the following approach for quadratic problems that violate Assumption 1: lift the problem to a higher-dimensional one satisfying all the required assumptions, solve the new lifted problem, then check if the conditions for Lemma \ref{lem:sufficient_condition} are satisfied.  If so, then construct the associated minimizer of the original problem.  If the conditions are not satisfied, the value we computed immediately gives an additive bound on the suboptimality of the result.

\section{Examples and Computational Evaluation}
In this section, we illustrate the proposed theoretical results on several numerical examples involving optimization on the lattices $\mathbb{R}^n_{\geq 0}$ and $2^{[n]}$.  We compare against two state-of-the-art techniques: a direct application of the continuous submodular function minimization algorithms outlined by \cite{bach2019submodular}, and the projected subgradient descent method proposed in \cite{eloptimal}.  

The algorithms for continuous submodular function minimization operate by discretizing the domain $\mathbb{R}^n_{\geq 0}$ into $k$ discrete points in each dimension, converting the continuous optimization problem into a submodular minimization problem over a bounded integer lattice.  In our examples, we consider the domain $[0,1]^n\subseteq\mathbb{R}^n_{\geq 0}$ and set the discretization level to $k = 51$ unless otherwise specified.  The algorithms for continuous submodular function minimization then solve an equivalent convex optimization problem (defined using a generalized Lov\'asz extension for the integer lattice) using projected subgradient or Frank-Wolfe techniques. In our implementation, we use the Pairwise Frank-Wolfe algorithm to solve this convex problem, with all relevant results plotted in blue and labeled \textit{Cont Submodular}.

The projected subgradient method is known to provide approximation guarantees even in the non-submodular case \cite{eloptimal}, but as shown in Section \ref{sec:polytime}, amounts to a specific choice of algorithms in our theory.  The algorithm operates by solving an equivalent convex optimization problem--in particular, minimizing the Lov\'asz extension of $g+H$ over $[0,1]^n$--using projected subgradient descent.  To implement this approach, we use IBM's CPLEX 12.8 constrained quadratic program solver in MATLAB to evaluate the function $H$ (as expressed in \eqref{eq:H_definition}) and use Polyak's rule for updating the step size.  The relevant results are plotted in red, and labeled \textit{PGD + CPLEX} in figures.

Our approach is agnostic to the choice of convex optimization and submodular set function minimization routines, so we also use CPLEX to evaluate $H$. To highlight the utility of an algorithm-agnostic approach, we also implement an active-set method for fast non-negative quadratic programming to evaluate $H$ \cite{bro1997fast}.  For the submodular set function minimization algorithm, we use the minimum-norm point algorithm from \cite{fujishige2011submodular} as implemented in MATLAB by \cite{krause2010sfo}, coupled with the semi-gradient lattice pruning strategy proposed by \cite{iyer2013fast} which has quadratic complexity and drastically reduces the problem size.  Our results are plotted in black, and labeled \textit{MNP + CPLEX} and \textit{MNP + FNNQP} in figures.

The various methods are given identical cost functions to minimize, and are run until either convergence to suboptimality below $10^{-4}$ or a maximum of 100 iterations.  The experiments were all run on a laptop with an AMD Ryzen 9 4900HS CPU and 16GB of RAM.

\subsection{Regularized Sparse Regression}
We first examine a regularized sparse regression problem, similar in spirit to \eqref{eq:compressed_sensing}.  Consider some $\mathbf{x}\in\mathbb{R}^n_{\geq 0}$, $\mathbf{D}\in\mathbb{R}^{m\times n}$, $\mathbf{b}\in\mathbb{R}^m$, and define the function $f:\mathbb{R}^n_{\geq 0}\rightarrow\mathbb{R}$ as:
\begin{align}
f(\mathbf{x}) &= \Vert \mathbf{Dx}-\mathbf{b}\Vert_2^2.\label{eq:ls_mr_f}
\end{align}
Then define the monotone submodular set function $g:2^{[n]}\rightarrow\mathbb{R}$ as:
\begin{align}
g(A) &= \begin{cases}
\lambda\left[(n-1) + \max(A) - \min(A) + \vert A\vert\right], & A\neq \emptyset, \\
0 & A = \emptyset,
\end{cases}\label{eq:ls_mr_g}
\end{align}
with $\lambda\in\mathbb{R}_{\geq 0}$, and $\max(A)$ and $\min(A)$ denoting the largest and smallest index element, respectively, in the set of indices $A$.  This choice of $g$ in the sparse regression problem \eqref{eq:lattice_opt_problem} places a high penalty on large sets of nonzero entries in the vector $\mathbf{x}\in\mathbb{R}^n_{\geq 0}$ that are far apart in index.

We generate a series of random problem instances with $m = n$ satisfying the assumption of submodularity on $\mathbb{R}^n_{\geq 0}$ and also the convexity condition of Corollary \ref{cor:polytime}.  Let $\mathrm{chol}:\mathbb{R}^{n\times n}\rightarrow\mathbb{R}^{n\times n}$ denote a Cholesky decomposition of a positive semidefinite matrix, and construct the matrix $\mathbf{D}$ in \eqref{eq:ls_mr_f} as:
\begin{align*}
\mathbf{D} &= \mathrm{chol}\left(\frac{1}{2}(\mathbf{C} + \mathbf{C}^T) + n\mathbf{I}\right), \quad \mathbf{C}_{ij}\sim\mathrm{unif}(-1,0),\4all i,j=1,2,...,n.
\end{align*}
This construction guarantees that the function $f$ in \eqref{eq:ls_mr_f} is both convex and submodular on $\mathbb{R}^n_{\geq 0}$, satisfying the conditions for Corollary \ref{cor:polytime}.  For the parameter $\mathbf{b}\in\mathbb{R}^m$, we use the signal in the top plot of Figure \ref{fig:LS_figs}, and we set the regularization strength to $\lambda = 0.05$ so that both the functions $f$ and $g$ play nontrivial roles in the combined objective function.

We plot the results from each algorithm in Figure \ref{fig:LS_figs}.  Because the minimizer of the optimization problem is a representation of $\mathbf{b}$ using structured sparse columns of $\mathbf{D}$, we show the the reconstructed vector $\mathbf{Dx}$ produced by each algorithm in the second, third, and fourth plots of Figure \ref{fig:LS_figs}.  Because there is no reliance on discretization, both the projected subgradient descent and minimum-norm point algorithms produce a much smoother result, as expected.

In the bottom left plot of Figure \ref{fig:LS_figs}, we show the cost achieved over iterations of each algorithm.  The minimum-norm point converges almost immediately to the globally optimal cost, while the projected subgradient descent method takes longer to achieve the same cost.  In contrast, the discretization error associated with the continuous submodular function minimization approach prevents it from ever achieving the true optimal cost, by a small amount.

Finally, over a small window of problem sizes, we show the running times of each algorithm in the bottom right plot of Figure \ref{fig:LS_figs}.  Interestingly, our approach presents a compromise between the slow optimality of the projected subgradient descent method and the fast but inexact continuous submodular function minimization algorithm.  Moreover, when we take advantage of the extra problem structure to use specialized algorithms, we achieve comparable running times to the continuous submodular minimization algorithm. 
\begin{figure}
\begin{center}
\begin{subfigure}{\linewidth}
	\includegraphics[width=\linewidth]{./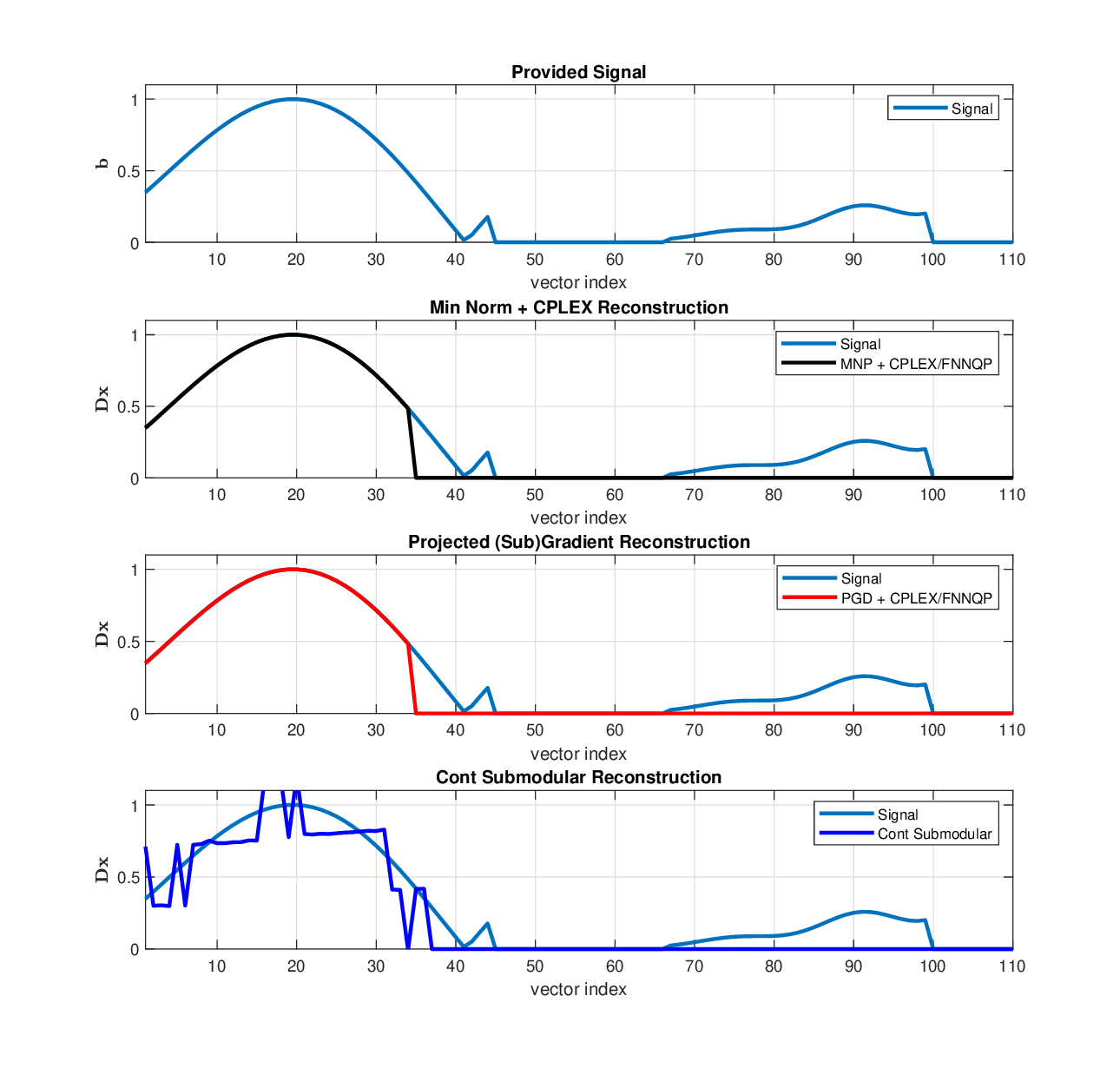}
	\vspace{-15mm}
\end{subfigure}\\
\begin{subfigure}{\linewidth}
	\includegraphics[width=\linewidth]{./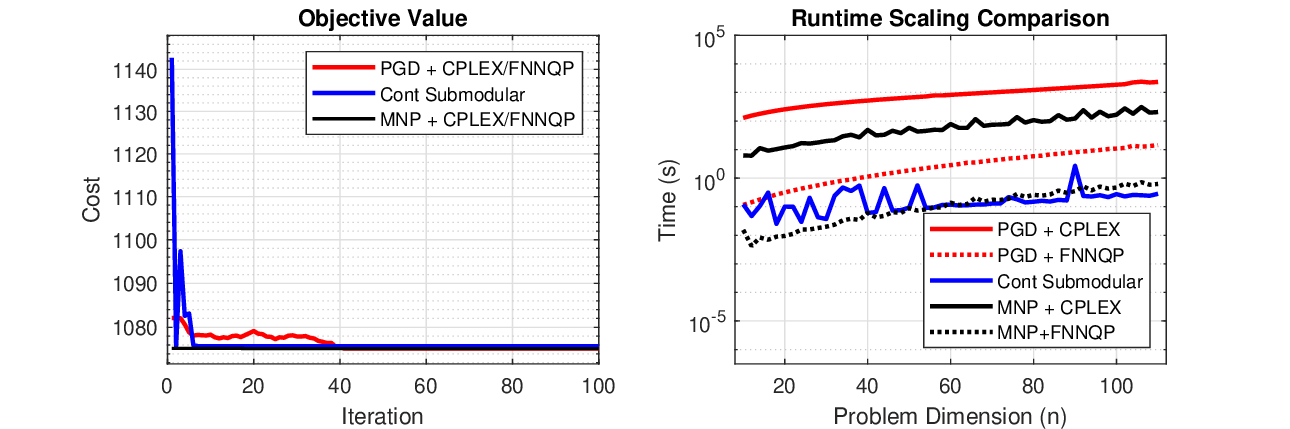}
\end{subfigure}
\end{center}
\caption{Results from the sparse regression problem simulations.  The reconstructed signal representations using columns of $\mathbf{D}$ created by each algorithm are shown in the second, third, and fourth plot. Note the solutions produced by projected subgradient and the minimum-norm point algorithm are identical. We plot the cost function value over each algorithm's iterations in the bottom left, while in the bottom right we compare the running times of the algorithms over a small window of problem dimensions.}
\label{fig:LS_figs}
\vspace{-3mm}
\end{figure}

\subsection{Signal Denoising}
We next study a simple denoising example, where we consider a signal $\mathbf{x}\in\mathbb{R}^n_{\geq 0}$, which is corrupted by some additive disturbance $\mathbf{w}\in\mathbb{R}^n$, with $\mathbf{w}\sim \mathcal{N}(0,0.1\mathbf{I})$. We would like to recover the signal $\mathbf{x}$ from the noisy measurements $\mathbf{y} = \mathbf{x} + \mathbf{w}$, under the assumption that the true signal $\mathbf{x}$ is smooth (meaning variations between adjacent entries ought to be small), and that the meaningful content arrived in a small number of contiguous sets of entries.

We can express the desire to match the noisy signal $\mathbf{y}$ with a smooth one with the convex and submodular function $f:\mathbb{R}^n\rightarrow\mathbb{R}$ defined as:
\begin{align}
f(\mathbf{x}) &= \frac{1}{2}\Vert \mathbf{x} - \mathbf{y}\Vert + \mu \sum_{i=1}^{n-1}\left(\mathbf{x}_i-\mathbf{x}_{i+1}\right)^2.\label{eq:DN_f}
\end{align}
The first term promotes matching the slightly corrupted signal, while the quadratic penalty on adjacent entries of $\mathbf{x}\in\mathbb{R}^n_{\geq 0}$ promotes smoothness.

Similarly, we can express the knowledge of a small and contiguous set of nonzero entries in the vector $\mathbf{x}$ with the monotone submodular set function $g:2^{[n]}\rightarrow\mathbb{R}$ defined by:
\begin{align}
g(A) &= \lambda\left(\vert A\vert + \mathrm{\# int}(A)\right),\label{eq:DN_g}
\end{align}
where $\lambda\in\mathbb{R}_{\geq 0}$, and the function $\mathrm{\# int}(A)$ counts the number of sets of contiguous indices in the set $A$.  This set function is smallest on subsets with a small number of entries that are adjacent in index.

For experiments, we use the signal $\mathbf{x}\in\mathbb{R}^n_{\geq 0}$ shown in the top plot of Figure \ref{fig:DN_figs}, with the noise-corrupted measurements $\mathbf{x} + \mathbf{w} = \mathbf{y}\in\mathbb{R}^n$ with an example shown in dotted orange.  We then let $\mu = 0.8$ in \eqref{eq:DN_f} and $\lambda = 0.05$ in \eqref{eq:DN_g} so that the overall problem's cost function has nontrivial contributions from both the smoothness-promoting function and the sparsity-inducing regularizer.  In this case, for the continuous submodular algorithm we discretize the compact set $[0,1]^n\subseteq\mathbb{R}^n$ into $k=51$ distinct values per index.

We show the resulting denoised signals in the second, third, and fourth plots in Figure \ref{fig:DN_figs}, with the running time comparison over a small window of problem dimensions in the bottom right.  The discretization of the domain in the continuous submodular function minimization approach produces artifacts in the reconstructed signal, whereas the result of the projected subgradient and minimum-norm point algorithms are smoother with smaller sets of nonzero entries.  We see once more that our proposed minimum-norm point algorithm poses a compromise between speed and accuracy, providing guaranteed global optimality without the high running time of projected subgradient descent.  Moreover, when we use more specialized algorithms for each sub-problem, we achieve competitive performance with the continuous submodular minimization algorithm.

We also compare the objective value achieved during the iterations of each algorithm for a single instance in the bottom left plot of Figure \ref{fig:DN_figs} with $n=100$. Again, the minimum-norm point algorithm converges almost immediately to the minimum alongside the projected subgradient method, while the continuous submodular function minimization approach's discretization error prevents it from achieving full global optimality.

\begin{figure}
\begin{center}
\begin{subfigure}{0.95\linewidth}
	\includegraphics[width=\linewidth]{./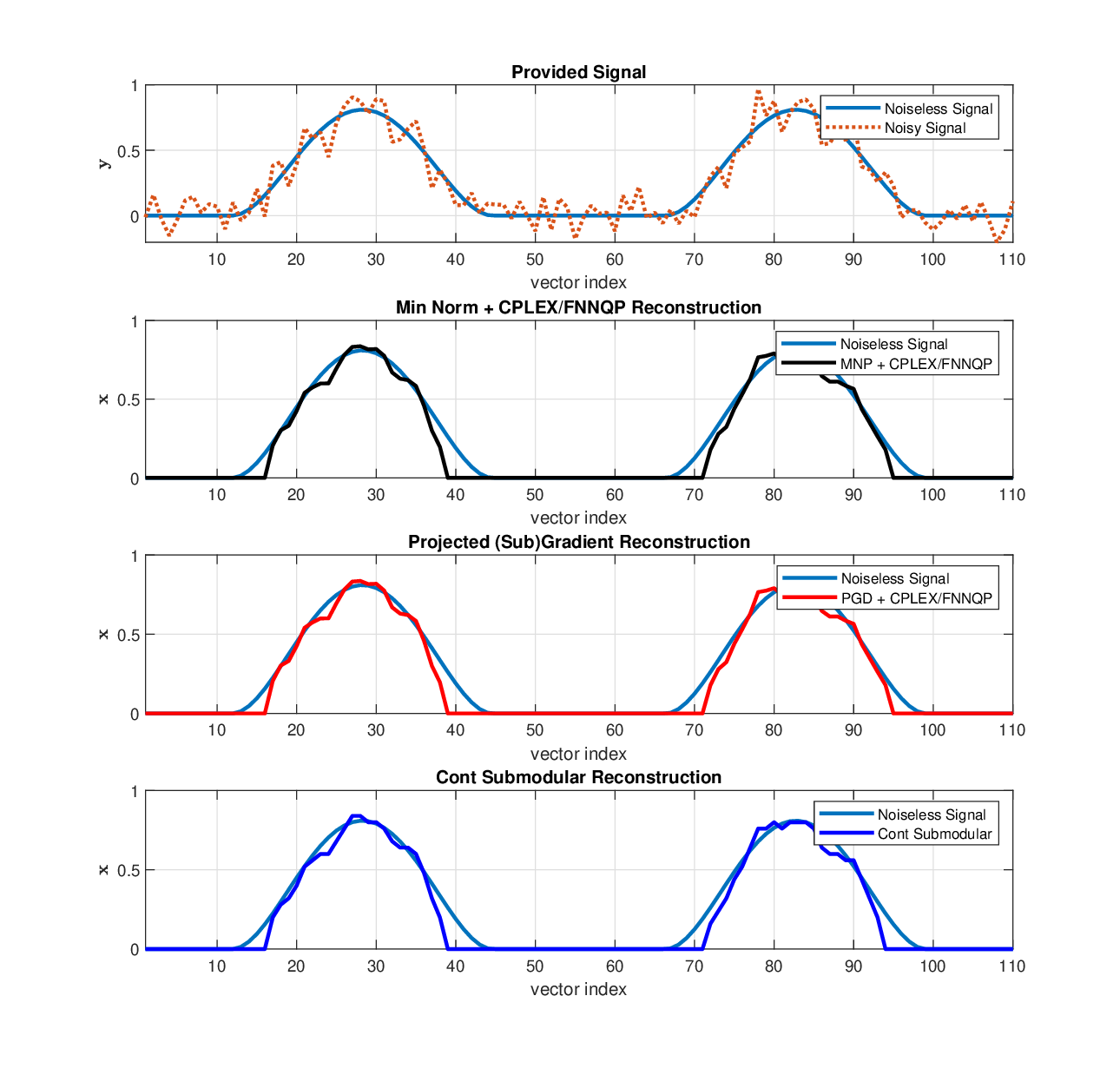}
	\vspace{-15mm}
\end{subfigure}\\
\begin{subfigure}{0.95\linewidth}
	\includegraphics[width=\linewidth]{./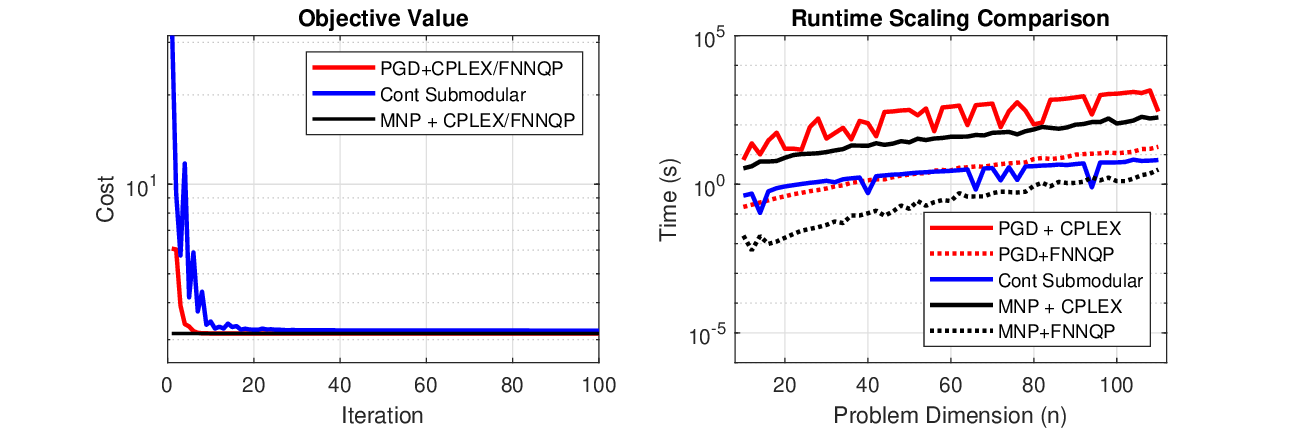}
\end{subfigure}
\end{center}
\caption{Results of the denoising problem simulations. The true signal and its noisy counterpart are shown in the top plot. The second, third, and fourth plots show the denoised signals produced by each of the three algorithms.  Note that the results from the minimum-norm point algorithm and the projected subgradient descent method are identical.  The bottom left plot shows the objective value across iterations for $n=100$, and bottom right shows the running times of each algorithm for a window of  problem dimensions.}
\label{fig:DN_figs}
\vspace{-3mm}
\end{figure}

\subsection{Price optimization with start-up costs}
In price optimization problems, we are asked to determine prices for a set of products that maximizes the expected profit while considering any inter-product demand effects caused by these prices \cite{ito2016large,ito2017optimization}.  Usually this process relies on a simple predictive model for the relationship between the price of an item and its demand, which we can easily derive with a regression technique.  Given a predictive model of the pricing-demand relationship and a characterization of our cost for each product, we want to determine the optimal pricing strategy that maximizes our profit.

Let $\mathbf{c}_i\in\mathbb{R}_{\geq 0}$ and $\mathbf{p}_i\in\mathbb{R}_{\geq 0}$ denote the cost and retail price per unit, respectively, of each item of each item $i=1,2,...,n$.  Let the function $d:\Rno\to\Rno$ be the predictive demand model, meaning that given a set of prices $\mathbf{p}$ it estimates the number of sales (or demand) of the products.  The estimated total profit of a pricing $\mathbf{p}$ can then be described by the function:
\begin{align}\label{eq:profit}
f(\mathbf{p}) &= \sum_{i=1}^n(\mathbf{p}_i-\mathbf{c}_i)d(\mathbf{p})_i.
\end{align}
Without loss of generality, we assume there is a minimum loss we are willing to accept for each item, meaning there is a lower bound $\underline{\mathbf{p}}\in\Rno$, and that if $\mathbf{p}_i=\underline{\mathbf{p}}_i$, we will not sell product $i$.

While the expression for profit \eqref{eq:profit} includes the cost of each item, it does not account for any start-up costs associated with providing them.  In particular, to provide an item, we may have to order it from a supplier and have it shipped to our facilities, paying various logistical fees to do so.  We pay these fees regardless of the \emph{quantity} of products, meaning they are a function purely of which items we choose to stock.  Moreover, in many cases these logistical costs are lumped together between items, such as when sourcing multiple products from the same supplier. 

More mathematically, assume we have $k\in\mathbb{Z}_{>0}$ groups of products with shared start-up costs, with each group represented as a subset $G_i\subseteq [n]$, each with some start-up cost $\mathbf{w}_i$.  Then the total incurred start-up costs of a subset of provided products $S$ can be expressed with a set function $g:\twon\to\R$:
\begin{align}
g(S) &= \sum_{\substack{k\in[n] \\ S\cap G_k \neq \emptyset}} \mathbf{w}_i.
\end{align}
We apply this set function to the set of products we choose to sell, $\supp{\mathbf{p}-\underline{\mathbf{p}}}\subseteq [n]$.  In this work, without loss of generality we let $\underline{\mathbf{p}} = 0$, which implies that an item priced at $\mathbf{p}_i=\underline{\mathbf{p}}_i$ earns no reward and also has no impact on the demand of the other products.  By carefully defining the demand model $d$ and costs $c$, we can enforce this property for any desired minimum price $\underline{\mathbf{p}}$.

The true underlying demand model $d$ is unknown in practice.  In a small time window, however, we can use historical data to build a local linear approximation for it, $\hat{d}:\Rno\to\Rno$:
\begin{align*}
\hat{d}(\mathbf{p}) &= \bbeta\mathbf{p} + \balpha,
\end{align*}
with $\bbeta\in\mathbb{R}^{n\times n}$ and $\balpha\in\mathbb{R}^n$.  The entries $\bbeta_{ij}$ describe the impact that the price of product $i$ has on the demand for product $j$, sometimes referred to as the \emph{elasticity of demands}\cite{ito2016large,ito2017optimization}.  Using this model, the estimated expected profit \eqref{eq:profit} is a quadratic function:
\begin{align*}
f(\mathbf{p}) &= \sum_{i=1}^n(\mathbf{p}_i-\mathbf{c}_i)\hat{d}(\mathbf{p})_i = \mathbf{p}^T\bbeta\mathbf{p} + \mathbf{p}^T(\balpha-\bbeta^T\mathbf{c}) - \mathbf{c}^T\balpha.
\end{align*}
Combining the expected profits with the start-up costs, we are faced with the optimization problem:
\begin{align}\label{eq:pricing_problem}
\begin{array}{cl}
\minimize{\mathbf{p}} & -\mathbf{p}^T\bbeta\mathbf{p} - \mathbf{p}^T(\balpha-\bbeta^T\mathbf{c}) + \mathbf{c}^T\balpha + g\left(\supp{\mathbf{p}-\underline{\mathbf{p}}}\right) \\
\text{subject to} & \mathbf{p} \geq \underline{\mathbf{p}}.
\end{array}
\end{align} 

We create this scenario with real retail sales data collected from a UK-based online retail store available in the UCI Machine Learning Repository \cite{dua2017uci,chen2012data}.  We use this data to estimate the matrix $\bbeta\in\mathbb{R}^{n\times n}$ and vector $\balpha\in\mathbb{R}^{n}$ with simple ridge regression.  To make the pricing problem \eqref{eq:pricing_problem} well-posed, we also enforce a weak diagonal dominance constraint on $\bbeta$.  In addition to making the problem well-posed, this constraint enforces the intuition that the most relevant factor in each product's demand is its own prices.

Even with a diagonal dominance constraint, the cross-terms $\bbeta_{ij}$ with $i\neq j$ can easily be either positive or negative, depending on the demand and price relationships of the products.  As a result, we cannot directly apply our parameterization method.  We can, however, use the quadratic structure of \eqref{eq:pricing_problem} and follow the results of Section \ref{sec:quadratic_lifting} to lift the pricing problem into a new quadratic problem amenable to our parameterization approach.

We compare our parameterization approach to solving \eqref{eq:pricing_problem} against the projected subgradient descent method applied directly to the original quadratic program for 100 iterations. This algorithm gives near-optimality guarantees, but explicitly computing the associated bound is NP-Hard.  Alternatively, our quadratic lifting approach gives an easily computable additive suboptimality guarantee in Lemma \ref{lem:bound} at the cost of solving a larger problem instance.  This trade-off is highlighted in the plot of running times across varying problem sizes and the achieved cost across over iterations of each algorithm for an instance of $n=20$ in Fig. \ref{fig:lifting_figs}.

We could also, in principle, use the continuous submodular minimization algorithm to solve the lifted quadratic problem.  However, this approach will still suffer inaccuracy from the discretization step, and further, runs slower than the other algorithms that take advantage of the quadratic problem structure.

\begin{figure}
\begin{center}
\begin{subfigure}{.4\textwidth}
	\includegraphics[width=1.0\linewidth]{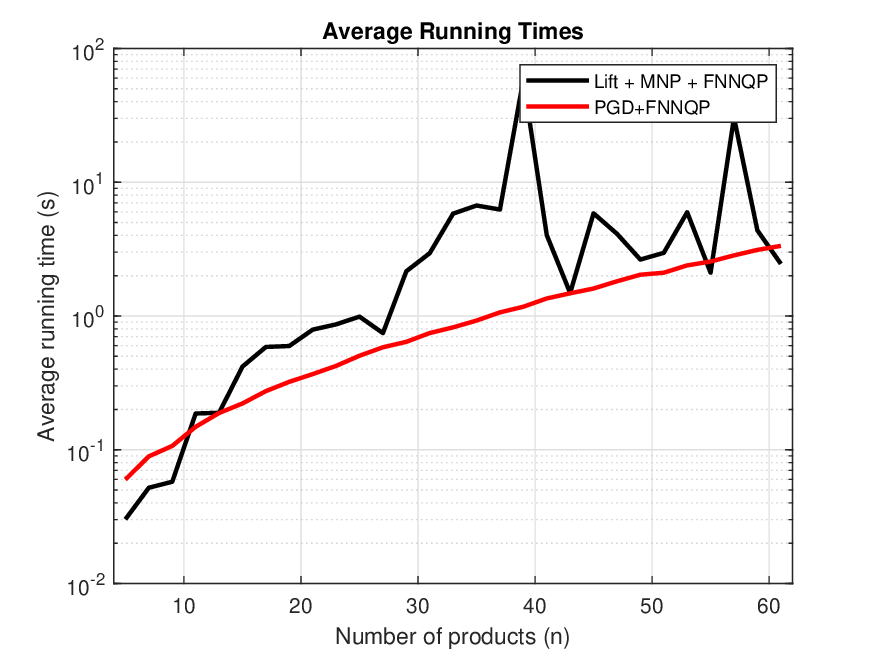}
	\vspace{-6mm}
	\label{fig:lift_runtimes}
\end{subfigure}%
\begin{subfigure}{.4\textwidth}
	\includegraphics[width=1.0\linewidth]{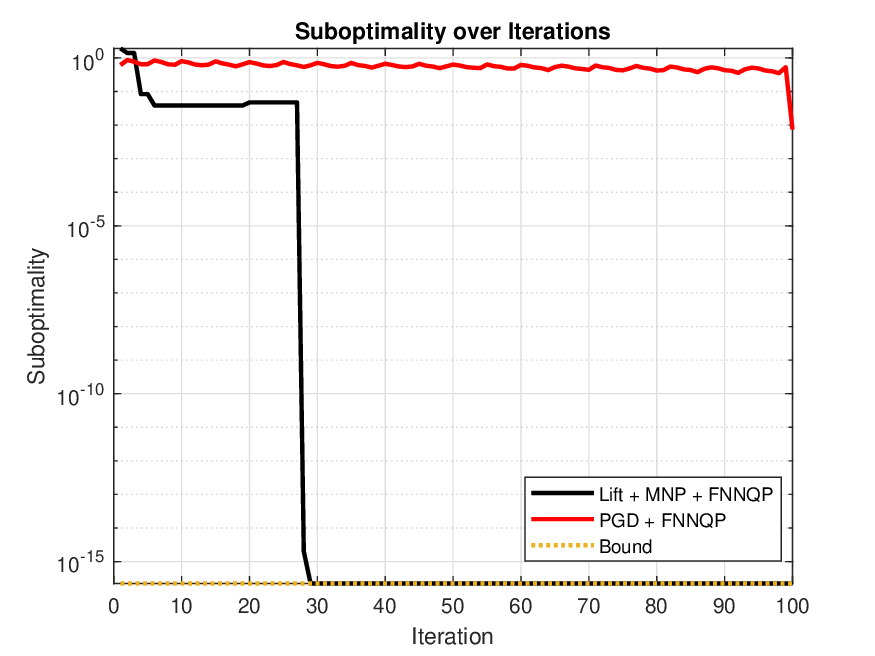}
	\vspace{-6mm}
	\label{fig:lift_costs}
\end{subfigure}
\end{center}
\caption{Results of the price optimization problem simulations.  We show the running times of each algorithm for various problem sizes (left) and the achieved cost across iterations of the algorithms for a problem of size $n=20$ (right).  The dotted line below indicates the guaranteed lower bound on the optimal solution provided by our lift.}
\label{fig:lifting_figs}
\vspace{-3mm}
\end{figure}

\subsection{Discretization Error Dependence}
In this section, we explore the relationship between the continuous submodular function minimization algorithm's discretization error and its running time.  To this end, we ran instances of the sparse regression example with the modified range function penalty, using a discretization resolution in each dimension ranging from $k=50$ to $k=400$.

The minimum cost achieved at each discretization level $k$ is shown in the left plot of Figure \ref{fig:k_comp}.  Similarly, the associated running times of the algorithm are shown in the right-hand plot of Figure \ref{fig:k_comp}.  Interestingly, near the value of $k = 250$, the achieved cost becomes effectively optimal, but the running time increases by an order of magnitude.

\begin{figure}
\begin{center}
\begin{subfigure}{.4\textwidth}
	\includegraphics[width=1.0\linewidth]{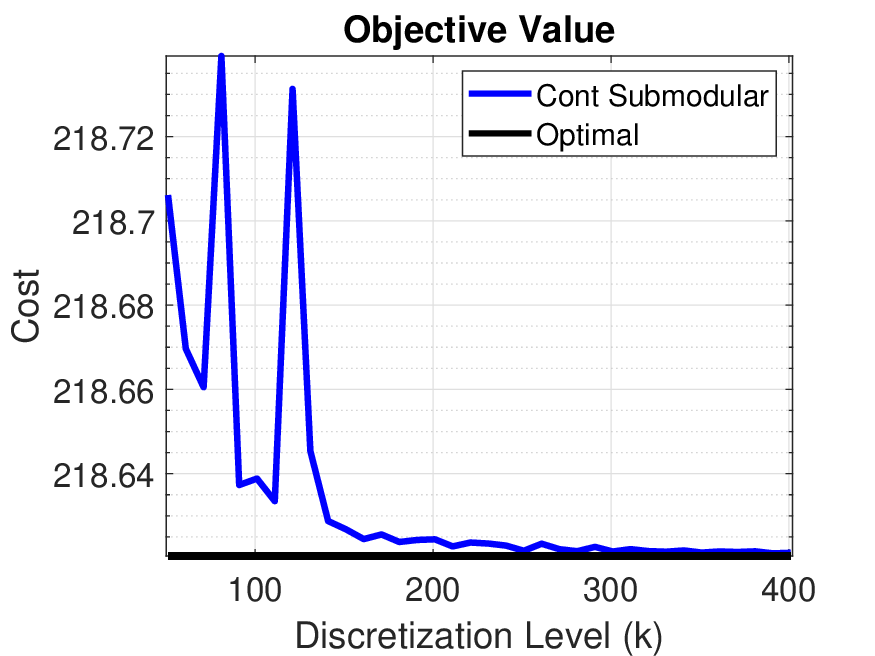}
	\vspace{-6mm}
	\label{fig:k_vs_optimality}
\end{subfigure}%
\begin{subfigure}{.4\textwidth}
	\includegraphics[width=1.0\linewidth]{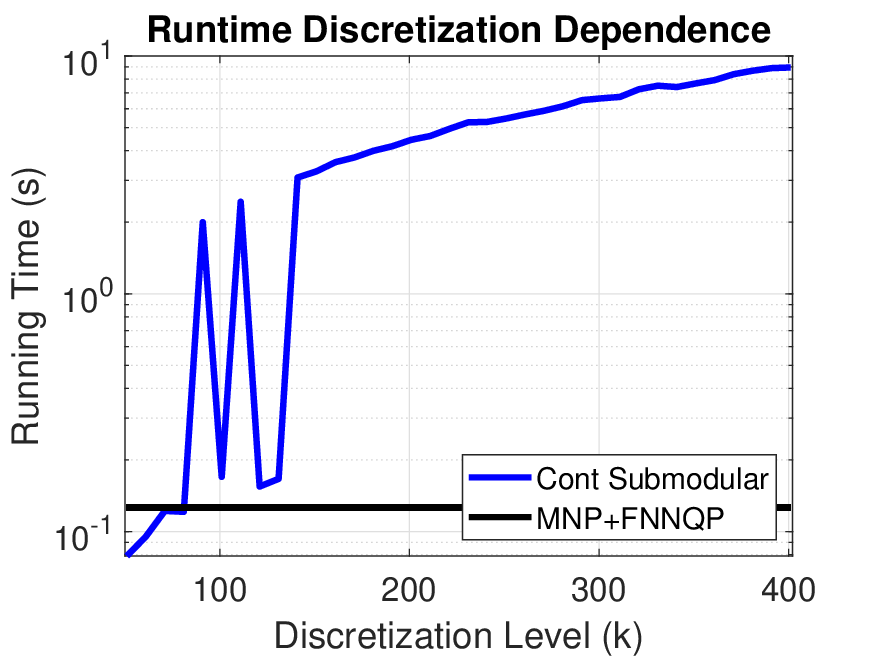}
	\vspace{-6mm}
	\label{fig:k_vs_runtime}
\end{subfigure}
\end{center}
\caption{Results highlighting the role of the discretization resolution $k$ on the continuous submodular algorithm's optimality (left) and running times (right) in an instance of the sparse regression problem with $n=100$.}
\label{fig:k_comp}
\vspace{-3mm}
\end{figure}

To give a coarse estimate on the origin of higher running times for projected subgradient descent and the minimum-norm point algorithms, we note that the computational cost of each iteration is dominated by the cost of computing the Lov\`asz extension of $H$.  This computation has time complexity $O(n\log n + n EO)$, where $EO$ is the complexity of evaluating $H$.  If $H$ is evaluated through convex optimization, many generic interior-point methods have time  complexity that is approximately $EO = O(n^3)$.  Therefore, each iteration of the minimum-norm point algorithm and the projected subgradient descent algorithm might have complexity on the order of $O(n\log n + n^4)$.  When using the fast non-negative quadratic programming algorithm, however, each evaluation operation is typically much lower than the generic $O(n^3)$.  Moreover, the lattice reduction technique of \cite{iyer2013fast} runs in approximately $O(n^2)$, and reduces the problem size drastically in many problems, as seen above.

\section{Conclusions}
In this work, we showed that model-fitting problems with structure-promoting regularizers could be expressed as optimization problems defined over two connected lattices. Using submodularity theory, we derived conditions on these functions and their domains under which we can directly solve these problems exactly and efficiently.  We focused on continuous and Boolean lattices, and derived conditions under which an agnostic combination of submodular set function minimization and convex optimization algorithms can compute the exact solution in polynomial time.

We then extended this theory to handle optimization problems with simple continuous or discrete budget constraints on the model parameters.  We did this by naively adding the constraint to the cost with a Lagrange multiplier, but then used submodular function theory to solve for all possible Lagrange multiplier values with a single convex optimization problem.  We also highlighted robust or adversarial optimization scenarios, where our exact solutions could provide subgradients to be used in globally convergent ascent methods.

Finally, we acknowledged there may be scenarios where our sufficient conditions are violated, and sought a way to weaken them without sacrificing our algorithm-agnostic approach.  To do so, we identified a class of quadratic programming problems that can be lifted to problems satisfying our conditions.  We then proved that the solutions of the lifted problem--which can then be found in polynomial time using our previously developed techniques--give provably optimal or near-optimal solutions to the original problem.  Moreover, the additive approximation bound we provide is simple to compute, unlike existing guarantees in literature that involve constants that are NP-Hard to compute.


\newpage
\newpage
\bibliographystyle{alpha}
\bibliography{cyphy_references}
\newpage
\appendix
\section{Submodularity, Lattice Morphisms, and Least Squares}\label{apdx:NNLS}
There is a massive body of work that identifies conditions under which compressed sensing problems of the form:
\begin{align}
\minimize{\mathbf{x}\in\mathbb{R}^n}~\Vert\mathbf{A}\mathbf{x}-\mathbf{b}\Vert_2^2 + \vert\supp{\mathbf{x}}\vert,\label{eq:ols_apdx}
\end{align}
for $\mathbf{A}\in\mathbb{R}^{m\times n}$ (with normalized unit norm columns, without loss of generality) and $\mathbf{b}\in\mathbb{R}^m$ can be efficiently solved by a convex relaxation of the $\ell_0$ pseudo-norm to the $\ell_1$ norm:
\begin{align*}
\minimize{\mathbf{x}\in\mathbb{R}^n}~\Vert\mathbf{A}\mathbf{x}-\mathbf{b}\Vert_2^2 + \Vert\mathbf{x}\Vert_1,
\end{align*}
with $\Vert\mathbf{x}\Vert_1 = \sum_{i=1}^n\vert\mathbf{x}_i\vert$.  The majority of these conditions rely on the matrix $\mathbf{A}$ being ``close to an isometry'', or ``nearly orthogonal''.  In this appendix, we highlight how these near-orthognality conditions on the matrix $\mathbf{A}$ can be related to the assumptions made in this work.

Interestingly, any least-squares problem in the form of \eqref{eq:ols_apdx} can be written as a least-squares problem over $\mathbb{R}^n_{\geq 0}$, by considering auxiliary variables:
\begin{align*}
\mathbf{x} &= \mathbf{x}^+ - \mathbf{x}^-, \quad \mathbf{x}^+,\mathbf{x}^-\in\mathbb{R}^n_{\geq 0}.
\end{align*}
Using these new variables, the least squares problem \eqref{eq:ols_apdx} becomes:
\begin{align*}
\minimize{\mathbf{x}^+,\mathbf{x}^-\in\mathbb{R}^n_{\geq 0}}~\left\Vert\begin{bmatrix}
\mathbf{A} & -\mathbf{A}
\end{bmatrix}\begin{bmatrix}
\mathbf{x}^+ \\
\mathbf{x}^-
\end{bmatrix}
-
\mathbf{b}\right\Vert_2^2 + \vert\supp{\mathbf{x}^+-\mathbf{x}^-}\vert.
\end{align*}
If we assume (without loss of generality) that at most one of $\mathbf{x}^+_i$ or $\mathbf{x}^-_i$ are nonzero for each $i = 1,2,...,n$, then we can equivalently write:
\begin{align*}
\minimize{\mathbf{x}^+,\mathbf{x}^-\in\mathbb{R}^n_{\geq 0}}~\begin{bmatrix}
\mathbf{x}^+ \\ \mathbf{x}^-
\end{bmatrix}^T&\begin{bmatrix}
\mathbf{A}^T\mathbf{A} & -\mathbf{A}^T\mathbf{A} \\
-\mathbf{A}^T\mathbf{A} & \mathbf{A}^T\mathbf{A}
\end{bmatrix}\begin{bmatrix}
\mathbf{x}^+\\\mathbf{x}^-
\end{bmatrix} - 2\mathbf{b}^T\begin{bmatrix}
\mathbf{A} & -\mathbf{A}
\end{bmatrix}\begin{bmatrix}
\mathbf{x}^+ \\ \mathbf{x}^-
\end{bmatrix}\\
&\quad+\vert\supp{\mathbf{x}^+}\vert + \vert\supp{\mathbf{x}^-}\vert.
\end{align*}
In this lifted problem, Assumption 1 states that the cost function must be submodular on $\mathbb{R}^n_{\geq 0}\times\mathbb{R}^n_{\geq 0}$.  For our lifted problem's cost function, this assumption is equivalent to the condition:
\begin{align*}
\left(\mathbf{A}^T\mathbf{A}\right)_{ij}&\leq 0,\quad \4all i\neq j \\
-\left(\mathbf{A}^T\mathbf{A}\right)_{ij}&\leq 0,\quad\4all i,j.
\end{align*}
This set of conditions in turn implies that $\left(\mathbf{A}^T\mathbf{A}\right)_{ii} \geq 0$ for all $i$, which is always satisfied, but also that $\left(\mathbf{A}^T\mathbf{A}\right)_{ij} = 0$ for all $i\neq j$.

By this analysis, any arbitrary least-squares problem with a monotone subset penalty can be converted to a nonnegative least-squares problem satisfying Assumptions 1-3 and the required convexity for Theorem \ref{thm:main_result} if $\mathbf{A}$ is orthogonal.  The nearness of the matrix $\mathbf{A}$ to satisfying this condition is often measured with the notion of its \emph{coherence}:
\begin{align*}
\underset{i\neq j}{\max} \left(\mathbf{A}^T\mathbf{A}\right)_{ij},
\end{align*}
which is commonly used to identify well-structured instances of least-squares problems \cite{rauhut2010compressive}.

\section{Continuous Budget Constraints}\label{apdx:continuous_constraints}
In this appendix, we prove the relevant results for continuous budget constraints.  We let $f_i:\mathbb{R}_{\geq 0}\rightarrow\mathbb{R}$ and $W_i:\mathbb{R}_{\geq 0}\rightarrow\mathbb{R}$ be continuous functions such that $f_i(0) = W_i(0) = 0$ for all $i=1,2,...,n$.  We further assume that each $W_i$ is strictly increasing for each $i$.  Then define the function $H_i:\mathbb{R}_{\geq 0}\rightarrow\mathbb{R}_{\leq 0}$:
\begin{align}
H_i(\alpha) &= \underset{\mathbf{z}\geq 0}{\min}~f_i(\mathbf{z}) + \alpha W_i(\mathbf{z}).\label{eq:h_i_scalar}
\end{align}
We first note that $H_i$ is monotone in $\alpha$.
\begin{proposition}\label{prop:increasing}
The function $H_i:\mathbb{R}_{\geq 0}\rightarrow\mathbb{R}_{\leq 0}$ is monotone in $\alpha$ for all $i=1,2,...,n$.  It is strictly increasing for all $\alpha\in[0,c]$, where $c\in\mathbb{R}_{\geq 0}$ is the smallest constant such that $H_i(c) = 0$.  Additionally, $H_i$ is constant and zero on the interval $[c,\infty[$.
\end{proposition}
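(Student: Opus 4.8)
The plan is to exploit that, for each fixed $z\ge 0$, the map $\alpha\mapsto f_i(z)+\alpha W_i(z)$ is affine with slope $W_i(z)$, and that this slope is nonnegative --- indeed $W_i(z)>0$ for $z>0$ --- because $W_i$ is strictly increasing with $W_i(0)=0$. Two observations follow immediately. First, $H_i$ is a pointwise infimum of affine functions of $\alpha$, hence concave, and therefore continuous on the open interval $(0,\infty)$. Second, $H_i$ is a pointwise infimum of nondecreasing functions of $\alpha$, hence itself nondecreasing, which already gives the monotonicity claim. Taking $z=0$ also yields $H_i(\alpha)\le f_i(0)+\alpha W_i(0)=0$ for every $\alpha$, which is why the codomain can be written as $\mathbb{R}_{\le 0}$.

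Next I would pin down the zero set $Z=\{\alpha\ge 0 : H_i(\alpha)=0\}$. If $H_i(\alpha_0)=0$, then monotonicity gives $H_i(\alpha)\ge H_i(\alpha_0)=0$ for all $\alpha\ge\alpha_0$, while $H_i(\alpha)\le 0$ always, so $H_i(\alpha)=0$ there; thus $Z$ is an up-set. Using continuity of $H_i$ (equivalently, taking as part of the hypothesis that a smallest such constant exists), $Z=[c,\infty[$ with $H_i(c)=0$, which is precisely the assertion that $H_i$ is constant and zero on $[c,\infty[$. In the degenerate case $Z=\emptyset$ one puts $c=+\infty$, and the remaining claim is strict increase on all of $[0,\infty[$.

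The substance is strict monotonicity on $[0,c]$. Fix $0\le\alpha<\alpha'\le c$. If $\alpha'=c<\infty$, then $H_i(\alpha')=0$ while $\alpha\notin Z$ forces $H_i(\alpha)<0=H_i(\alpha')$, and we are done. If $\alpha'<c$, then $H_i(\alpha')<0$, and I argue by contradiction: suppose $H_i(\alpha)=H_i(\alpha')=:v<0$ (monotonicity already gives ``$\le$''). For each $\delta>0$ choose $z_\delta\ge 0$ with $f_i(z_\delta)+\alpha' W_i(z_\delta)<v+\delta$. Since $v=H_i(\alpha)\le f_i(z_\delta)+\alpha W_i(z_\delta)=\bigl(f_i(z_\delta)+\alpha' W_i(z_\delta)\bigr)-(\alpha'-\alpha)W_i(z_\delta)$, we get $(\alpha'-\alpha)W_i(z_\delta)<\delta$, so $W_i(z_\delta)\to 0$; since $W_i$ is strictly increasing and continuous with $W_i(0)=0$, this forces $z_\delta\to 0$, and then $f_i(0)=W_i(0)=0$ together with continuity of $f_i$ gives $f_i(z_\delta)+\alpha' W_i(z_\delta)\to 0$. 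But that quantity is $<v+\delta$ for every $\delta$, so letting $\delta\to 0$ yields $0\le v$, contradicting $v<0$. Hence $H_i(\alpha)<H_i(\alpha')$, which completes the argument.

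I expect the main obstacle to be exactly this last step: the infimum-of-affine structure only yields the non-strict inequality, and promoting it to strict monotonicity requires showing the (approximately) minimizing $z$ stays bounded away from $0$ whenever $H_i$ is strictly negative --- which is where strict monotonicity and continuity of $W_i$, together with the normalization $f_i(0)=W_i(0)=0$, are genuinely used. A secondary technical point is whether the minimum in \eqref{eq:h_i_scalar} is attained and whether $Z$ is nonempty; I would avoid the attainment issue altogether by working with $\delta$-minimizers as above, and dispatch the empty-$Z$ case by the convention $c=+\infty$.
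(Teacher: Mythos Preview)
Your argument is correct. The overall architecture---$H_i\le 0$ via the feasible point $z=0$, monotonicity from the pointwise-infimum-of-nondecreasing-affines structure, the up-set characterization of the zero set, and strict monotonicity below $c$---matches the paper's proof in spirit, but your execution differs in two respects worth noting.

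First, the paper's proof simply writes $\mathbf{z}^\alpha\in\argmin{\mathbf{z}\ge 0}\,f_i(\mathbf{z})+\alpha W_i(\mathbf{z})$ and argues directly with these minimizers: since $W_i(\mathbf{z}^\beta)>0$ whenever $H_i(\beta)<0$, the chain $H_i(\alpha)\le f_i(\mathbf{z}^\beta)+\alpha W_i(\mathbf{z}^\beta)<f_i(\mathbf{z}^\beta)+\beta W_i(\mathbf{z}^\beta)=H_i(\beta)$ gives strictness in one line. This is shorter but tacitly assumes the infimum is attained, which is not guaranteed by the standing hypotheses (continuity of $f_i,W_i$ on the noncompact domain $[0,\infty)$). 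Your $\delta$-minimizer argument sidesteps this attainment issue entirely and is therefore more robust; the price is the small extra work of showing $W_i(z_\delta)\to 0\Rightarrow z_\delta\to 0$ via strict monotonicity and continuity of $W_i$.

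Second, your concavity observation (infimum of affine functions) buys you continuity of $H_i$ on $(0,\infty)$, which you use to conclude that $c=\inf Z$ actually lies in $Z$. The paper does not make this step explicit; it simply asserts that once $H_i(c)=0$ for some $c$, monotonicity and the bound $H_i\le 0$ force $H_i\equiv 0$ on $[c,\infty[$, leaving the existence of a \emph{smallest} such $c$ to the statement of the proposition. Your treatment of the degenerate case $Z=\emptyset$ with $c=+\infty$ is also a clean addition.
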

\begin{proof}
Consider $\alpha,\beta\in\mathbb{R}_{\geq 0}$, with $\alpha \leq\beta$, and define the points $\mathbf{z}^\alpha \in\mathbb{R}_{\geq 0}$ and $\mathbf{z}^\beta\in\mathbb{R}_{\geq 0}$ as:
\begin{align*}
\mathbf{z}^\alpha &\in\argmin{\mathbf{z}\geq 0}f_i(\mathbf{z})+\alpha W_i(\mathbf{z}), \\
\mathbf{z}^\beta &\in \argmin{\mathbf{z}\geq 0}f_i(\mathbf{z}) + \beta W_i(\mathbf{z}).
\end{align*}
Note that for any $\alpha\in\mathbb{R}_{\geq 0}$, because $\mathbf{z} = 0$ is a feasible point in the minimization defined in \eqref{eq:h_i_scalar}:
\begin{align*}
H_i(\alpha) &= \underset{\mathbf{z}\geq 0}{\min}~f_i(\mathbf{z}) + \alpha W_i(\mathbf{z})\\
&\leq f_i(0) + \alpha W_i(0) = 0,
\end{align*}
thus $H_i$ is bounded above by zero.  Moreover, observe that by optimality of $\mathbf{z}^\alpha$:
\begin{align*}
H_i(\alpha) &= f_i(\mathbf{z}^\alpha) + \alpha W_i(\mathbf{z}^\alpha) \leq f_i(\mathbf{z}) + \alpha W_i(\mathbf{z}),\quad \4all \mathbf{z}\geq 0.
\end{align*}
Moreover, because $W_i(0) = 0$ and $W_i$ is increasing, $W_i(\mathbf{z}) \geq 0$.  Then, because $\alpha \leq \beta$:
\begin{align*}
H_i(\alpha) &= f_i(\mathbf{z}^\alpha) + \alpha W_i(\mathbf{z}^\alpha) \\
&\leq f_i(\mathbf{z}) + \alpha W_i(\mathbf{z}) \\
&\leq f_i(\mathbf{z}) + \beta W_i(\mathbf{z}), \quad \4all \mathbf{z}\geq 0.
\end{align*}
This inequality is strict when $\alpha < \beta$ and $W_i(\mathbf{z}^\alpha) \neq 0$, or equivalently $H_i(\alpha) < 0$.  In particular, because $\mathbf{z}^\beta \geq 0$:
\begin{align*}
H_i(\alpha) \leq f_i(\mathbf{z}^\beta) + \beta W_i(\mathbf{z}^\beta) = H_i(\beta),
\end{align*}
with strict inequality when $H_i(\alpha) < 0$.  Therefore $H_i$ is monotone and strictly increasing for all $\alpha\in\mathbb{R}_{\geq 0}$ such that $H_i(\alpha) < 0$.  Because it is also bounded above by zero, monotonicity implies that once $H_i(c) = 0$ for some $c\in\mathbb{R}_{\geq 0}$, it is zero for all $\beta \geq c$.
\end{proof}
Let $g:2^{[n]}\rightarrow\mathbb{R}$ be a monotone submodular set function, and consider a family of optimization problems parameterized by $\mu\in\mathbb{R}_{\geq 0}$:
\begin{align}
\minimize{A\in 2^{[n]}}~g(A) + \sum_{i\in A}H_i(\mu).\label{eq:family_probs}
\end{align}

Given Proposition \ref{prop:increasing}, we know that $H_i(0) \leq 0$ for all $i=1,2,...,n$.  If there exists an $i\in[n]$ such that $H_i(0) = 0$, Proposition \ref{prop:increasing} further states that $H_i(\alpha)$ is also zero for all $\alpha \geq 0$.  Moreover, because $g$ is monotone, we know:
\begin{align*}
g(A) + \sum_{i\in A}H_i(\alpha) &= g(A) + \sum_{i\in A\setminus\{j\}}H_i(\alpha) \\
&\geq g(A\setminus\{j\}) + \sum_{i\in A\setminus\{j\}}H_i(\alpha).
\end{align*}
In words, because $g$ is monotone and $H_i(\alpha)$ is zero for all $\alpha$, we can always reduce the cost of a subset by removing $i$.  Equivalently, we can simply remove $i$ from the ground set of elements.

We then follow the analysis in \cite{bach2013learning}, generalizing as needed to accommodate for the non-strict monotonicity of $H_i$.

\begin{proposition}(Proposition 8.2 in \cite{bach2013learning})
Let $A^\alpha$ and $A^\beta$ be minimal (i.e., smallest in size) minimizers for \eqref{eq:family_probs} with respective parameters $\alpha$ and $\beta$, with $\alpha < \beta$.  Then $A^\beta\subseteq A^\alpha$.
\end{proposition}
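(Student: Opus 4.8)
The statement is a Topkis-type monotonicity result, specialized to the parametric submodular minimization $\min_{A\in 2^{[n]}} F_\mu(A)$, where I abbreviate $F_\mu(A) = g(A) + \sum_{i\in A} H_i(\mu)$ and $m_\mu(S) = \sum_{i\in S} H_i(\mu)$. Since $S\mapsto m_\mu(S)$ is modular, $F_\mu$ is submodular for every fixed $\mu$; as the ground set $[n]$ is finite, minimizers and minimal (smallest-cardinality) minimizers exist. The plan is the classical exchange argument comparing the values of $F_\alpha$ and $F_\beta$ at the four sets $A^\alpha$, $A^\beta$, $A^\alpha\cup A^\beta$, $A^\alpha\cap A^\beta$, together with the modular identities $m_\mu(A^\alpha\cup A^\beta) = m_\mu(A^\alpha) + m_\mu(A^\beta\setminus A^\alpha)$ and $m_\mu(A^\alpha\cap A^\beta) = m_\mu(A^\beta) - m_\mu(A^\beta\setminus A^\alpha)$.

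First I would test optimality of $A^\alpha$ at parameter $\alpha$ against the competitor $A^\alpha\cup A^\beta$, which after cancelling $m_\alpha(A^\alpha)$ gives $g(A^\alpha\cup A^\beta) \ge g(A^\alpha) - m_\alpha(A^\beta\setminus A^\alpha)$; symmetrically, optimality of $A^\beta$ at $\beta$ against $A^\alpha\cap A^\beta$ gives $g(A^\alpha\cap A^\beta) \ge g(A^\beta) + m_\beta(A^\beta\setminus A^\alpha)$. Adding these two inequalities and applying submodularity of $g$ in the form $g(A^\alpha\cup A^\beta) + g(A^\alpha\cap A^\beta) \le g(A^\alpha) + g(A^\beta)$, all $g$-terms cancel and what remains is $m_\beta(A^\beta\setminus A^\alpha) \le m_\alpha(A^\beta\setminus A^\alpha)$, i.e. $\sum_{i\in A^\beta\setminus A^\alpha} H_i(\beta) \le \sum_{i\in A^\beta\setminus A^\alpha} H_i(\alpha)$. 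Because $\alpha < \beta$ and each $H_i$ is nondecreasing in its argument (Proposition \ref{prop:increasing}), the opposite inequality holds termwise, forcing $H_i(\alpha) = H_i(\beta)$ for every $i \in A^\beta\setminus A^\alpha$.

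To close, I would invoke the sharper part of Proposition \ref{prop:increasing}: each $H_i$ is strictly increasing on $[0,c_i]$ and identically zero on $[c_i,\infty)$, so the equality $H_i(\alpha) = H_i(\beta)$ with $\alpha < \beta$ is only possible when $c_i \le \alpha$, in which case $H_i(\alpha) = H_i(\beta) = 0$. Hence $m_\beta(A^\beta\setminus A^\alpha) = 0$, so $m_\beta(A^\alpha\cap A^\beta) = m_\beta(A^\beta)$, and by monotonicity of $g$ we get $F_\beta(A^\alpha\cap A^\beta) = g(A^\alpha\cap A^\beta) + m_\beta(A^\beta) \le g(A^\beta) + m_\beta(A^\beta) = F_\beta(A^\beta)$. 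Thus $A^\alpha\cap A^\beta$ is also a minimizer of $F_\beta$; if $A^\beta\setminus A^\alpha$ were nonempty, it would be a strictly smaller minimizer than $A^\beta$, contradicting minimality of $A^\beta$. Therefore $A^\beta\setminus A^\alpha = \emptyset$, i.e. $A^\beta\subseteq A^\alpha$.

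I expect the only genuine obstacle to be the failure of strict monotonicity of the $H_i$: they may plateau at the value $0$, so unlike the textbook Topkis situation one cannot conclude $A^\beta\setminus A^\alpha = \emptyset$ directly from $H_i(\alpha) = H_i(\beta)$. Handling this degeneracy is exactly what forces the statement to be about \emph{minimal} minimizers (it is $A^\beta$'s minimality that is invoked) and to use monotonicity of $g$; in the all-strict case the last paragraph collapses to a one-line contradiction. The remaining steps are routine manipulation of modular functions, so I would not belabor them.
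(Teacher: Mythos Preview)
Your proposal is correct and follows essentially the same route as the paper: compare $A^\alpha$ with $A^\alpha\cup A^\beta$ at parameter $\alpha$, compare $A^\beta$ with $A^\alpha\cap A^\beta$ at parameter $\beta$, add, apply submodularity of $g$, and obtain $\sum_{i\in A^\beta\setminus A^\alpha}\bigl(H_i(\beta)-H_i(\alpha)\bigr)\le 0$, then handle the plateau $H_i(\alpha)=H_i(\beta)=0$ via monotonicity of $g$ and minimality. The only cosmetic difference is in the endgame: the paper removes a single offending index $i$ from $A^\beta$ to contradict minimality, whereas you pass directly to $A^\alpha\cap A^\beta$ as a strictly smaller minimizer of $F_\beta$; both arguments are equivalent and rely on exactly the same ingredients.
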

\begin{proof}
By the optimality of $A^\alpha$ and $A^\beta$, we have:
\begin{align}
g(A^\alpha) + \sum_{i\in A^\alpha}H_i(\alpha) &\leq g(A^\alpha\cup A^\beta) + \sum_{i\in A^\alpha\cup A^\beta} H_i(\alpha)\label{eq:opt_a_alpha} \\
g(A^\beta) + \sum_{i\in A^\beta}H_i(\beta) &\leq g(A^\alpha\cap A^\beta) + \sum_{i\in A^\alpha\cap A^\beta} H_i(\beta).\label{eq:opt_a_beta}
\end{align}
If we sum these inequalities and apply the submodularity of $g$, we have:
\begin{align}
g(A^\alpha\cup A^\beta) +  g(A^\alpha\cap A^\beta) &+ \sum_{i\in A^\alpha\cup A^\beta} H_i(\alpha) +\sum_{i\in A^\alpha\cap A^\beta} H_i(\beta)\nonumber\\
&\geq g(A^\alpha) + g(A^\beta) + \sum_{i\in A^\alpha}H_i(\alpha) + \sum_{i\in A^\beta}H_i(\beta)\nonumber \\
&\geq g(A^\alpha\cup A^\beta) + g(A^\alpha\cap A^\beta) + \sum_{i\in A^\alpha}H_i(\alpha) + \sum_{i\in A^\beta}H_i(\beta).\label{eq:sub_a_alpha_beta}
\end{align}
Subtracting equations \eqref{eq:opt_a_alpha} and \eqref{eq:opt_a_beta} from \eqref{eq:sub_a_alpha_beta}, we have:
\begin{gather}
\sum_{i\in A^\alpha\cup A^\beta} H_i(\alpha) +\sum_{i\in A^\alpha\cap A^\beta} H_i(\beta) \geq  \sum_{i\in A^\alpha}H_i(\alpha) + \sum_{i\in A^\beta}H_i(\beta) \nonumber\\
\Rightarrow \sum_{i \in A^\beta \setminus A^\alpha}\left[H_i(\beta) - H_i(\alpha)\right] \leq 0.\label{eq:diff_terms}
\end{gather}
By Proposition \ref{prop:increasing}, as $\alpha < \beta$, each $H_i(\beta)-H_i(\alpha)$ in the summation \eqref{eq:diff_terms} is strictly positive, or $H_i(\alpha) = H_i(\beta) = 0$.  But if $H_i(\alpha) = H_i(\beta) = 0$, as $g$ is monotone, we may remove $i$ from both $A^\alpha$ and $A^\beta$ and decrease the cost in \eqref{eq:family_probs}, contradicting the minimality of $A^\alpha$ and $A^\beta$.

By this argument, the left-hand side of inequality \eqref{eq:diff_terms} is the sum of strictly positive terms.  However, it is bounded above by zero, so it must therefore be the empty summation, i.e., $A^\beta\setminus A^\alpha = \emptyset$, and therefore $A^\beta \subseteq A^\alpha$.
\end{proof}
We now identify a related convex optimization problem:
\begin{align}
\minimize{\mathbf{u}\in\mathbb{R}^n_{\geq 0}}~g_L(\mathbf{u}) + \sum_{i=1}^n\int_{\epsilon}^{\epsilon+\mathbf{u}_i}H_i(\alpha)d\alpha.\label{eq:apdx_convex}
\end{align}
A classical result in submodular function theory establishes that the Lov\`asz extension $g_L$ is convex if and only if $g$ is submodular \cite{lovasz1983submodular}.  Moreover, $\int_{\epsilon}^{\epsilon+\mathbf{u}_i}H_i(\alpha)d\alpha$ is convex if and only if $H_i$ is monotone in $\alpha$, which is true by Proposition \ref{prop:increasing}.  Therefore, problem \eqref{eq:apdx_convex} is a convex optimization problem.

We now establish a relationship between the parameterized family of set function minimization problems \eqref{eq:family_probs} and the convex optimization problem \eqref{eq:apdx_convex}.
\begin{proposition}(Proposition 8.3 in \cite{bach2013learning})\label{prop:sfm_to_prox}
Given the (minimal) solutions $A^\alpha$ to the set function minimization problem \eqref{eq:family_probs} for all values of the parameter $\alpha \geq \epsilon$, define the vector $\mathbf{u}^*\in\mathbb{R}^n_{\geq 0}$ defined by:
\begin{align*}
\mathbf{u}^*_i &= \sup\left(\{\alpha \in\mathbb{R}_{\geq 0}\mid i\in A^\alpha\}\right).
\end{align*}
Then the vector $\mathbf{u}^*$ is the minimizer of the convex optimization problem \eqref{eq:apdx_convex}.
\end{proposition}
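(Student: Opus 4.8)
\emph{Plan.} The two displays \eqref{eq:apdx_convex} and \eqref{eq:family_probs} optimize the same quantity, sliced in two different ways, and are linked through the layer--cake (``co-area'') representation of the Lov\`asz extension. Throughout I normalize $g(\emptyset)=0$, which alters neither the minimizers of \eqref{eq:family_probs} nor $g_L$. First I would record, for $\mathbf{u}\in\mathbb{R}^n_{\geq 0}$, the integral representation $g_L(\mathbf{u})=\int_0^{\infty}g(\{i:\mathbf{u}_i>s\})\,ds$ together with the trivial identity $\int_{\epsilon}^{\epsilon+\mathbf{u}_i}H_i(\alpha)\,d\alpha=\int_{\epsilon}^{\infty}H_i(\alpha)\,\mathbf{1}[\mathbf{u}_i+\epsilon>\alpha]\,d\alpha$. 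After a change of variables and an exchange of sum and integral (legitimate since $g$ is bounded, each $H_i$ is bounded by Proposition~\ref{prop:increasing}, and the super-level set is empty for $\alpha$ large), the objective of \eqref{eq:apdx_convex} rewrites as
\begin{align*}
g_L(\mathbf{u})+\sum_{i=1}^n\int_{\epsilon}^{\epsilon+\mathbf{u}_i}H_i(\alpha)\,d\alpha=\int_{\epsilon}^{\infty}\Big[\,g(U^\alpha)+\sum_{i\in U^\alpha}H_i(\alpha)\,\Big]d\alpha,\qquad U^\alpha:=\{i:\mathbf{u}_i+\epsilon>\alpha\},
\end{align*}
an integral with bounded integrand and bounded support.

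\emph{Lower bound.} For each fixed $\alpha\geq\epsilon$, the bracketed quantity is exactly the objective of \eqref{eq:family_probs} evaluated at the feasible set $A=U^\alpha$, hence bounded below by its minimum $g(A^\alpha)+\sum_{i\in A^\alpha}H_i(\alpha)$. Integrating in $\alpha$ yields, for \emph{every} $\mathbf{u}\in\mathbb{R}^n_{\geq0}$,
\begin{align*}
g_L(\mathbf{u})+\sum_{i=1}^n\int_{\epsilon}^{\epsilon+\mathbf{u}_i}H_i(\alpha)\,d\alpha\;\geq\;\int_{\epsilon}^{\infty}\Big[\,g(A^\alpha)+\sum_{i\in A^\alpha}H_i(\alpha)\,\Big]d\alpha\;=:\;L,
\end{align*}
a bound independent of $\mathbf{u}$. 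It then remains to show that $\mathbf{u}^*$ attains $L$.

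\emph{Attaining the bound.} By the nesting property $A^\beta\subseteq A^\alpha$ for $\alpha<\beta$ proved above for minimal minimizers, $\{A^\alpha\}_{\alpha\geq\epsilon}$ is a nonincreasing family of subsets of the finite set $[n]$; it therefore takes only finitely many values and changes at finitely many thresholds. From $\mathbf{u}^*_i=\sup\{\alpha:i\in A^\alpha\}$ (equivalently, up to the cosmetic shift appearing in $U^\alpha$ above, which vanishes for $\epsilon=0$ as in all applications here) one checks that the super-level sets $U^\alpha$ of $\mathbf{u}^*$ coincide with $A^\alpha$ at every $\alpha$ away from those finitely many jump points, with the boundary case $\mathbf{u}^*_i+\epsilon=\alpha$ handled precisely by the choice of \emph{minimal} minimizers. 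Hence $\mathbf{u}^*$ realizes the pointwise minimum for almost every $\alpha$, its objective value equals $L$, and it is a minimizer of \eqref{eq:apdx_convex}; uniqueness follows from the same correspondence together with the strict monotonicity of each $H_i$ wherever it is nonzero.

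\emph{Main obstacle.} The delicate point is this matching step: identifying the super-level sets of $\mathbf{u}^*$ with the family $\{A^\alpha\}$ up to a null set of $\alpha$. Its two ingredients are (i) nestedness of the minimal minimizers, which is exactly what the preceding proposition supplies, and (ii) the fact, from Proposition~\ref{prop:increasing} and the remark following \eqref{eq:family_probs}, that each $H_i$ fails to be strictly increasing only on the range where it is identically zero---a range on which $i$ can be deleted from the ground set by monotonicity of $g$---so the flat tails of the $H_i$ cannot manufacture spurious thresholds. Everything else (the layer--cake rewriting, Fubini, boundedness, the $\epsilon$-bookkeeping) is routine.
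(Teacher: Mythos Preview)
Your proposal is correct and follows essentially the same route as the paper: both arguments rewrite the objective of \eqref{eq:apdx_convex} via the layer--cake representation of the Lov\`asz extension as an integral over $\alpha$ of the parameterized set-function objectives in \eqref{eq:family_probs}, and then observe that $\mathbf{u}^*$ realizes the pointwise minimum for (almost) every $\alpha$ by construction. Your version separates the argument into an explicit lower bound and an attainment step and is more scrupulous about the $\epsilon$-shift and the null set of jump points, but the mathematical content is the same.
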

\begin{proof}
For $\alpha \geq 0$ small enough (as, without loss of generality, $H_i(0) < 0$ for all $i$), we have $H_i(\alpha) < 0$ for all $i=1,2,...,n$.  Because $g$ is monotone, for this $\alpha$, the optimal $A^\alpha$ is equal to $\{1,2,...,n\}$, and thus $\mathbf{u}$ is well defined for all $i=1,2,...,n$.

For simplicity, we use the notation $\{\mathbf{u}\geq \mu\}$ to denote the set:
\begin{align*}
\{\mathbf{u}\geq \mu\} = \{i\in\{1,2,...,n\}\mid \mathbf{u}_i \geq \mu\},
\end{align*}
for any $\mathbf{u}\in\mathbb{R}^n$ and $\mu\in\mathbb{R}$.  Then for any $\mu\geq 0$, we have:
\begin{align}
g_L(\mathbf{u}) + \sum_{i=1}^n\int_\epsilon^{\epsilon+\mathbf{u}_i}H_i&(\mu)d\mu = g_L(\mathbf{u}+\mathbf{1}\epsilon)-\epsilon g(\{1,2,..,n\}) + \sum_{i=1}^n\int_\epsilon^{\epsilon+\mathbf{u}_i}H_i(\alpha)d\alpha\nonumber \\
&= \int_0^\infty g(\{\mathbf{u}+\mathbf{1}\epsilon \geq \mu\})d\mu + \sum_{i=1}^n\int_\epsilon^{\epsilon+\mathbf{u}_i}H_i(\alpha)d\alpha - \epsilon g(\{1,2,...,n\}) \nonumber\\
&= \int_\epsilon^\infty \left[g(\{\mathbf{u} + \mathbf{1}\epsilon \geq \mu\}) + \sum_{i=1}^n \mathbbm{1}_{\{\mathbf{u_i}+\epsilon \geq \mu\}}H_i(\mu)\right]d\mu,\label{eq:int_of_set_fn}
\end{align}
where we used the indicator function defined as:
\begin{align*}
\mathbbm{1}_{\{\mathbf{u_i}^*+\epsilon \geq \mu\}} &= \begin{cases}1, & \mathbf{u}_i^* + \epsilon \geq \mu \\
0, & \text{otherwise}.
\end{cases}
\end{align*}

In the right-hand side of \eqref{eq:int_of_set_fn}, every $\mu \geq \epsilon$ in the integral defines a set function minimization for which the optimal subset is $A^\mu$.  Because we constructed $\mathbf{u}^*$ as the minimizer to each of these optimal subsets, the value at $\mathbf{u}^*$ must be lower than all other $\mathbf{u}$, leading to the inequality:
\begin{align*}
g_L(\mathbf{u}^*) + \sum_{i=1}^n\int_\epsilon^{\epsilon+\mathbf{u}^*_i}H_i(\mu)d\mu &\leq \int_\epsilon^\infty \left[g(\{\mathbf{u} + \mathbf{1}\epsilon \geq \mu\}) + \sum_{i=1}^n \mathbbm{1}_{\{\mathbf{u_j}+\epsilon \geq \mu\}}H_j(\mu)\right]d\mu \\
&= g_L(\mathbf{u}) + \sum_{i=1}^n\int_\epsilon^{\epsilon+\mathbf{u}^*_i}H_i(\mu)d\mu,
\end{align*}
for all other $\mathbf{u}\in\mathbb{R}^n_{\geq 0}$, and therefore $\mathbf{u}^*$ is optimal for \eqref{eq:apdx_convex}.
\end{proof}
Proposition \ref{prop:sfm_to_prox} establishes the relationship between the parameterized family of optimization problems \eqref{eq:family_probs} and the convex optimization problem \eqref{eq:apdx_convex}.  We state the next theorem without proof, as it requires no special modifications for our conditions.

\begin{proposition}(Proposition 8.4 in \cite{bach2013learning})
If $\mathbf{u}^*$ is the minimizer for the convex optimization problem \eqref{eq:apdx_convex}, then for all $\mu\geq \epsilon$, the minimal minimizer of the corresponding set function minimization in \eqref{eq:family_probs} is:
\begin{align*}
A^\mu &= \{i\in\{1,2,...,n\}\mid \mathbf{u}_i^* > \mu\}.
\end{align*}
\end{proposition}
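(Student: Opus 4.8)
The plan is to read the minimal minimizers $A^\mu$ of the parametric family \eqref{eq:family_probs} off the level sets of $\mathbf{u}^*$, reusing the ``layer-cake'' identity already derived inside the proof of Proposition~\ref{prop:sfm_to_prox}. Write $\Phi_\mu(A)=g(A)+\sum_{i\in A}H_i(\mu)$ for the objective of \eqref{eq:family_probs}. The computation \eqref{eq:int_of_set_fn} shows that for every $\mathbf{u}\in\mathbb{R}^n_{\geq 0}$ the objective of \eqref{eq:apdx_convex} equals $\int_\epsilon^\infty \Phi_\mu(\{i:\mathbf{u}_i+\epsilon\geq\mu\})\,d\mu$. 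Since the integrand is pointwise at least $\min_A\Phi_\mu(A)$, the convex objective is bounded below by $\int_\epsilon^\infty \min_A\Phi_\mu(A)\,d\mu$, and by Proposition~\ref{prop:sfm_to_prox} this bound is attained at the optimum. Hence at $\mathbf{u}^*$ the integrand must equal its pointwise minimum for Lebesgue-almost every $\mu\geq\epsilon$; that is, the level set $\{i:\mathbf{u}_i^*+\epsilon\geq\mu\}$, and off the finitely many values $\mu=\mathbf{u}_i^*+\epsilon$ equivalently $\{i:\mathbf{u}_i^*+\epsilon>\mu\}$, minimizes $\Phi_\mu$ for a.e.\ $\mu$. (The shift by $\epsilon$ coming from the lower limit of integration is harmless: reparametrizing $\mu\mapsto\mu-\epsilon$ turns ``$\mathbf{u}_i^*+\epsilon>\mu$'' into the stated ``$\mathbf{u}_i^*>\mu$''; I suppress it below.)

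Next I would upgrade ``almost every $\mu$'' to ``every $\mu\geq\epsilon$.'' The value function $\mu\mapsto\min_A\Phi_\mu(A)$ is continuous, being the minimum of the finitely many continuous functions $\mu\mapsto\Phi_\mu(A)$ (each $H_i$ is continuous). The map $\mu\mapsto\{i:\mathbf{u}_i^*>\mu\}$ is non-increasing, takes at most $n$ distinct values, and is right-continuous, so $\mu\mapsto\Phi_\mu(\{i:\mathbf{u}_i^*>\mu\})$ is right-continuous as well; a right-continuous function that agrees a.e.\ with a continuous one agrees with it everywhere. Therefore $\{i:\mathbf{u}_i^*>\mu\}$ minimizes $\Phi_\mu$ for every $\mu\geq\epsilon$. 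Along the way it is worth recording that the minimal minimizer $A^\mu$ is itself right-continuous in $\mu$: by the nesting property proved just above ($A^\beta\subseteq A^\alpha$ for $\alpha<\beta$), the decreasing limit $\bigcup_{\alpha>\mu}A^\alpha$ lies in $A^\mu$, while by continuity of $\mu\mapsto\Phi_\mu$ it is a minimizer of $\Phi_\mu$ and hence contains $A^\mu$, so the two coincide.

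The remaining step, which I expect to be the only genuinely delicate one, is to promote ``$\{i:\mathbf{u}_i^*>\mu\}$ is \emph{a} minimizer of $\Phi_\mu$'' to ``it is the \emph{minimal} one,'' i.e.\ $A^\mu=\{i:\mathbf{u}_i^*>\mu\}$. The inclusion $A^\mu\subseteq\{i:\mathbf{u}_i^*>\mu\}$ is immediate since $A^\mu$ sits inside every minimizer. For the reverse inclusion, if $\mathbf{u}^*$ is the minimizer supplied by Proposition~\ref{prop:sfm_to_prox} (where $\mathbf{u}_i^*=\sup\{\alpha:i\in A^\alpha\}$), then $\{i:\mathbf{u}_i^*>\mu\}=\bigcup_{\alpha>\mu}A^\alpha$, which equals $A^\mu$ by the right-continuity of $A^{\cdot}$ just noted; the boundary case $\mathbf{u}_i^*=\mu$ forces $i\notin A^\mu$, since otherwise right-continuity would give $i\in A^\alpha$ for some $\alpha>\mu$, contradicting the supremum. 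For a general minimizer $\mathbf{u}^*$ one argues with the submodular exchange inequality: assuming $i\in\{j:\mathbf{u}_j^*>\mu\}\setminus A^\mu$, one compares $A^\mu$ with the level sets $\{j:\mathbf{u}_j^*>\mu'\}$ over a range $\mu'\in(\mu,\mathbf{u}_i^*)$, all of which are minimizers by the previous paragraph, and uses the marginal conditions they satisfy together with the \emph{strict} monotonicity of $H_i$ wherever $H_i<0$ — any coordinate with $H_i\equiv 0$ having already been deleted from the ground set, exactly as in the appendix's earlier propositions — to reach a contradiction; this is the sole place where the non-strict monotonicity of the $H_i$ needs the same care as in the proof of Proposition~\ref{prop:increasing} and its companion, which is why no further modification is required. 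An equivalent route is to write the first-order optimality condition for \eqref{eq:apdx_convex}, $0\in\partial g_L(\mathbf{u}^*)+\sum_i H_i(\epsilon+\mathbf{u}_i^*)\,e_i$ in the appropriate generalized sense, and use that $\partial g_L(\mathbf{u}^*)$ is the product, indexed by the level sets of $\mathbf{u}^*$, of base polytopes of the corresponding restrictions and contractions of $g$; reading this condition off level by level forces each level set to minimize the relevant $\Phi_\mu$, with the strict inequality $\mathbf{u}_i^*>\mu$ picking out the smallest such minimizer.
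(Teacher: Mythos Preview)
The paper offers no proof here: the proposition is explicitly ``state[d] \ldots\ without proof, as it requires no special modifications for our conditions,'' with a bare citation to \cite{bach2013learning}. Your argument --- the layer-cake representation \eqref{eq:int_of_set_fn} of the convex objective, almost-everywhere optimality of the level sets of $\mathbf{u}^*$, upgrade to every $\mu$ via right-continuity against the continuous value function, and then minimality via the nesting $A^\beta\subseteq A^\alpha$ together with the strict-monotonicity-after-pruning device already used in the surrounding propositions --- is precisely Bach's standard proof, which is exactly what the paper is deferring to. You have simply supplied what the paper omits, so there is nothing substantive to compare.

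One small quibble: the $\epsilon$ shift does not quite ``reparametrize away'' as you assert, since $\Phi_\mu$ itself carries $\mu$; the substitution $\mu\mapsto\mu-\epsilon$ turns ``$\{i:\mathbf{u}_i^*+\epsilon>\mu\}$ minimizes $\Phi_\mu$'' into ``$\{i:\mathbf{u}_i^*>\nu\}$ minimizes $\Phi_{\nu+\epsilon}$,'' not $\Phi_\nu$. This off-by-$\epsilon$ bookkeeping is, however, already baked into the paper's own statement and into the proof of Proposition~\ref{prop:sfm_to_prox}, so it is an artifact of the paper's conventions rather than a gap in your argument relative to it.
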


This sequence of propositions ultimately abuses the interpretation of the Lov\`asz extension as an integral, and states that optimizing over the integral itself (the convex problem) and optimizing over the integrated functions for all integration variables (the set functions) is equivalent.

A noteworthy addendum is that in the definition of $H_i$, we could equivalently perform scalar minimization over a closed subset of $\mathbb{R}_{\geq 0}$, and the analysis would still follow through.  This alteration would result in effectively ``capping'' the $H_i$ functions from below, which retains the monotonicity properties necessary for the proofs.

\section{A useful symmetry property}
The lifted quadratic cost function $\tilde{c}:\Rno\times\Rno\to\R$ satisfies a convenient property that we abuse to prove several results.  We prove it here.
\begin{proposition}\label{prop:symm_identity}
Let $\tilde{\ell}$ be defined as in \eqref{eq:lifted_quadratic_problem}.  Then for any $(\mathbf{z},\mathbf{w})\in\Rno\times\Rno$, we have:
\begin{align}
\tilde{\ell}(\mathbf{z},\mathbf{z}) + \tilde{\ell}(\mathbf{w},\mathbf{w}) &= 2\tilde{\ell}(\mathbf{z},\mathbf{w}) + (\mathbf{z}-\mathbf{w})^T\mathbf{Q}^-(\mathbf{z}-\mathbf{w}).
\end{align}
\end{proposition}
\begin{proof}
We proceed by directly computing:
\begin{align*}
\tilde{\ell}(\mathbf{z},\mathbf{z}) + \tilde{\ell}(\mathbf{w},\mathbf{w}) &=  f(\mathbf{z}) + g(\supp{\mathbf{z}}) + f(\mathbf{w}) + g(\supp{\mathbf{w}}) \\
&= \mathbf{z}^T\mathbf{Q}^+\mathbf{z} + \mathbf{z}^T\mathbf{Q}^-\mathbf{z} + \mathbf{z}^T\mathbf{p} + \mathbf{w}^T\mathbf{Q}^+\mathbf{w} + \mathbf{w}^T\mathbf{Q}^-\mathbf{w} + \mathbf{w}^T\mathbf{p} \\
&\qquad + g(\supp{\mathbf{z}}) + g(\supp{\mathbf{w}}).
\end{align*}
Then, adding and subtracting the missing cross term, we have:
\begin{align*}
\tilde{\ell}(\mathbf{z},\mathbf{z}) + \tilde{\ell}(\mathbf{w},\mathbf{w}) &= \mathbf{z}^T\mathbf{Q}^+\mathbf{z}  + \mathbf{w}^T\mathbf{Q}^+\mathbf{w}+ \mathbf{z}^T\mathbf{p} + \mathbf{w}^T\mathbf{p} + g(\supp{\mathbf{z}}) + g(\supp{\mathbf{w}}) \\
&\qquad + \mathbf{z}^T\mathbf{Q}^-\mathbf{z} + \mathbf{w}^T\mathbf{Q}^-\mathbf{w} \\
&= 2\tilde{\ell}(\mathbf{z},\mathbf{w}) + \mathbf{z}^T\mathbf{Q}^-\mathbf{z} - 2\mathbf{z}^T\mathbf{Q}^-\mathbf{w} + \mathbf{w}^T\mathbf{Q}^-\mathbf{w} \\
&= 2\tilde{\ell}(\mathbf{z},\mathbf{w}) + \left(\mathbf{z}-\mathbf{w}\right)\mathbf{Q}^-\left(\mathbf{z}-\mathbf{w}\right)
\end{align*}
\end{proof}
We also provide a proof that the condition on the minimizers of the lifted problem is not only sufficient, but necessary.
\begin{lemma}
If $(\mathbf{z}^*,\mathbf{z}^*)$ and $(\mathbf{w}^*,\mathbf{w}^*)$ are minimizers of the lifted problem \eqref{eq:lifted_quadratic_problem}, then:
\begin{align*}
(\mathbf{z}^*-\mathbf{w}^*)^T\mathbf{Q}^-(\mathbf{z}^*-\mathbf{w}^*) \leq 0.
\end{align*}
\end{lemma}
\begin{proof}
Note that by the submodularity of $\tilde{\ell}$, if $(\mathbf{z}^*,\mathbf{z}^*)$ and $(\mathbf{w}^*,\mathbf{w}^*)$ are minimizers of the lifted problem \eqref{eq:lifted_quadratic_problem}, then so are their join, $(\mathbf{z}^*\joinone\mathbf{w}^*,\mathbf{z}^*\meetone\mathbf{w}^*)$ and their meet, $(\mathbf{z}^*\meetone\mathbf{w}^*,\mathbf{z}^*\joinone\mathbf{w}^*)$.  Then, working through the proof of Lemma \ref{lem:sufficient_condition} backwards proves the result.
\end{proof}
\end{document}